\theoremstyle{definition}
\newtheorem{theorem}{Theorem}[section]
\newtheorem{prop}[theorem]{Proposition}
\newtheorem{lemma}[theorem]{Lemma}
\newtheorem{cor}[theorem]{Corollary}
\newtheorem{defn}[theorem]{Definition}
\newtheorem{rmk}[theorem]{Remark}
\newtheorem{exam}[theorem]{Example}
\newtheorem{thm}[theorem]{Theorem}
\newtheorem{notation}[theorem]{Notation}
\newtheorem{subsec}[theorem]{}
\theoremstyle{plain}
\newtheorem*{thma}{Theorem A}
\newtheorem*{thmb}{Theorem B}
\newtheorem*{thmc}{Theorem C}
\theoremstyle{remark}
{\swapnumbers
   \newtheorem{ack}[theorem]{Acknowledgements} }
\newenvironment{myeq}[1][]
{\stepcounter{theorem}\begin{equation}\tag{\thetheorem}{#1}}
{\end{equation}}
\newenvironment{mysubsection}[2][]
{\begin{subsec}\begin{upshape}\begin{bfseries}{#2.}
			\end{bfseries}{#1}}
		{\end{upshape}\end{subsec}}
\newcommand{\C}{{\mathbb C}}
\newcommand{\Hyp}{{\mathbb H}}
\newcommand{\Z}{{\mathbb{Z}}}
\newcommand{\cO}{\mathbb O}
\newcommand{\Fib}{\mbox{Fib}}
\newcommand{\Top}{{\mathcal{T}op}}
\newcommand{\F}{\mathbb F}
\newcommand\DD{{\mathcal D}}
\newcommand\FF{{\mathcal F}}
\newcommand\GG{{\mathcal G}}
\newcommand\LL{{\mathcal L}}
\newcommand\MM{{\mathcal M}}
\newcommand\PP{{\mathcal P}}
\newcommand\RR{{\mathcal R}}
\newcommand\PMF{{\PP\kern-2pt\MM\FF}}
\newcommand\PML{{\PP\kern-2pt\MM\LL}}
\newcommand{\fsubd}{\mathrel{{\scriptstyle\searrow}\kern-1ex^d\kern0.5ex}}
\newcommand{\bsubd}{\mathrel{{\scriptstyle\swarrow}\kern-1.6ex^d\kern0.8ex}}
\newcommand{\fsubeq}{\mathrel{\raise-.7ex\hbox{$\overset{\searrow}{=}$}}}
\newcommand{\bsubeq}{\mathrel{\raise-.7ex\hbox{$\overset{\swarrow}{=}$}}}
\newcommand{\tsh}[1]{\left\{\kern-.9ex\left\{#1\right\}\kern-.9ex\right\}}
\renewcommand{\Im}{\operatorname{Im}}
\newcommand{\Ker}{\mbox{Ker}}
\newcommand{\Rank}{\mbox{Rank}}
\newcommand{\inter}{\mbox{int}}
\title{Sphere fibrations over highly connected manifolds}
\author{Samik Basu}
\address{Stat-Math Unit, Indian Statistical Institute, Kolkata 700108, India.}
\email{samik.basu2@gmail.com, samikbasu@isical.ac.in}
\author{Aloke Kr Ghosh}
\address{Stat-Math Unit, Indian Statistical Institute, Kolkata 700108, India.}
\email{alokekrghosh005@gmail.com}
\subjclass[2020]{Primary: 55R25, 57P10; Secondary: 57R19, 55P35.}
\keywords{Poincar\'{e} duality complexes, homotopy groups, sphere fibrations, loop spaces, quadratic associative algebra, quadratic lie algebra.}
\begin{document}

\begin{abstract}
 We construct sphere fibrations over $(n-1)$-connected $2n$-manifolds such that the total space is a connected sum of sphere products. More precisely, for $n$ even, we construct fibrations $S^{n-1} \to \#^{k-1}(S^n \times S^{2n-1}) \to M_k$, where $M_k$ is a $(n-1)$-connected $2n$-dimensional Poincar\'{e} duality complex which satisfies $H_n(M_k)\cong \Z^k$, in a localized category of spaces. The construction of the fibration is proved for $k\geq 2$, where the prime $2$, and the primes which occur as torsion in $\pi_{2n-1}(S^n)$ are inverted. In specific cases, by either assuming $n$ is small, or assuming $k$ is large we can reduce the number of primes that need to be inverted. Integral results are obtained for $n=2$ or $4$, and if $k$ is bigger than the number of cyclic summands in the stable stem $\pi_{n-1}^s$, we obtain results after inverting $2$. Finally, we prove some applications for fibrations over $N\# M_k$, and for looped configuration spaces.  
\end{abstract}

\maketitle

\section{Introduction}
From the point of view of classification problems in differential topology, apart from surfaces and $3$-manifolds, the first  systematic results were proved for spheres in the celebrated works of Milnor \cite{Mil56} and Kervaire and Milnor \cite{KeMi63}. In  dimension $4$, classification problems have received a lot of attention from both topologists and geometers. The determination of simply connected $4$-manifolds up to homotopy goes back to the early works of Whitehead \cite{Whi49} and Milnor \cite{Mil58}.  In this case, the simply connected  hypothesis on the $4$-manifold $M$ determines the homology groups up to an integer $k$, given by $H_2(M)\cong \Z^k$, and the homotopy type up to the classification of inner product spaces of rank $k$, given by the intersection form. Conversely, given a non-singular inner product space, the associated cell complex does satisfy Poincar\'{e} duality. However, not all of them are homotopy equivalent to smooth manifolds due to the restrictions proved by Rohlin \cite{Roh52} and Donaldson \cite{Don83}. On the other hand, the topological classification problem for simply connected $4$-manifolds solved by Freedman \cite{Fre82}, does not carry the same restrictions.

A natural generalization of the simply connected $4$-manifolds are the $(n-1)$-connected $2n$-manifolds. For these manifolds, the homology is again determined up to an integer $k$ by $H_n(M)\cong \Z^k$. Their classification have been studied by Wall \cite{Wal62} via the approach of expressing these as a union of handlebodies. The intersection form is no longer sufficient to determine the homotopy type of $M$. Such an $M$ with $H_n(M)\cong \Z^k$ possesses a minimal CW complex structure 
\[ M \simeq (S^n)^{\vee k} \cup_{L(M)} \DD^{2n}, \mbox{ with } L(M)\in \pi_{2n-1}\big( (S^n)^{\vee k}\big).\]
The homotopy group $\pi_{2n-1}\big((S^{n})^{\vee k}\big)$ is computed via the Hilton-Milnor theorem \cite{Hil55} as
\[\pi_{2n-1}\big((S^{n})^{\vee k}\big) \cong  (\pi_{2n-1}(S^n))^{\oplus k} \oplus (\pi_{2n-1} S^{2n-1})^{\oplus \binom{k}{2}}.\] 
The groups $\pi_{2n-1}(S^{2n-1})\cong \Z$ occuring in the above description are mapped to $(S^n)^{\vee k}$ via Whitehead products of the different summands. Moreover, if $n$ is even, the group $\pi_{2n-1}S^n$ contains a $\Z$-summand whose generator may be chosen either as the Whitehead product or the Hopf invariant one classes (which occur only when $n=2$, $4$, or $8$ \cite{Ada60}). The projection of $L(M)$ onto these torsion-free summands are determined directly by the intersection form. 

It is also an interesting question whether given $L(M)\in \pi_{2n-1}\big((S^{n})^{\vee k}\big)$, there is a $(n-1)$-connected $2n$-manifold homotopy equivalent to the cell complex $M$. In this paper, we work around these issues by considering all such cell complexes $M$. These satisfy Poincar\'{e} duality in the sense that there is a degree $2n$ homology class $[M]$ which gives the Poincar\'{e} duality isomorphism via the cap product, and are called Poincar\'{e} duality complexes \cite{Wal67}. We write $\PP \DD_k^m$ for the collection of Poincar\'{e} duality complexes that are $k$-connected and $m$-dimensional. In this notation, the above examples lie in $\PP \DD_{n-1}^{2n}$. 

The expression for $L(M)\in \pi_{2n-1}\big((S^{n})^{\vee k}\big)$ shows that a general homotopical classification will rely on the knowledge of $\pi_{2n-1}S^n$, and thus, is not possible with our current knowledge of the homotopy groups of spheres. As a weaker classification, we consider the homotopy type of the loop space $\Omega M$. If $k=\Rank(H_n(M))\geq 2$, one realizes that the homotopy type of $\Omega M$ depends only on $k$ \cite{BeTh14,BaBa18}. One proves that the loop space is expressable as a weak product of the loop space of spheres which map to $\pi_\ast M$ via Whitehead products. If $k=1$, this is not true as is observed in \cite[\S 4.3]{BaBa18}. 

The splitting results for the loop space of manifolds fall under the general framework of loop space decompositions. Such a decomposition for highly connected manifolds was first proved in \cite{BeWu15} for the $(n-1)$-connected $(2n+1)$-manifolds. There have been a growing interest in results of this type \cite{BeTh14, BaBa18, BaBa19, Bas19, Th22_unpub, HuTh22_unpub}. A general idea for producing loop space decompositions is given in \cite{Th22_unpub}. Given a cofibration sequence $\Sigma A \to E \stackrel{h}{\to} J$ for which $\Omega h$ has a right homotopy inverse, there are equivalences 
\[ \Omega E \simeq \Omega J \times \Omega \Fib(h), ~ \Fib(h)\simeq \Sigma A \rtimes \Omega J,\] 
where $\Fib(h)$ is the homotopy fibre of $h$. While this technique may be applied in many examples, it is not very useful when the rank of the homology of $E$ is small. 

A different view of the loop space decompositions is given by fibre bundles. For example, in the case of $\C P^2$, the usual quotient is part of the principal bundle $S^1 \to S^5 \to \C P^2$ which yields the loop space decomposition $\Omega \C P^2 \simeq S^1 \times \Omega S^5$. Simply connected $4$-manifolds also support principal $S^1$-bundles of the form $S^1 \to \#^{k-1}(S^2 \times S^3) \to M$ where $\Rank(H_2(M))=k\geq 2$ \cite{DuLi05, BaBa15}. The construction of such bundles have many geometric consequences. In the context of loop space decompositions, this implies $\Omega M \simeq S^1 \times \Omega (\#^{k-1}(S^2\times S^3))$. The construction involves a choice of a primitive class in $H^2(M)\cong [M,\C P^\infty]$ using the fact that $\C P^\infty$ is the classifying space for $S^1$-bundles, and the classification of spin $5$-manifolds by Smale \cite{Sma62}. In this paper, we search for generalizations of this construction for highly connected manifolds. 

Let $M_k \in \PP \DD_{n-1}^{2n}$ be a Poincar\'{e} duality complex of dimension $2n$ which is $(n-1)$-connected and $\Rank(H_n(M_k))=k$. Let $E_k=\#^{k-1}(S^n\times S^{2n-1})$. We first observe that the existence of a fibration 
\[ S^{n-1} \to E_k \to M_k\]
puts some restrictions on $n$. As $E_k$ is $(n-1)$-connected, we must have that the map $S^{n-1} \to E_k$ is null-homotopic. Now continuing the homotopy fibration sequence further, we find that $\Omega E_k \to \Omega M_k \to S^{n-1}$ is a principal fibration with a section and so, there is a splitting $\Omega M_k \simeq \Omega E_k \times S^{n-1}$. Therefore, the map $S^{n-1} \stackrel{u}{\to} \Omega M_k$ which is adjoint to a map $S^n \stackrel{\tilde{u}}{\to} M_k$, constructs a summand of $\pi_\ast M_k$ via 
\[ \pi_r(S^{n-1}) \stackrel{E}{\to} \pi_{r+1} S^n \stackrel{\tilde{u}}{\to} \pi_{r+1}(M_k).\] 
This means that $E: \pi_r S^{n-1} \to \pi_{r+1}S^n$ is injective, which does not usually happen. We assume that $n$ is even, and either $n=2$, $4$, or $8$, or that we are working in the category $\Top_{1/2}$, which is the localized category of spaces after inverting $2$. This hypothesis implies that $\Omega S^n \simeq S^{n-1} \times \Omega S^{2n-1}$, so that $E$ is the inclusion of a summand. 

We first notice that the classification results of Smale \cite{Sma62} and Barden \cite{Bar65} for $5$-manifolds are not available in general. Additionally, the spheres are not loop spaces (other than $n=2$, or $4$), so it is not possible to obtain principal fibrations in general. We approach this problem from a homotopy theoretic point of view. The homology of the loop space is an associative algebra via the Pontrjagin product, and for both $M_k$ and $E_k$, $H_\ast \Omega M_k$ and $H_\ast \Omega E_k$ may be computed as tensor algebras modulo a single relation \cite{BeBo15}. It is then possible to produce a map between the associative algebras $H_\ast(\Omega E_k) \to H_\ast(\Omega M_k)$. Now the results of \cite{BaBa18} imply that both $\Omega E_k$ and $\Omega M_k$ are a weak product of loop spaces on spheres, which enable us to construct a map $\Omega E_k \to \Omega M_k$ that realizes the above map on homology. 

The next step is to try to construct a delooping of the map $\Omega E_k \to \Omega M_k$. This is done via obstruction theory using the cell structure of $E_k$ as 
\[ E_k \simeq \big( (S^n)^{\vee k-1} \vee (S^{2n-1})^{\vee k-1}\big) \cup_\phi \DD^{3n-1},\]
where $\phi$ is a sum of Whitehead products. The map $\Omega E_k \to \Omega M_k$ already specifies choices for the map 
\[ (S^n)^{\vee k-1} \vee (S^{2n-1})^{\vee k-1} \to M_k.\]
Once the map $f: E_k \to M_k$ is appropriately constructed, a spectral sequence computation is used to show that $\Fib(f) \simeq S^{n-1}$. The first result that we prove is the following theorem (see Theorem \ref{sphfibloc}) 
\begin{thma}
For $k\geq 2$, let $M_k\in \PP \DD_{n-1}^{2n}$  with $n$ even and $H_n(M;\Z) \cong \Z^k$. After inverting the primes $2$ and those which occur as torsion in $\pi_{2n-1}(S^n)$, there is a  fibration $S^{n-1}\to E_k\to  M_k$ such that $E_k$ is homotopy equivalent to $\#^{k-1} (S^{n}\times S^{2n-1})$. 
\end{thma}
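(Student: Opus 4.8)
The plan is to follow the outline sketched in the introduction, working in the localized category where the primes $2$ and the torsion primes of $\pi_{2n-1}(S^n)$ are inverted, so that $\Omega S^n \simeq S^{n-1}\times \Omega S^{2n-1}$ and all the homotopy groups in play become modules over this localized ring. First I would record the minimal CW structures
\[ M_k \simeq (S^n)^{\vee k}\cup_{L(M_k)}\DD^{2n}, \qquad E_k \simeq \big((S^n)^{\vee k-1}\vee (S^{2n-1})^{\vee k-1}\big)\cup_\phi \DD^{3n-1}, \]
with $\phi$ a sum of Whitehead products, and then invoke the loop space decomposition results of \cite{BaBa18}: since $k\geq 2$, both $\Omega M_k$ and $\Omega E_k$ split as weak infinite products of loop spaces of spheres, and by \cite{BeBo15} their Pontrjagin rings $H_*(\Omega M_k)$ and $H_*(\Omega E_k)$ are tensor algebras modulo a single quadratic relation determined by the intersection form. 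The first concrete step is to write down an explicit homomorphism of associative algebras $H_*(\Omega E_k)\to H_*(\Omega M_k)$ compatible with these presentations — sending the $k-1$ generators in degree $n-1$ to an appropriate sublattice of the $n-1$-dimensional generators of $H_*(\Omega M_k)$ dictated by diagonalizing (a hyperbolic summand of) the intersection form, and the $k-1$ generators in degree $2n-2$ to the corresponding products — and to check that the single relation is respected.

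**Realizing the algebra map by a space-level map.** The second step is to promote this algebra map to an actual map $\Omega E_k \to \Omega M_k$. Here I would use that both sides are weak products of loop spaces on spheres (from \cite{BaBa18}); a map out of $\Omega S^m$ is, by adjunction and the James splitting, controlled by where the bottom cell goes, so one builds the map factor-by-factor on the weak product, checking that the resulting map induces the prescribed homomorphism on $H_*$. This uses that, after localization, the relevant Whitehead products and their iterates behave like free generators, so there is no obstruction to choosing the images coherently.

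**Delooping and identifying the fibre.** The third and crucial step is to deloop $\Omega E_k \to \Omega M_k$ to a map $f\colon E_k \to M_k$. I would do this cell by cell on $E_k$ using its CW structure above: the adjoint of $\Omega E_k\to\Omega M_k$ restricted to the wedge $(S^n)^{\vee k-1}\vee (S^{2n-1})^{\vee k-1}$ gives a map on the $(3n-2)$-skeleton, and the obstruction to extending over the top cell $\DD^{3n-1}$ lies in $\pi_{3n-2}(M_k)$ applied to the attaching class; one shows this obstruction vanishes after the chosen localization (this is where inverting the torsion primes of $\pi_{2n-1}(S^n)$, hence of the relevant higher homotopy, is used), perhaps after modifying the map on lower skeleta. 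Once $f\colon E_k\to M_k$ is in hand inducing the algebra map on $H_*(\Omega-)$, the final step is a Serre spectral sequence computation for the homotopy fibration $\Fib(f)\to E_k \to M_k$: comparing $H_*(\Omega M_k)$-module structures and using that $\Omega f$ is surjective on homology with a compatible splitting, one identifies $H_*(\Fib(f))$ with $H_*(S^{n-1})$, and since everything is simply connected in the relevant range and the localized homology is that of a sphere, $\Fib(f)\simeq S^{n-1}$. I expect the delooping step — controlling the obstruction in $\pi_{3n-2}(M_k)$ to extending over the top cell of $E_k$, and arranging the map so that the fibre comes out as a sphere rather than merely something with the right homology — to be the main obstacle, and the precise set of primes that must be inverted is exactly what is needed to kill that obstruction.
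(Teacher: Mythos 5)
Your outline is correct at the level of strategy---construct $f\colon E_k\to M_k$ by obstruction theory on the CW structure of $E_k$, then identify the fibre by a Serre spectral sequence---but it leaves the essential step unargued. You never specify what the images of the $\delta_i$ should be; the paper makes the concrete choice $\beta_i = \sum_{j<k} g_{i,j}[\alpha_j,\alpha_k] + \tfrac12 g_{i,k}[\alpha_k,\alpha_k]$ dictated by the inverse intersection form (this is the content of Proposition \ref{alglem}), and a Jacobi-identity computation then shows $\sum_i[\alpha_i,\beta_i] = [L(M_k),\alpha_k] + \tfrac12 g_{k,k}\bigl[[\alpha_k,\alpha_k],\alpha_k\bigr]$. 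The first term vanishes because $L(M_k)$ becomes null in $M_k$; the second is the actual obstruction. Saying ``one shows this obstruction vanishes after the chosen localization'' bypasses the argument that has to be supplied.

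More seriously, that residual term is $3$-torsion, and $3$ is \emph{not} among the primes inverted unless $3\in T_n$ (for instance $\pi_{11}(S^6)\cong\Z$, so $T_6=\{2\}$). Your proof as written only gives the weaker statement where $3$ is additionally inverted (Theorem \ref{sphfibloc3}); to reach the stated Theorem A one must instead arrange the basis so that $3\mid g_{k,k}$. The paper does this via a lemma on symmetric bilinear forms over $\F_3$ (Lemma \ref{quadform3}) when $k\geq 3$, together with a separate case analysis using the rank-two classification of unimodular forms and the Hopf-invariant-one theorem when $k=2$ (Example \ref{sphk=2}). That arithmetic input is indispensable and is absent from your proposal. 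A minor point: the intermediate step of building $\Omega E_k\to\Omega M_k$ factor-by-factor and then delooping is redundant---one passes directly from the algebra map (Proposition \ref{algmap}) to a map on the $(3n-2)$-skeleton of $E_k$, and the only issue is the extension over the top cell.
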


Following this, we try for improved results which reduce the set of primes that are required to be inverted. The best case is when $n=2$, $4$, or $8$, which contains a Hopf invariant one class. In the case $n=2$, one already has a $S^1$-bundle over $M_k \in \PP \DD_1^4$ as stated above. We also point this out from our homotopy theoretic techniques without using the classification results of Smale. For $n=4$, we have $\pi_7(S^4) \cong \Z\oplus \Z/(12)$, so the primes that are required to be inverted in Theorem A are $2$, and $3$. We observe through direct computation (Example \ref{even2}) that there is no principal $S^3$-bundle over certain $M_2\in \PP\DD_3^8$ with total space $S^4\times S^7$. Here, the group structure on $S^3$ is by quaternionic multiplication identifying $S^3$ as the unit quaternions. 
However, we are able to prove integral versions of the sphere fibrations as stated in the following theorem.  (see Theorems \ref{sc4mfld}, \ref{sphfib4})
\begin{thmb}
a) Let $M_k$ be a simply connected $4$-manifold with $H_{2}M_k\cong \Z^k$. Then, there is a principal  $S^{1}$-fibration $S^{1}\to E_k \to M_k$ where $E_k\simeq \#^{k-1}(S^{2}\times S^{3}).$ \\
b) Let $M_k\in \PP \DD_3^8$, that is, $H_4(M_k)= \Z^k$ for $k\geq 2$. Such an $M_k$ supports a $S^3$-fibration $S^{3}\to E_k\to M_k$ with $E_k \simeq \#^{k-1}(S^4 \times S^7)$. 
\end{thmb}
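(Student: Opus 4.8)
The plan is to run, over $\Z$, the construction sketched in the introduction, specialised to $n\in\{2,4\}$. What makes it work integrally is that for these $n$ there is a Hopf invariant one class ($\eta\in\pi_3(S^2)$, $\nu\in\pi_7(S^4)$), so that $\Omega S^n\simeq S^{n-1}\times\Omega S^{2n-1}$ over $\Z$; consequently \cite{BeBo15} computes the Pontryagin rings integrally as $H_*(\Omega M_k;\Z)\cong T(x_1,\dots,x_k)/(q)$ with $|x_i|=n-1$ and $q$ the (unimodular) intersection form of $M_k$ — the Hopf class is exactly what allows $q$ to have odd diagonal entries, as for $\C P^2$ and $\mathbb{H}P^2$ — and $H_*(\Omega E_k;\Z)\cong T(u_1,\dots,u_{k-1},v_1,\dots,v_{k-1})/\big(\sum_i(u_iv_i-v_iu_i)\big)$ with $|u_i|=n-1$, $|v_i|=2n-2$. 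Here I use the cell structures $M_k\simeq(S^n)^{\vee k}\cup_L\DD^{2n}$ and $E_k\simeq\big((S^n)^{\vee k-1}\vee(S^{2n-1})^{\vee k-1}\big)\cup_\phi\DD^{3n-1}$, $\phi=\sum_i[\iota_{n,i},\iota_{2n-1,i}]$, recorded earlier. A simply connected $4$-manifold lies in $\PP\DD_1^4$, so part (a) is the case $n=2$; the case $k=1$ there is the Hopf fibration $S^1\to S^5\to\C P^2$, so I assume $k\ge2$ from now on.

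Next I would build a comparison of loop spaces. Choose a primitive class $w\in H_n(M_k)\cong\Z^k$ (automatically $q$-unimodular, $q$ being unimodular) and a basis in which the rank $k-1$ complement of $w$ carries the part of $q$ matching $\sum_i(u_iv_i-v_iu_i)$; this produces an algebra homomorphism $\theta\colon H_*(\Omega E_k;\Z)\to H_*(\Omega M_k;\Z)$ under which $H_*(\Omega M_k)$ becomes a free left $H_*(\Omega E_k)$-module on $\{1,w\}$, i.e.\ $H_*(\Omega M_k)\cong H_*(\Omega E_k)\otimes\Lambda(w)$ with $|w|=n-1$. Since both $\Omega M_k$ and $\Omega E_k$ are weak products of loop spaces of spheres, with factors indexed by the generators above \cite{BaBa18}, $\theta$ is realised by an honest map $g\colon\Omega E_k\to\Omega M_k$ which, for the right choice of $w$, is the inclusion of the $\Omega E_k$-factor in a splitting $\Omega M_k\simeq\Omega E_k\times S^{n-1}$. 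The remaining task — and \emph{the step I expect to be the main obstacle} — is to deloop $g$: it restricts on bottom cells to compatible maps $(S^n)^{\vee k-1}\vee(S^{2n-1})^{\vee k-1}\to M_k$, and one must show the composite $S^{3n-2}\xrightarrow{\phi}(S^n)^{\vee k-1}\vee(S^{2n-1})^{\vee k-1}\to M_k$ — a sum of Whitehead products in $M_k$, which by construction maps under $\theta$ to the defining relation of $H_*(\Omega E_k)$ — is null-homotopic, so that $g$ extends over $\DD^{3n-1}$ to $f\colon E_k\to M_k$ with $\Omega f\simeq g$ and $E_k$ literally $\#^{k-1}(S^n\times S^{2n-1})$. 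It is exactly here that the integral control afforded by the Hopf invariant one elements is used, and, for $n=4$, the $H$-space structure of $S^3$ to absorb the torsion $\Z/12\subset\pi_7(S^4)$ (with $\eta$ playing the analogous role when $n=2$).

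Finally I would identify the homotopy fibre. Since $g=\Omega f$ is the inclusion of a factor in $\Omega M_k\simeq\Omega E_k\times S^{n-1}$, its homotopy fibre is $\Omega S^{n-1}$, i.e.\ $\Omega\Fib(f)\simeq\Omega S^{n-1}$; as $\Fib(f)$ is $(n-2)$-connected, the loop homology (Serre) spectral sequence then forces $H_*(\Fib(f);\Z)\cong H_*(S^{n-1};\Z)$, whence $\Fib(f)\simeq S^{n-1}$ (for $n=2$, where $\Fib(f)$ is merely connected, one instead reads $\pi_*(\Fib(f))$ directly off $\Fib(f)\to E_k\to M_k$). This produces the fibration $S^{n-1}\to E_k\to M_k$ with $E_k\simeq\#^{k-1}(S^n\times S^{2n-1})$, which is part (b) (the case $n=4$) together with the fibration asserted in part (a) (the case $n=2$), the latter obtained without invoking Smale's classification of $5$-manifolds. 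For part (a) it then remains to upgrade this to a principal bundle: $M_k$ is simply connected, so the $S^1$-fibration just constructed is orientable, and an orientable $S^1$-fibration is fibre homotopy equivalent to a principal $S^1$-bundle — both being classified by maps to $BS^1\simeq\C P^\infty$ — and concretely this bundle is the pullback of the universal $S^1$-bundle along a primitive class in $H^2(M_k;\Z)$.
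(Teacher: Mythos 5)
Your framework---construct an algebra map $H_*\Omega E_k\to H_*\Omega M_k$, realize it on loop spaces, deloop by extending over the top cell of $E_k$, and then identify the fibre---is exactly the paper's framework, and your observation that the extension over the top cell is ``the main obstacle'' is precisely right. But that obstacle is where essentially all of \S\ref{lowdim}'s work lies, and you have not actually carried it out; you have only gestured at ``the integral control afforded by the Hopf invariant one elements'' and ``the $H$-space structure of $S^3$,'' neither of which, as stated, constitutes an argument.

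Moreover, the intuition about $S^3$ is misleading. The paper's Example~\ref{even2} shows that already for $k=2$ there exist $M_2\in\PP\DD_3^8$ with total space $S^4\times S^7$ for which \emph{no principal $S^3$-bundle exists}; the $S^3$-fibrations that the paper produces in Theorem~\ref{sphfib4} are not in general principal, so the quaternionic $H$-structure of $S^3$ is not the mechanism. What the paper actually does for $n=4$ is a delicate computation in $\pi_{10}(S^4)\cong\Z/24\{x\}\oplus\Z/3\{y\}$ using the explicit relations \eqref{htp10-4}: it distinguishes the even and odd intersection form cases (Propositions~\ref{4betaoddcase} and~\ref{4betaevencase}), chooses a basis achieving divisibility conditions such as $6\mid l_k$, or $4\nmid g_{k,k}$ and $3\mid l_k$, or $24\mid g_{k,k}$ and $l_k=0$ (Lemma~\ref{choicebas4} and the argument inside Theorem~\ref{sphfib4}), and then perturbs the naive $\beta_i$ by correction terms involving $\nu_i$, $\nu'_j$, and $[\alpha_i,\alpha_i]$ so that $\sum_i[\alpha_i,\beta_i]$ collapses to multiples of $[L(M_k),\alpha_k]$ and $L(M_k)\circ\nu_{(7)}$, which vanish in $\pi_*(M_k)$ because $L(M_k)$ does. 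Even for $n=2$ (Theorem~\ref{sc4mfld}) the argument is not automatic: one must choose $\alpha_k$ dual to $w_2(M_k)$ so that $g_{i,i}$ is even for $i<k$, and then use the Jacobi identity together with Hilton's formula for $[\alpha,[\alpha,\beta]]$ to see that the error term is $L(M_k)\circ\eta_{(3)}$. None of this is present in your proposal.

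Two smaller remarks. Your identification of the fibre assumes that $\Omega f$ is the inclusion of a factor in a splitting $\Omega M_k\simeq\Omega E_k\times S^{n-1}$, but such a splitting is a \emph{consequence} of the fibration's existence; the paper instead deduces $H_*\Fib(f)\cong H_*(S^{n-1})$ unconditionally from the Serre spectral sequence of $\Omega M_k\to\Fib(f)\to E_k$ (Proposition~\ref{spseqarg}), using only the algebraic facts established in Propositions~\ref{alglem}--\ref{alglem2}. On the other hand, your promotion of the $S^1$-fibration in part~(a) to a principal bundle via orientability and the equivalence $\mathrm{hAut}(S^1)\simeq\Z/2\times S^1$ is correct and a slightly cleaner route than the paper's direct comparison with the homotopy fibre of a map $M\to\C P^\infty$.
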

For $n=8$, we have $\pi_{15}(S^8)\cong \Z \oplus \Z/(120)$ and so the primes that are required to be inverted in Theorem A are $2$, $3$, and $5$. Through direct computations, we observe that even for $\cO P^2 \# \overline{\cO P^2}$, it is not possible to construct the fibration $S^7 \to S^8 \times S^{15} \to \cO P^2 \# \overline{\cO P^2}$. However, it appears that one may put down a list of criteria on $M\in \PP \DD_7^{16}$ for which such fibrations do exist. We leave this question open for future research. 

For general $n$, one may increase the value of $k$ to obtain better results for spherical fibrations. The precise bound is given by the number of cyclic summands $r$ in the stable stem $\pi_{n-1}^s$. The fibrations are then obtained in the category $\Top_{1/2}$ if $k>r$. If we further pass to $\Top_{1/2,1/3}$ (that is, also invert the prime $3$), we have a homotopy associative multiplication on $S^{n-1}$ which allows us to construct  spaces $E_2(S^{n-1})\simeq S^{n-1}\ast S^{n-1}$ and  $P_2(S^{n-1})$, and a fibration 
\[ S^{n-1} \to E_2(S^{n-1}) \to P_2(S^{n-1}).\] 
In the category $\Top_{1/2,1/3}$, we prove that the sphere fibrations are obtained as a pullback of the above fibration via a map $M\to P_2(S^{n-1})$. These results are summarized in the following theorem. (see Theorems \ref{sphfibgen}, \ref{sphfibpbk})
\begin{thmc}
a) Let $M_k\in \PP \DD_{n-1}^{2n}$  with $H_nM\cong \Z^k$, and $n>8$. Let $r$ be the number of  cyclic odd torsion summands of $\pi_{n-1}^s$. If $k>r$,  after inverting $2$ there is a fibration $E_k \to M_k$ with fibre $S^{n-1}$ where $E_k\simeq \#^{(k-1)}(S^{n}\times S^{2n-1})$.\\
b) After inverting $2$ and $3$, the fibration $E_k\to M_k$ is a homotopy pullback of $M_k \to P_2(S^{n-1})\leftarrow E_2(S^{n-1})$ for a suitable map $M_k\to P_2(S^{n-1})$.  
\end{thmc}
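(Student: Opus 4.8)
The plan is to prove part (a) by the same obstruction-theoretic strategy outlined in the introduction, but with the sharper input that in $\Top_{1/2}$ the unstable group $\pi_{2n-1}(S^n)$ for $n>8$ even decomposes, after inverting $2$, as a $\Z$-summand (generated by the Whitehead product $[\iota_n,\iota_n]$, since there is no Hopf invariant one class) plus a torsion group whose odd-primary part is governed by the EHP sequence and ultimately by the stable stem $\pi_{n-1}^s$. The key observation is that the number of odd cyclic torsion summands of $\pi_{2n-1}(S^n)_{(2')}$ is at most $r$, the number of odd cyclic torsion summands of $\pi_{n-1}^s$, because the relevant part of $\pi_{2n-1}(S^n)$ sits in the stable range up to the James/EHP discrepancy and the suspension $E\colon\pi_{2n-1}(S^n)\to\pi_{n-1}^s$ is injective on odd torsion here. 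So first I would record this algebraic fact as a lemma, isolating precisely which primes can appear as torsion. Then, exactly as in Theorem A, one builds the associative-algebra map $H_*(\Omega E_k)\to H_*(\Omega M_k)$ using the tensor-algebra-modulo-one-relation descriptions of \cite{BeBo15}, realizes it by a map $\Omega E_k\to\Omega M_k$ via the weak-product decompositions of \cite{BaBa18}, and deloops it cell by cell over the cell structure $E_k\simeq\big((S^n)^{\vee k-1}\vee(S^{2n-1})^{\vee k-1}\big)\cup_\phi\DD^{3n-1}$.

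The point where the hypothesis $k>r$ enters—and where I expect the main obstacle to lie—is the extension of the map over the top cell $\DD^{3n-1}$: the obstruction lives in a group built out of $\pi_{3n-2}(M_k)$, or more precisely out of the indeterminacy coming from $L(M_k)\in\pi_{2n-1}\big((S^n)^{\vee k}\big)$ and its interaction with the attaching map $\phi$. When $k>r$ there are enough $\Z$-summands in the intersection form (equivalently, enough hyperbolic summands after localizing) that one can absorb each potentially obstructing odd-torsion class into a Whitehead product using a primitive homology class, in the spirit of the $S^1$-bundle construction for $4$-manifolds; the counting $k>r$ is exactly what guarantees a suitable basis change of $(S^n)^{\vee k}$ killing the odd torsion of $\pi_{2n-1}(S^n)$ that would otherwise be required to be inverted. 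Once the delooped map $f\colon E_k\to M_k$ is constructed, a Serre spectral sequence computation (using that $H_*\Omega M_k$ and $H_*\Omega E_k$ are known explicitly and that $f$ induces the chosen algebra map) identifies $\Fib(f)$ with $S^{n-1}$ over $\Z[1/2]$, giving the fibration; that $E_k\simeq\#^{k-1}(S^n\times S^{2n-1})$ is recognized by its (co)homology ring and the known classification of such connected sums after inverting $2$.

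For part (b), the plan is to use that in $\Top_{1/2,1/3}$ the sphere $S^{n-1}$ admits a homotopy-associative $H$-space structure—this is the classical fact that suitably localized spheres are homotopy associative (one needs $2$ and $3$ inverted to kill the relevant Massey/Whitehead obstructions)—so that one can form the two-stage bar-type constructions $E_2(S^{n-1})\simeq S^{n-1}\ast S^{n-1}$ and its projective analogue $P_2(S^{n-1})$, with a fibration $S^{n-1}\to E_2(S^{n-1})\to P_2(S^{n-1})$ that is universal among $S^{n-1}$-fibrations with a section after one stage. The map $M_k\to P_2(S^{n-1})$ is then constructed from the fibration of part (a): the section $\Omega M_k\simeq\Omega E_k\times S^{n-1}$ produces a map $S^n\to M_k$ classifying the fibre inclusion data, and obstruction theory over the cells of $M_k$ (using that $P_2(S^{n-1})$ is $(n-1)$-connected with $\pi_n\cong\Z$ in the relevant range) lifts this to a classifying map; pulling back $E_2(S^{n-1})\to P_2(S^{n-1})$ along it reproduces $E_k\to M_k$ up to fibre homotopy equivalence by uniqueness of the classification, since both have the same fibre transport and the same total-space homotopy type. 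The hard part here will be verifying that the homotopy groups of $P_2(S^{n-1})$ in degrees $\le 2n-1$ are concentrated enough (after inverting $2$ and $3$) that the classifying map exists and is unique, which amounts to analyzing $\pi_*(S^{n-1}\ast S^{n-1})$ in a range—again controlled by stable stems with $2,3$ inverted—and matching it against the obstruction groups for $M_k$.
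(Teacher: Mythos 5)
Your part (a) has the right architecture (construct the algebra map on loop space homology, realize it, deloop over the cell structure of $E_k$, then apply the Serre spectral sequence argument), and you correctly identify that $k>r$ enables a basis change on $(S^n)^{\vee k}$ that kills the obstruction. But the algebraic lemma you want to isolate as the key observation -- that $\pi_{2n-1}(S^n)_{(2')}$ has at most $r$ odd cyclic torsion summands because the suspension to $\pi_{n-1}^s$ is injective on odd torsion -- is an overclaim and not what the argument actually needs. After inverting $2$, $\pi_{2n-1}(S^n)\cong R_2\{[\iota_n,\iota_n]\}\oplus E(\pi_{2n-2}(S^{n-1})\otimes R_2)$, and there is no reason for $\pi_{2n-2}(S^{n-1})_{(2')}\to\pi_{n-1}^s$ to be injective. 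What the paper uses is much weaker: by \cite[Theorem 6.1]{Hil54}, $[\iota_n,E\alpha]=[\iota_n,\iota_n]\circ\Sigma^n\alpha$, so the obstruction term $[\alpha_k\circ\omega_k,\alpha_k]$ depends only on the \emph{stable} image $\Sigma\omega_k\in\pi_{n-1}^s$, and the $k>r$ basis change is used only to arrange $\Sigma\omega_k=0$ (Proposition \ref{basisgr1}), not to control the whole unstable group. More seriously, you do not address the other half of the obstruction: the term $\tfrac12 g_{k,k}\big[\alpha_k,[\alpha_k,\alpha_k]\big]$, which is $3$-torsion and unstable (it dies under suspension), so stable-range reasoning says nothing about it. One has to separately arrange $g_{k,k}\equiv 0 \pmod 3$ by a quadratic-form argument (Lemma \ref{quadform3}) that does not disturb the first $r$ basis elements already used to kill the stable parts of the $\omega_i$; without this step the extension over the top cell $\DD^{3n-1}$ is not established.

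For part (b), your plan rests on a classifying-map uniqueness principle that is not available. $P_2(S^{n-1})$ is the second projective space of an $A_3$-space; it is not a classifying space for $S^{n-1}$-fibrations (that would be $BF(n)$), and there is no universal property guaranteeing that a fibration with the same fibre and total-space homotopy type is a pullback of $E_2(S^{n-1})\to P_2(S^{n-1})$ along a unique map. The paper proceeds differently and explicitly: it takes the quotient $s_k\colon (S^n)^{\vee k}\to S^n\simeq P_1(S^{n-1})\to P_2(S^{n-1})$, uses the identity $s_k\circ L(M_k)=g_{k,k}H(m)$ -- which requires the basis from part (a) and, crucially, the fact that after inverting $3$ Proposition \ref{equivsphinfloop} upgrades $\Sigma\omega_k=0$ to $\omega_k=0$ -- and observes that since $P_2(S^{n-1})$ is the mapping cone of $H(m)$ this composite extends over the top cell of $M_k$, producing $\phi_k\colon M_k\to P_2(S^{n-1})$. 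It then compares $E_k$ with the pullback $E=\phi_k^*E_2(S^{n-1})$ by an explicit lifting argument on the skeleton of $E_k$, routing through the auxiliary pullback $W$ of $PQS^n\to QS^n$ and invoking the $(5n+1)$-connectivity of $S^{n-1}\to QS^{n-1}$ to push the lift down to $E_2(S^{n-1})$ and then to $E$; the resulting map $E_k\to E$ is a homology isomorphism. Your appeal to ``uniqueness of the classification'' needs to be replaced by this kind of concrete construction and comparison.
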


As applications of the spherical fibrations constructed here we consider connected sums $N\# M_k$ with $M_k$ as above, and $N$ a general simply connected $2n$-manifold (more precisely, $N\in \PP \DD_1^{2n}$). Using the techniques in \cite{Th22_unpub}, we may observe that a loop space decomposition may be obtained using the fact that the attaching map of the top cell of $M_k$ is inert. The condition {\it inert} means that the map $(S^n)^{\vee k} \to M_k$ has a right homotopy inverse after taking the loop space (that is $\Omega \big((S^n)^{\vee k}\big) \to \Omega M_k$ has a right homotopy inverse). 

We present a fresh view for these loop space decompositions (Theorems \ref{loopspdecconnsum}, \ref{connloopdechigh}, \ref{connloopdec24}). We input our fibrations into the arguments in \cite{HuTh22_unpub} to realize fibrations over $N\# M$ with total space $G_\tau(N)\# E_k$ (Proposition \ref{pdident}). The manifold $G_\tau(N)$ is a union of $N_0 \times S^{n-1}$ and $S^{2n-1}\times \DD^n$ \eqref{Gtaudefn} using an equivalence $S^{2n-1}\times S^{n-1} \to S^{2n-1}\times S^{n-1}$ associated to a map $\tau$ from $S^{2n-1}$ to the space of homotopy equivalences of $S^{n-1}$. Earlier observations about connected sums of sphere products \cite[Theorem B]{BaBa18} imply that the attaching map of the top cell is inert. Thus, the homotopy type of $\Omega(G_\tau(N)\# E_k)$ depends only on $G_\tau(N)-\ast$. We identify this to be $N_0\rtimes S^{n-1}$ and is thus independent of $\tau$ (Proposition \ref{gtauminpt}). Finally, we point out that the loop space decompositions also yield results for the loop space of the configuration spaces of $N\# M_k$ (Theorem \ref{configloopdec}).

\begin{mysubsection}{Organization}
In \S \ref{algres}, we prove algebraic results over the tensor algebra and the graded Lie algebras, that are later used in the paper. More explicitly, we construct the algebra map between the homology of the loop spaces of  $E_k$ and $M_k$.  In  \S \ref{loccat}, we prove Theorem A. This section also contains the spectral sequence argument used to identify the homotopy fibre of $E_k \to M_k$ used throughout the paper. In \S \ref{lowdim}, we point out the results in low dimensions, mainly concentrating on the cases $n=2$ and $n=4$. We prove the results for general $n$ in  \S \ref{highbetti}, where a proof of Theorem C is discussed. The applications to fibrations over $N\# M$ are discussed in \S \ref{appln}.
\end{mysubsection}

\begin{notation} 
The following notations are used throughout the document

\begin{itemize}
\item Throughout the document, $n$ is fixed to be an even integer.
\item All manifolds are assumed to be closed, compact and smooth unless otherwise stated.
\item $\iota_n$ denotes the homotopy class of the identity map $S^n \to S^n$. 
\item For an element $\chi \in \pi_{n+k}(S^n)$, $\chi_{(m)}$ denotes the suspension $E^{m-n}(\chi)\in \pi_{m+k}(S^m)$ for $m\geq n$. 
\item $\pi_m^s$ is the $m^{th}$ stable stem. 
\item For a graded vector space $V$, $T(V)$ denotes the tensor algebra of $V$. 
\item For a graded vector space $V$, $F(V)$ denotes the free graded Lie algebra  on $V$. For an element $\LL\in F(V)$, $L(V,\LL)$ denotes $F(V)/(\LL)$ where $(\LL)$ is the Lie algebra ideal of $F(V)$ generated by $\LL$. 
\item For a space $X$, $\rho$ denotes the composite $\pi_m(X) \stackrel{\cong}{\to} \pi_{m-1}(\Omega X) \to H_{m-1}(\Omega X)$. 
\item $\PP \DD_k^m$ is the collection of $k$-connected Poincar\'{e} duality complexes of dimension $m$. 
\item $T_n$ denotes $\{2\} \cup \{p \mid \exists \mbox{ non-trivial } p\mbox{-torsion in } \pi_{2n-1}(S^n) \}$. The notation $R_n$ stands for the ring $\Z[\frac{1}{p}\mid p\in T_n]$. 
\item The notation $M_k \in \PP \DD_{n-1}^{2n}$ stands for a $(n-1)$-connected Poincar\'{e} duality complex $M_k$ with $H_n(M_k)\cong \Z^k$. 
\item For $M_k \in \PP \DD_{n-1}^{2n}$, the attaching map of the top cell is denoted by $L(M_k):S^{2n-1} \to (S^n)^{\vee k}$. This is expressed in the general form 
\begin{myeq}\label{LMform}
L(M_k)= \sum_{1\leq i < j \leq k} g_{i,j} [\alpha_i, \alpha_j] + \sum_{i=1}^k g_{i,i} \phi_i + \sum_{i=1}^k l_i \psi_i.  
\end{myeq}
where $\alpha_i$ is the inclusion of the $i^{th}$ wedge summand, $\phi_i=\alpha_i \circ \phi$, and $\psi_i = \alpha_i \circ \psi(i)$  with $\phi, \psi(i) \in \pi_{2n-1}S^n$, the Hopf invariant $H(\psi(i))=0$, and $\phi= \eta, \nu,$ or $\sigma$ if $n=2,4,$ or $8$, and $\phi=[\iota_n,\iota_n]$ otherwise. The matrix $\big( (g_{i,j})\big)$ is invertible over $\Z$.  
\item The notation $a_i\in H_{n-1}(\Omega M)$ is defined as $\rho(\alpha_i)$.
\item $E_k$ denotes a space homotopy equivalent to $\#^{k-1}(S^n\times S^{2n-1})$. This depends also on $n$, but we drop it from the notation as $n$ is a fixed even integer. 
\item The $(3n-2)$-skeleton of $E_k$ is a wedge of spheres $(S^n)^{\vee k-1} \vee (S^{2n-1})^{\vee k-1}$. The notation $\mu_i$ stands for the inclusion of the $i^{th}$-copy of $S^n$, and $\delta_i$ stands for the inclusion of the $i^{th}$-copy of $S^{2n-1}$. The loop space homology generators are denoted $\tilde{\mu_i}$, and $\tilde{\delta_i}$, defined as $\tilde{\mu_i}=\rho(\mu_i)$, and $\tilde{\delta_i}=\rho(\delta_i)$. 
\item The notation $X \rtimes Y$ stands for $X\times Y/( \ast\times Y)$, and $X\ltimes Y$ stands for $X\times Y/(X \times \ast)$.
\item The space $F(n)$ is the $H$-space of homotopy equivalences of $S^{n-1}$. 
\end{itemize}
\end{notation}

\begin{ack} 
The first author would like to thank David Blanc for assistance with explicit formula for Whitehead products in low degrees used in the paper. The second author would like to thank Debanil Dasgupta and Pinka Dey for useful discussions.
\end{ack}

\section{Constructing maps between loop space homology algebras}\label{algres}
In this section, we construct a map from the homology of the loop space of a connected sum of copies of $S^n\times S^{2n-1}$ to that of the loop space of a highly connected Poincar\'{e} duality complex. We use the fact that the latter is a quadratic algebra with a single relation which in turn comes from a non-singular intersection form. 

\begin{mysubsection}{Some algebraic results}
Let $V$ be a free module over a principal ideal domain $R$ (in our applications $\Z[\frac{1}{p_1},\cdots, \frac{1}{p_r}]$ for a finite set of primes $\{p_1,\cdots, p_r\}$) of finite rank $k$, and suppose $\alpha : V \to R$ is a non-zero linear function. Let $\LL$ be a symmetric $2$-tensor (that is an element of $Sym^2(V)=(V\otimes V)^{\Sigma_2}$) which is invertible (that is, with respect to any basis the corresponding $k\times k$ matrix with coefficients from $R$ is invertible). We think of $V$ as a graded vector space, also note that the associative algebra $T(V)$ has a graded Lie bracket given by 
\begin{myeq}\label{bracktv}
 [v,w] = v\otimes w - (-1)^{|v||w|}w\otimes v \mbox{ for all } v, w \in T(V) .
 \end{myeq}

\begin{prop}\label{alglem}
With $V$ concentrated in a single grading $m$ for $m$ odd, $\alpha$ and $\LL$ as above, for any basis $v_1, \cdots, v_{k-1}$ of $\Ker(\alpha)$, there are $w_1, \cdots, w_{k-1} \in V\otimes V$ such that 
\[ \begin{aligned} \mbox{ 1. } & \sum_{j=1}^{k-1} [v_j,w_j]= 0 \pmod{\LL}. \\ 
 \mbox{ 2. } & \{w_1,\cdots, w_{k-1} \}  \mbox{ projects to a basis of } V\otimes V/\big(R\{\LL\} + V\otimes \Ker(\alpha)\big). 
\end{aligned} \] 
\end{prop}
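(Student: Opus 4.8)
The plan is to give an explicit formula for the $w_j$ in terms of $\LL$ and a chosen basis, and then check both assertions by direct manipulation in the tensor algebra ($\pmod{\LL}$ meaning modulo the two-sided ideal $(\LL)\subseteq T(V)$). First I would fix coordinates: since $\alpha\neq 0$ and $R$ is a PID, $V/\Ker(\alpha)$ is free of rank one, so $v_1,\dots,v_{k-1}$ extends to a basis $v_1,\dots,v_{k-1},v_k$ of $V$. Write $\LL=\sum_{i,j}\LL_{ij}\,v_i\otimes v_j$ with $\LL_{ij}=\LL_{ji}$ and $((\LL_{ij}))$ invertible over $R$, and put $\ell_j:=\sum_i\LL_{ij}\,v_i\in V$, the ``$j$-th column of $\LL$''. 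Since $((\LL_{ij}))$ is invertible, $\ell_1,\dots,\ell_k$ is again a basis of $V$, and symmetry of $\LL$ gives the two expansions $\LL=\sum_i v_i\otimes\ell_i=\sum_i\ell_i\otimes v_i$ in $V\otimes V$. My candidate is
\[ w_j\ :=\ \ell_j\otimes v_k\ +\ v_k\otimes\ell_j\ -\ \LL_{jk}\,(v_k\otimes v_k),\qquad 1\le j\le k-1. \]

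For statement 1 I would first compute the \emph{full} sum $S:=\sum_{j=1}^{k}[v_j,\ \ell_j\otimes v_k+v_k\otimes\ell_j]$; every bracket here involves an even product of degrees, so $[a,b]=a\otimes b-b\otimes a$. Expanding, the ``outer'' contributions assemble to $\sum_j v_j\otimes\ell_j\otimes v_k=\LL\otimes v_k$ and $\sum_j v_k\otimes\ell_j\otimes v_j=v_k\otimes\LL$, while the ``inner'' sums $\sum_j\ell_j\otimes v_k\otimes v_j$ and $\sum_j v_j\otimes v_k\otimes\ell_j$ are equal — this is exactly where $\LL_{ij}=\LL_{ji}$ enters — and occur with opposite signs, so $S=\LL\otimes v_k-v_k\otimes\LL=[\LL,v_k]\in(\LL)$. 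Subtracting the $j=k$ term and simplifying $[v_k,\ \ell_k\otimes v_k+v_k\otimes\ell_k]=[v_k\otimes v_k,\ \ell_k]=-\sum_{i<k}\LL_{ik}\,[v_i,\ v_k\otimes v_k]$ (using $[v_k\otimes v_k,\ v_k]=0$) yields $\sum_{j<k}[v_j,\ \ell_j\otimes v_k+v_k\otimes\ell_j]=[\LL,v_k]+\sum_{j<k}\LL_{jk}\,[v_j,\ v_k\otimes v_k]$; the term $-\LL_{jk}(v_k\otimes v_k)$ in $w_j$ is designed to cancel the last sum, so $\sum_{j<k}[v_j,w_j]=[\LL,v_k]\equiv0\pmod{\LL}$.

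For statement 2 I would work in $V\otimes V/(V\otimes\Ker(\alpha))\cong V\otimes(V/\Ker(\alpha))\cong R^k$, via $v_i\otimes v_k\mapsto e_i$; under this map $\LL$ goes to the last column $c$ of $((\LL_{ij}))$. Since $c$ is one of the $k$ columns of an invertible matrix and these columns form an $R$-basis of $R^k$, the submodule $Rc$ is a direct summand, so $V\otimes V/(R\{\LL\}+V\otimes\Ker(\alpha))\cong R^k/Rc$ is free of rank $k-1$ with basis the classes of the remaining columns. A short computation shows $w_j$ maps to the $j$-th column of $((\LL_{ij}))$ (the summands $v_k\otimes\ell_j$ and $-\LL_{jk}\,v_k\otimes v_k$ cancel mod $V\otimes\Ker(\alpha)$ because $\LL_{kj}=\LL_{jk}$), so $\overline{w_1},\dots,\overline{w_{k-1}}$ is precisely that basis. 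The only real difficulty is locating the $w_j$: the key point, and the main obstacle, is that the symmetric combination $\ell_j\otimes v_k+v_k\otimes\ell_j$ telescopes against $\LL$ only for the full sum over $j=1,\dots,k$, and one must recognize that the omitted $j=k$ contribution is itself a sum of brackets $[v_j,\ v_k\otimes v_k]$ over $j<k$, hence absorbable into the $w_j$ by a $v_k\otimes v_k$-correction that is invisible in the quotient of statement 2. After that, both checks are routine.
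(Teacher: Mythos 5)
Your proof is correct and in fact produces the same candidate elements as the paper: expanding in the tensor algebra,
\[
\ell_j\otimes v_k+v_k\otimes\ell_j-\LL_{jk}\,v_k\otimes v_k=\sum_{i<k}\LL_{ij}[v_i,v_k]+\LL_{jk}\,v_k\otimes v_k,
\]
which (using $\LL_{ij}=\LL_{ji}$) is exactly the paper's $w_j=\sum_{i<k}g_{j,i}[v_i,v_k]+g_{j,k}\,v_k\otimes v_k$. The only real difference is the verification of statement~1: the paper manipulates brackets inside the graded Lie algebra $T(V)$ via the Jacobi identity together with $[y,x\otimes x]=[[y,x],x]$, whereas you expand everything into tensors and observe the telescoping $\sum_{j=1}^{k}[v_j,\ell_j\otimes v_k+v_k\otimes\ell_j]=\LL\otimes v_k-v_k\otimes\LL$ before subtracting the $j=k$ term; both computations are routine once the formula for $w_j$ is in hand, and your statement~2 argument matches the paper's.
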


\begin{proof}
Given a basis $v_1, \cdots, v_{k-1}$ of $\Ker(\alpha)$, pick $v_k$ such that $v_1, \cdots, v_k$ is a basis of $V$. This is possible as the image of $\alpha$ is a principal ideal $(b)$ of $R$ , and we may pick $v_k$ such that $\alpha(v_k)=b$. As the collection $\{v_i \otimes v_j\}$ is a basis of $V\otimes V$, we have an expression 
\begin{myeq}\label{expl}
 \LL = \sum_{i=1}^k \sum_{j=1}^k g_{i,j} v_i \otimes v_j,
 \end{myeq}
for a symmetric invertible matrix $\big((g_{i,j})\big)$ over $R$. Define $w_i$ by 
\begin{myeq} \label{loopbi}
w_i = \sum_{j=1}^{k-1}g_{i,j}[v_j,v_k] + g_{i,k}v_{k}\otimes v_{k}.
\end{myeq} 
Note that a basis of the free $R$-module $V\otimes V/\big( V\otimes \Ker(\alpha)\big)$ is given by the images of the elements $v_j\otimes v_k$ for $1\leq j \leq k$. It is clear that the coefficients of the $w_i$ in terms of this basis are those of the first $(k-1)$ columns of the matrix  $\big((g_{i,j})\big)$, with the last column corresponding to $\LL$. This proves 2. For the statement 1, we compute using the graded Jacobi identity 
\[ [x,[y,z]]= [[x,y],z] +(-1)^{|x||y|}[y,[x,z]],\]
and the identity  \cite[\S 8.1]{Nei10}
\[[y,x\otimes x]=[[y,x],x],\]
for odd degree classes $x$. We have
\[\begin{aligned} 
\sum_{i= 1}^{k-1}[v_i, w_i] & = \sum_{i= 1}^{k-1}\Big(\sum_{j= 1}^{k-1}g_{i,j}[v_i,[v_j,v_k]] + g_{i,k} [v_i, v_k\otimes v_k] \Big) \\ 
&= \sum_{1\leq i < j \leq k-1} g_{i,j}[[v_i,v_j],v_k] + \sum_{i=1}^{k-1} g_{i,i} [v_i\otimes v_i, v_k] + \sum_{i=1}^{k-1} g_{i,k}[[v_i,v_k],v_k] \\
&= [\LL,v_k] - g_{k,k}[v_k\otimes v_k, v_k]\\ 
&= [\LL,v_k].
\end{aligned}\]
The last step is true as $[v_k\otimes v_k, v_k]=0$. 
\end{proof}

We carry forward the analogy in Proposition \ref{alglem} further using graded Lie algebras. First recall the definition of a graded Lie algebra \cite{Nei10}.  A graded Lie algebra over a ring $\RR$ in which $2$ is not invertible carries an extra squaring operation on odd degree classes to encode the relation $x^2 = \frac{1}{2} [x,x]$ whenever $|x|$ is odd. 
\begin{defn}
 A graded Lie algebra $L=\oplus L_i$ is a graded $\RR$-module together with a Lie bracket
$$[~ ,~ ] : L_i \otimes_{\RR} L_j \to  L_ {i+j}$$
and a quadratic operation called {\it squaring} defined on odd degree classes
$$(~)^2 : L_{2k+1} \to L_{4k+2}.$$
These operations are required to satisfy the identities
\begin{align*}
[x, y] & = -(-1)^{\deg(x)\deg(y)}[y, x]\,\,\,(x\in L_i, y\in L_j),\\
[x, [y, z]] & = [[x, y], z] + (-1)^{\deg(x)\deg(y)} [y, [x, z]] \,\,\,(x\in L_i, y\in L_j, z\in L_k),\\
(ax)^ 2 & = a^2 x^2 \,\,\,(a\in \RR, x\in L_{2k+1}),\\
(x + y)^2 & = x^2 + y^2 + [x, y], \\
[x, x] & = 0 \,\,\,(x\in L_{2i}),\\
2x^2 & = [x, x],\,\,[x, x^2] = 0 \,\,\,\hfill(x\in L_{2k+1}),\\
[y, x^2] & = [[y, x], x] \,\,\,(x\in L_{2k+1}, y\in L_i).
\end{align*}
\end{defn} 

\begin{exam}
Note that $T(V)$ is a graded Lie algebra with the Lie bracket described in \eqref{bracktv}. For $|u|$ odd, $(u)^2$ is defined to be $u \otimes u$. The identities above are easily verified. A symmetric $2$-tensor $\LL$ which is expressed in the form \eqref{expl} may be written as 
\[\sum_{i=1}^k \sum_{j=1}^k g_{i,j} v_i \otimes v_j = \sum_{1\leq i < j \leq k} g_{i,j}[v_i,v_j] + \sum_{i=1}^k g_{i,i} v_i \otimes v_i.\]
Therefore, $\LL$ belongs to the sub-Lie algebra of $T(V)$ generated by $V$ if  $V$ is concentrated in odd degree as in the hypothesis of Proposition \ref{alglem}. 
\end{exam}

\vspace{0.5cm}

It is possible to derive a Poincar\'e-Birkhoff-Witt theorem for graded Lie algebras under the extra assumption that the underlying module is free over $\RR$. 
\begin{theorem}\label{PBW} \cite[Theorem 8.2.2]{Nei10} If $L$ is a graded Lie algebra over $\RR$  which is a free $\RR$-module in each degree, then $U (L)$ is isomorphic to the symmetric algebra on $L$. In terms of the multiplicative structure, the symmetric algebra on $L$ is isomorphic to the associated graded of $U(L)$ with respect to the length filtration induced on $U(L)$. 
\end{theorem}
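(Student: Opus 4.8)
The plan is to run the standard Poincar\'e--Birkhoff--Witt argument, being careful about Koszul signs and, crucially, about the squaring operation, which here plays the role of $x^2=\tfrac12[x,x]$ in the cases where $2$ is not invertible in $\RR$. First I would reduce the statement to a claim about an explicit basis of $U(L)$. Using that $L$ is a free $\RR$-module in each degree, fix a homogeneous $\RR$-basis $\{x_\lambda\}_{\lambda\in\Lambda}$ of $L$ together with a total order on $\Lambda$. Call a tensor monomial $x_{\lambda_1}\otimes\cdots\otimes x_{\lambda_r}$ in $T(L)$ \emph{admissible} if $\lambda_1\le\cdots\le\lambda_r$ and no odd-degree generator is repeated. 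The defining relations of $U(L)$, namely $x\otimes y-(-1)^{|x||y|}y\otimes x=[x,y]$ and $x\otimes x=x^2$ for $|x|$ odd (with $x^2\in L_{2|x|}$ rewritten in the basis), let one rewrite any monomial as an $\RR$-combination of admissible monomials of no greater length; hence the admissible monomials span $U(L)$. Filtering $U(L)$ by tensor length, the associated graded $\operatorname{gr}U(L)$ is graded-commutative and there is a surjection $S(L)\twoheadrightarrow\operatorname{gr}U(L)$ from the symmetric algebra (the free graded-commutative algebra on $L$, incorporating the squaring on the odd part); everything reduces to showing this map is injective, equivalently that the admissible monomials are $\RR$-linearly independent in $U(L)$.

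For the independence I would use the usual module-action device. Let $S$ be the free $\RR$-module on the set of admissible monomials, with the empty monomial denoted $1$. Define operators $\rho(x_\lambda)\colon S\to S$ by a straightening recursion: $\rho(x_\lambda)$ prepends $x_\lambda$ when the result is admissible, and otherwise moves $x_\lambda$ past the leading generator using $x_\lambda\otimes x_\mu=(-1)^{|x_\lambda||x_\mu|}x_\mu\otimes x_\lambda+[x_\lambda,x_\mu]$, or collapses $x_\lambda\otimes x_\lambda$ to $x_\lambda^2$ when $x_\lambda$ is odd, applying $\rho$ recursively to strictly smaller inputs in a well-founded order (e.g.\ lexicographic on the multiset of indices, refined by the number of out-of-order pairs). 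One checks that the recursion terminates and that $\rho$ extends to an algebra homomorphism $\widehat\rho\colon U(L)\to\operatorname{End}_\RR(S)$; concretely this means verifying $\rho([x_\lambda,x_\mu])=\rho(x_\lambda)\rho(x_\mu)-(-1)^{|x_\lambda||x_\mu|}\rho(x_\mu)\rho(x_\lambda)$ and $\rho(x_\lambda)^2=\rho(x_\lambda^2)$ on odd classes, which follow from the graded Jacobi identity and the axioms $(x+y)^2=x^2+y^2+[x,y]$ and $[y,x^2]=[[y,x],x]$. Then the composite $S\to U(L)\xrightarrow{\widehat\rho}\operatorname{End}_\RR(S)\xrightarrow{\,(-)\cdot1\,}S$ is the identity on admissible monomials, so $S\to U(L)$ is split injective; hence the admissible monomials are independent, $S(L)\xrightarrow{\ \cong\ }\operatorname{gr}U(L)$, and lifting a graded $\RR$-basis through $U(L)\twoheadrightarrow\operatorname{gr}U(L)$ (possible since everything is free over $\RR$ degreewise) gives the asserted $\RR$-module isomorphism $U(L)\cong S(L)$.

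The hard part is checking that $\rho$ is a genuine representation compatible with \emph{both} the bracket and the squaring. The bracket compatibility is the classical PBW ``diamond'' computation driven by the Jacobi identity, but every branch of the recursion that introduces $x_\lambda\otimes x_\lambda=x_\lambda^2$ for odd $x_\lambda$ has to be reconciled with the bracket branches, and it is exactly here --- rather than by dividing by $2$ --- that the squaring axioms are used to make the straightening consistent in the presence of $2$-torsion. A secondary point needing care is pinning down $S(L)$ as the correct free graded-commutative object (polynomial on the even-degree basis elements, tensored with the exterior-plus-squaring algebra on the odd ones), so that the surjection $S(L)\twoheadrightarrow\operatorname{gr}U(L)$ whose injectivity we want is the evident one.
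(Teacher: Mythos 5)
The paper does not prove this statement; it is quoted verbatim from Neisendorfer \cite[Theorem 8.2.2]{Nei10}, so there is no internal argument to compare against. Your sketch is the classical Poincar\'e--Birkhoff--Witt straightening argument (span by admissible monomials, then linear independence via a representation $\widehat\rho\colon U(L)\to\operatorname{End}_\RR(S)$ on the free module on admissible monomials, with the compatibility of $\rho$ with the bracket and with the squaring being the diamond computation). This is essentially the argument in the cited source, and you correctly identify that the squaring axioms $(x+y)^2=x^2+y^2+[x,y]$ and $[y,x^2]=[[y,x],x]$ are what keep the straightening consistent without ever dividing by $2$.

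One point you should correct is the description of the symmetric algebra. You say $S(L)$ should ``incorporate the squaring on the odd part'' and call it an ``exterior-plus-squaring'' algebra, but that is not what the theorem asserts. The symmetric algebra $S(L)$ is the ordinary one: polynomial on an $\RR$-basis of $L_{\mathrm{even}}$ tensored with the exterior algebra on a basis of $L_{\mathrm{odd}}$, so odd generators square to \emph{zero} in $S(L)$. The squaring $(\;)^2\colon L_{2k+1}\to L_{4k+2}$ is part of the Lie algebra structure of $L$ itself; in $U(L)$ the relation $x\otimes x=\iota(x^2)$ drops the length filtration from $2$ to $\le 1$, so $\bar x^2=0$ in $\operatorname{gr}^2 U(L)$, and the associated graded really is the plain graded-commutative $S(L)$, not a squaring- or divided-power-enriched variant. (This is exactly how the theorem is used in the paper in the Poincar\'e series computation of Proposition~\ref{alglem2}, where the odd-degree contribution is $(1+t^d)^{l_d}$, i.e.\ exterior.) Also, once you have established that the admissible monomials are linearly independent, they \emph{are} an $\RR$-basis of $U(L)$; there is no separate ``lifting a graded $\RR$-basis through $U(L)\twoheadrightarrow\operatorname{gr}U(L)$'' step needed for the module isomorphism.
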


We also note the following result which states that the graded Lie algebra injects into the universal enveloping Lie algebra. 
\begin{prop}\label{grinj} \cite[Theorem 2.21]{BaBa18}
Suppose that $\RR$ is a Principal Ideal Domain. Let $L$ be a graded Lie algebra over $\RR$ such that $L_n$ is finitely generated for every $n$. Let $U(L)$ be its universal enveloping algebra. Then the natural map $\iota:L \to U(L)$ is injective. 
\end{prop}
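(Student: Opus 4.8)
The plan is to verify injectivity of $\iota$ one degree at a time, splitting a nonzero homogeneous class according to whether it is $\RR$-torsion: the torsion-free case follows directly from the Poincar\'e--Birkhoff--Witt theorem (Theorem \ref{PBW}), and the torsion case is reduced to a discrete valuation ring by base change. Since $\iota$ is a map of graded $\RR$-modules, it suffices to show $\iota(x)\neq 0$ for every nonzero $x\in L_n$.

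Suppose first that $x$ is not $\RR$-torsion. Let $T\subseteq L$ be the graded submodule of torsion elements; it is a Lie ideal, since $ax=0$ with $0\neq a\in\RR$ forces $a[x,y]=[ax,y]=0$ and $a^2(x)^2=(ax)^2=0$, so brackets and squares of torsion classes are again torsion. Hence $L/T$ is a graded Lie algebra which in each degree is finitely generated and torsion-free over the PID $\RR$, hence free; by Theorem \ref{PBW} the unit map $L/T\to U(L/T)$ is injective (it is the inclusion of the length-$1$ summand of the symmetric algebra). By functoriality of $U$ the composite $L\xrightarrow{\iota}U(L)\to U(L/T)$ agrees with $L\twoheadrightarrow L/T\hookrightarrow U(L/T)$, and the image of $x$ in $L/T$ is nonzero, so $\iota(x)\neq 0$. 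This settles every non-torsion class.

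Now suppose $x\in L_n$ is nonzero and torsion. The universal enveloping algebra commutes with localization, so after localizing at a prime $(\pi)$ dividing the annihilator of $x$ (which leaves $x$ nonzero) we may assume $\RR$ is a discrete valuation ring with uniformizer $\pi$. By Krull's intersection theorem $\bigcap_k\pi^kL_n=0$, so there is a largest $t\geq 0$ with $x=\pi^ty$ and $y\notin\pi L_n$; then $y$ has annihilator $(\pi^e)$ with $e\geq t+1$. Base change along $\RR\to\bar\RR:=\RR/\pi^{t+1}$ gives $U(\bar L)\cong U(L)/\pi^{t+1}U(L)$ for $\bar L:=L/\pi^{t+1}L$, and it is enough to show $\pi^t\iota_{\bar L}(\bar y)\neq 0$ in $U(\bar L)$. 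Here $\bar y\notin\pi\bar L_n$ and $\bar y$ has trivial $\bar\RR$-annihilator, so $\bar y$ spans a free $\bar\RR$-direct summand of $\bar L_n$. Writing $U(\bar L)=T(\bar L)/I$, one checks, using the graded Jacobi identity and the squaring axioms exactly as in the proof of Theorem \ref{PBW}, that $I$ is generated over a minimal generating set of $\bar L$ by relations whose length-$2$ leading terms are independent, so the associated graded of $U(\bar L)$ for the length filtration is the symmetric algebra on $\bar L$; in particular $\mathrm{gr}_1 U(\bar L)\cong\bar L$ in positive degrees, and $\iota_{\bar L}(\bar y)$ represents $\bar y$ there. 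Since $\bar y$ generates a free $\bar\RR$-summand, $\pi^t\bar y\neq 0$, hence $\pi^t\iota_{\bar L}(\bar y)\neq 0$.

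The torsion-free part is immediate from Theorem \ref{PBW}; the substantive step is the last one, namely establishing the Poincar\'e--Birkhoff--Witt conclusion $\mathrm{gr}\,U(\bar L)\cong S(\bar L)$ over the Artinian principal ideal ring $\bar\RR=\RR/\pi^{t+1}$, where $\bar L$ need not be free in each degree; equivalently, one must show $I\cap\bar L_N=0$ inside $T(\bar L)_N$ in positive degrees $N$. This is where the hypotheses are genuinely used: finite generation of each $L_n$ is needed both to reduce to a discrete valuation ring and to apply Krull's theorem, and the principal ideal property is what makes $\bar L_n$ a sum of cyclic modules with $\bar y$ spanning a free summand. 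I expect the main obstacle to be making this Artinian version of the filtration/Gröbner-basis argument precise; over a field or a free module it is exactly Theorem \ref{PBW}, and over $\bar\RR$ it should follow by the same overlap computations together with the cyclic-module structure, but it requires care because $\bar\RR$ is not a domain.
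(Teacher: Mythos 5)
The paper does not prove this proposition; it imports it from \cite{BaBa18}, so there is no in-paper argument to measure your proposal against. Judged on its own terms, your torsion-free reduction is correct and clean: the graded torsion submodule $T\subseteq L$ is a Lie ideal on which squaring descends (one checks $(z+t)^2-z^2=t^2+[z,t]\in T$ for torsion $t$, which you essentially do), $L/T$ is degreewise free and finitely generated, Theorem \ref{PBW} applies, and functoriality of $U$ disposes of every non-torsion class. The set-up for the torsion case is also sound: after localizing at $(\pi)$, taking $t$ maximal with $x=\pi^t y$ and $y\notin\pi L_n$ does force, via the structure theorem over the DVR together with the maximality of $t$, that $\bar y$ generates a free $\bar\RR$-direct summand of $\bar L_n$, and $U(\bar L)\cong U(L)/\pi^{t+1}U(L)$ by right-exactness of base change.

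The gap is the last step, and it is exactly where the content of the proposition lives. Theorem \ref{PBW} applies only to graded Lie algebras that are degreewise \emph{free} over the base ring, and $\bar L$ over $\bar\RR=\RR/\pi^{t+1}$ is not; there is no Artinian Poincar\'e--Birkhoff--Witt theorem available to quote, and no reduction is offered. The sketch of a diamond-lemma argument with ``independent length-$2$ leading terms'' does not survive scrutiny over a ring with nilpotents: once a generator $w$ of a cyclic torsion summand of $\bar L$ satisfies $\pi^c w=0$, multiplying a defining quadratic relation of $U(\bar L)$ by $\pi^c$ can collapse its length, producing overlaps with no analogue in the free setting, and even the target $S(\bar L)$ of the associated-graded comparison is delicate to define for a non-free graded module when $2$ is not invertible. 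There are classical cautionary examples (going back to Cohn) of Lie algebras over base rings with nilpotents for which $L\to U(L)$ fails to be injective precisely because the module is not free, so this step cannot be expected to be formal; one would need to exploit the squaring axioms and the cyclic-module decomposition in an essential way, which you do not do. In short, the substantive half of the claim --- the one your reduction was designed to isolate --- is left unproved, as you yourself flag, and until it is supplied the argument is incomplete.
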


The graded Lie algebra of interest is $L(V,\LL)$ which is defined to be the graded Lie algebra $F(V)/(\LL)$. The notation $F(V)$ stands for the free Lie algebra generated by $V$ and $\LL$ being a symmetric $2$-tensor, lies in $F(V)$ (as $V$ is concentrated in odd degree). We may express the graded Lie algebra as 
\[ L(V,\LL) \cong V \oplus \Big[[V,V] + (V)^2 \Big]/(\LL) \oplus \cdots \]

\begin{exam}
Note that $U(F(V))\cong T(V)$, and \cite[Proposition 2.9]{BaBa18} implies that $U(L(V,\LL))\cong T(V)/(\LL)$.
\end{exam}
\vspace{0.5cm}

Let $\dim(V)=k$. For a $(k-1)$-dimensional summand $W$ of $V$, write   
\[ L^W(V,\LL) \cong W \oplus \Big[[V,V] + (V)^2 \Big]/(\LL) \oplus \cdots \]
which becomes a Lie subalgebra of $L(V,\LL)$. We note that Proposition \ref{alglem} actually identifies the Lie algebra $L^W(V,\LL)$. 
\begin{prop} \label{alglem2}
Given any basis $v_1,\cdots, v_{k-1}$ of $W$, there are $w_1,\cdots, w_{k-1}$ satisfying the conditions of Proposition \ref{alglem} such that the map $F(v_1,\cdots, v_{k-1}, w_1,\cdots, w_{k-1}) \to L(V,\LL)$ induces an isomorphism of graded Lie algebras
\[ \frac{F(v_1,\cdots, v_{k-1},w_1,\cdots, w_{k-1})}{(\sum_{i=1}^{k-1} [ v_i, w_i])} \cong L^W(V,\LL).\]
\end{prop}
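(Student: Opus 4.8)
The plan is to bootstrap from Proposition \ref{alglem} and then pass to universal enveloping algebras. Since $W$ is a corank-one summand of $V$, the quotient $V/W$ is free of rank one, and any identification $V/W\cong R$ gives a surjection $\alpha\colon V\to R$ with $\Ker\alpha=W$; thus Proposition \ref{alglem} applies. Extending the given basis $v_1,\dots,v_{k-1}$ of $W$ to a basis $v_1,\dots,v_k$ of $V$ as in the proof of \ref{alglem} and taking $w_1,\dots,w_{k-1}$ to be the elements \eqref{loopbi}, one observes that each $w_i$ already lies in the weight-two part $[V,V]+(V)^2$ of $F(V)$. The computation in the proof of \ref{alglem} gives $\sum_{i=1}^{k-1}[v_i,w_i]=[\LL,v_k]$ in $T(V)=U(F(V))$; since $F(V)\hookrightarrow T(V)$ is injective by Proposition \ref{grinj}, this identity holds already in $F(V)$, where $[\LL,v_k]$ lies in the Lie ideal $(\LL)$. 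Hence the unique Lie map $F(v_1,\dots,v_{k-1},w_1,\dots,w_{k-1})\to L(V,\LL)$ sending the generators to the indicated elements kills the relator and descends to $\Phi\colon\bar G:=F(v_1,\dots,v_{k-1},w_1,\dots,w_{k-1})/\big(\sum_i[v_i,w_i]\big)\to L(V,\LL)$; its image lies in the Lie subalgebra $L^W(V,\LL)$, since the degree-$m$ generators land in $W$ and the degree-$2m$ ones in $(L^W)_{2m}=(L(V,\LL))_{2m}$.

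Next I would show $\Phi$ is surjective by checking that $v_1,\dots,v_{k-1},w_1,\dots,w_{k-1}$ generate $L^W(V,\LL)$ as a graded Lie algebra. This is clear in degree $m$. In degree $2m$, the subalgebra they generate contains $[W,W]+(W)^2=Sym^2W$ together with the $w_i$; condition 2 of Proposition \ref{alglem} says precisely that $w_1,\dots,w_{k-1},\LL$ are independent modulo $V\otimes W$, and a rank count (using that $Sym^2W$ is a summand of $Sym^2V$) gives $Sym^2W+\sum_iRw_i+R\LL=Sym^2V$, so $(L^W)_{2m}=Sym^2V/R\LL$ is generated. Since $L^W$ agrees with $L(V,\LL)$ in all degrees $\ge 2m$ and $L(V,\LL)$ is generated in degree $m$, an induction on degree using the Jacobi identity — rewriting any bracket monomial beginning with $v_k$ in terms of the degree-$2m$ operators $[v_k,-]$ and generators from $W$, which is possible because $k-1\ge 1$ — shows that $W$ and $(L^W)_{2m}$ together generate $L^W$. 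Hence $\Phi$ is onto.

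For injectivity I would pass to enveloping algebras: by Proposition \ref{grinj} it suffices that $U(\Phi)\colon U(\bar G)\to U(L^W(V,\LL))$ be an isomorphism, and since it is onto by the previous step and both sides are free $R$-modules of finite type in each degree (by the structure theory of \cite{BaBa18}), it is enough to compare Hilbert series. On one side, $U(\bar G)\cong T\langle v_1,\dots,v_{k-1},w_1,\dots,w_{k-1}\rangle/\big(\sum_i[v_i,w_i]\big)$ is exactly the presentation of $H_*(\Omega E_k)$, which has Hilbert series $\big(1-(k-1)t^m-(k-1)t^{2m}+t^{3m}\big)^{-1}$ by the loop space decomposition of $E_k$ \cite{BeBo15,BaBa18}. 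On the other side, $L(V,\LL)=L^W(V,\LL)\oplus Rv_k$ as graded $R$-modules with $Rv_k$ concentrated in the odd degree $m$, so Theorem \ref{PBW} gives $\mathrm{Hilb}\,U(L(V,\LL))=(1+t^m)\cdot\mathrm{Hilb}\,U(L^W(V,\LL))$; since $U(L(V,\LL))\cong T(V)/(\LL)\cong H_*(\Omega M_k)$ has Hilbert series $(1-kt^m+t^{2m})^{-1}$ and $(1-kt^m+t^{2m})(1+t^m)=1-(k-1)t^m-(k-1)t^{2m}+t^{3m}$, the two Hilbert series agree. Therefore $U(\Phi)$, hence $\Phi$, is an isomorphism.

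I expect the injectivity step to be the main obstacle: it relies on knowing the Hilbert series of the abstractly presented algebra $U(\bar G)$ — equivalently, that the single relator forces no further collapse — which is imported from the known loop space decomposition of $E_k$, together with the freeness over $R$ of the modules involved. The rest of the argument is a careful but essentially formal unwinding of Proposition \ref{alglem}.
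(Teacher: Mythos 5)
Your proposal is correct and follows essentially the same approach as the paper's proof: produce the $w_i$ via Proposition \ref{alglem}, show the induced Lie map to $L^W(V,\LL)$ is surjective by a Jacobi-identity induction (degree $m$, then degree $2m$ via condition 2, then pushing $v_k$ inward), and show injectivity by passing to universal enveloping algebras and matching Hilbert series using the factorization $1-(k-1)t^m-(k-1)t^{2m}+t^{3m}=(1+t^m)(1-kt^m+t^{2m})$ together with the degreewise freeness coming from \cite{BaBa18} and Proposition \ref{grinj}. The only cosmetic difference is the order (the paper does the Hilbert-series comparison before the surjectivity induction) and your explicit remark that $\sum_i[v_i,w_i]=[\LL,v_k]$ already holds in $F(V)$ via $F(V)\hookrightarrow T(V)$, which the paper leaves implicit.
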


\begin{proof}
Let $\FF$ stand for the left hand side of the equation in the statement of the Proposition. We wish to show that $\FF \cong L^W(V,\LL)$. We note that the universal enveloping algebra of $\FF$ is $T(v_1,\cdots, v_{k-1}, w_1,\cdots, w_{k-1})/(\sum [v_i,w_i])$, and the universal enveloping algebra for $L(V,\LL)$ is $T(V)/\LL$. It follows from Proposition \ref{grinj} that both $\FF$ and $L^W(V,\LL)$ are free of finite rank in each grading. Our first observation is that the  Poincar\'{e}-Birkoff-Witt theorem (Theorem \ref{PBW}) implies that the ranks in each degree are the same. We then show that the map is surjective which will complete the proof.

Let $V$ be concentrated in grading $n-1$ which is odd, and let $l_d$ denote the degree $d$ part of $L(V,\LL)$. The symmetric algebra on $L(V,\LL)$ then has Poincar\'{e} series 
\[\frac{\prod_{d ~ odd }(1+ t^{d})^{l_{d}}}{\prod_{d ~ even}(1- t^{d})^{l_{d}}}. \]
By Theorem \ref{PBW}, this is the Poincar\'{e} series of the universal enveloping algebra $T(V)/(\LL)$. As $V$ is concentrated in a single degree $n-1$, and $\LL$ is a quadratic element it follows from \cite[\S 4.4]{BaBa18} that 
\[ \frac{\prod_{d ~ odd }(1+ t^{d})^{l_{d}}}{\prod_{d ~ even}(1- t^{d})^{l_{d}}} = \frac{1}{1-kt^{n-1}+t^{2n-2}}.\]
Analogously, for $\FF$, we let $f_d$ denote the degree $d$ part of $\FF$, and we apply the techniques from \cite[\S 5.2]{BaBa18} to deduce 
\[ \frac{\prod_{d ~ odd }(1+ t^{d})^{f_{d}}}{\prod_{d ~ even}(1- t^{d})^{f_{d}}} = \frac{1}{1-(k-1)t^{n-1}- (k-1)t^{2n-2}+t^{3n-3}}.\]
We now have the factorization
\[ 1-(k-1)t^{n-1}- (k-1)t^{2n-2}+t^{3n-3} = (1+t^{n-1})(1-kt^{n-1} + t^{2n-2}),\]
which implies 
\[ f_d = \begin{cases} l_d &\mbox{ if } d\neq n-1 \\
                        l_d -1 &\mbox{ if } d=n-1 . \end{cases} \]
This implies that the degree-wise rank of $\FF$ matches that of $L^W(V,\LL)$. 

We now complete the proof by showing that the map 
\[ F(v_1,\cdots, v_{k-1}, w_1,\cdots, w_{k-1}) \to L^W(V,\LL)\]
is surjective. We choose $\alpha$ such that $\Ker(\alpha)=W$ in the notation of Proposition \ref{alglem}, and choose $v_k$ so that $v_1,\cdots, v_k$ is a basis for $V$, and proceed by induction on the length of a bracket $r$, to show that any element of $L^W(V,\LL)$ of the form $[[\cdots [l_1, l_2],\cdots], l_r]$ belongs to the image (modulo $\LL$), where each $l_i$ is one of the basis elements $v_j$. It is enough to show this as, if $\lambda$ lies in the image then so does $\lambda^2$, and a bracket whose entries contain squares may be rewritten in terms of those without the squares using the identity $[y,x^2] = [[y,x],x]$, and the Jacobi identity.  For $r=1$, we are done by the choice that $v_1,\cdots, v_{k-1}$ is a basis of $W$. For $r=2$, we are done by the fact that $W\otimes V$, $w_1, \cdots, w_{k-1}$ and $\LL$ together form a basis of $V\otimes V$ by 2) of Proposition \ref{alglem}.   

In the general case, the proof follows from the induction hypothesis if $l_r=v_j$ for $j<k$. We only need to consider $l_r=v_k$. Now the term of interest is $[l_{r-1}, v_k]$ where $l_{r-1}$ lies in the image. Here, we have that the $l_{r-1}$ is the sum of iterated brackets on the $v_j$ and $w_j$, so it suffices to show by the Jacobi identity that $[w_j, v_k]$ is thus representable. From the formula of $w_j$, we have 
\[ [w_j, v_k] = \sum_{l<k} g_{j,l} \big[[v_l,v_k],v_k] + g_{j,k}[v_k^2,v_k]= \sum_{l<k} g_{j,l}[v_l, v_k^2].\]
As $v_k^2 $ lies in $V\otimes V$, the $r=2$ argument expresses this in the image, and thus this expression belongs to the image. The proof is thus complete.   
\end{proof}

\end{mysubsection} 

\begin{mysubsection} {The homology of highly connected Poincar\'{e} duality complexes}
Recall that a Poincar\'{e} duality complex of dimension $r$ is a cell complex, together with a homology class in degree $r$, the cap product with which induces Poincar\'{e} duality as in a manifold of dimension $r$. We write $\PP \DD_k^r$ to be the collection of Poincar\'{e} duality complexes of dimension $r$ that are $k$-connected.  Let $M\in \PP \DD_{n-1}^{2n}$ with $n$ even. The Poincar\'{e} duality condition guarantees that $H_n(M)\cong \Z^k$ for some $k$, and the homology is of the form 
\begin{myeq} \label{homM}
 H_i(M)= \begin{cases} \Z & \mbox{ if } i=0, 2n \\ 
                          \Z^k & \mbox{ if } i=n \\ 
                       0 & \mbox{ otherwise.} \end{cases} 
\end{myeq}
We write $M_k$ for an element of $\PP \DD_{n-1}^{2n}$ having the homology described in \eqref{homM}. Now a minimal cell structure \cite{Hat01} on the space $M_k$ implies the pushout square
\begin{myeq}\label{poM}
\xymatrix{
S^{2n-1}\ar[r]^{i} \ar[d]^{L(M_k)} & \DD^{2n} \ar[d] \\
(S^{n})^{\vee_k}  \ar[r] & M_k,}
\end{myeq}
where $L(M_k)\in \pi_{2n-1}\big((S^n)^{\vee k}\big)$. Let $\alpha_i$ denote the inclusion of the $i^{th}$-copy of $S^n$ in $M_k$. Using Hilton's theorem \cite{Hil55}, one has the decomposition  
\[\pi_{2n-1}\big((S^{n})^{\vee k}\big)= \big(\pi_{2n-1}(S^{n})\big)^{\oplus k}\oplus \big(\pi_{2n-1}(S^{2n-1})\big)^{\oplus \binom{k}{2}}.\]
The first factor in the above decomposition is induced by the inclusion of the spherical wedge summands, and the second factor by the Whitehead products of a choice of two summands. We note down the primes appearing in the torsion subgroup of $\pi_{2n-1}(S^n)$ in the following notation. 
\begin{notation}\label{Tn}
Define $T_n= \{2 \} \cup \{ p \mid p \mbox{ prime and } \exists \mbox{ non-trivial } p\mbox{-torsion in } \pi_{2n-1}(S^n) \}$. 
\end{notation}

A key step in the computations of this paper is the loop space homology of $M_k$, that is, $H_\ast \Omega M_k$. This is a (associative) ring, which is a quadratic algebra if $k\geq 2$ \cite{BaBa18}. More precisely, let $a_i\in H_{n-1}(\Omega M_k)$ denote the Hurewicz image of the  adjoints of the $\alpha_i: S^n \to M_k$, and $l(M_k)$ denote the image of $L(M_k)$ under the composite  
$$\rho \colon \pi_{2n-1}\big((S^{n})^{\vee_{k}}\big)\xrightarrow{\cong} \pi_{2n-2}\Omega\big((S^{n})^{\vee_{k}}\big)\to H_{2n-2}\Big(\Omega\big((S^{n})^{\vee k}\big) \Big).$$
The classes $a_i$ serve as algebra generators of $H_\ast \Omega M_k$ and in their terms $l(M_k)$ may be expressed as 
$$l(M_k)= \sum_{i,j}-g_{i,j}a_{i}\otimes a_{j}.$$
To figure out the sign, observe that 
\[ \rho([\alpha_i,\alpha_j])=(-1)^{|\alpha_i|-1}[a_i,a_j] = -[a_i,a_j] \mbox{ as $n$ is even  \cite{Sam53}. }\]
The matrix $\big((g_{i,j})\big)$ is inverse of the matrix of the intersection form of $M_k$, so in the notation of Proposition \ref{alglem}, this is a symmetric, invertible $2$-tensor. The homology of the loop space may be computed as \cite{BaBa18, BeBo15}
\begin{myeq}\label{loophomM}
H_\ast(\Omega M_k) \cong T(a_1, \cdots, a_k)/\big(l(M_k)\big).
\end{myeq}
\end{mysubsection} 

\begin{mysubsection}{Loop space homology of a connected sum of sphere products}
A connected sum of sphere products has the form $T=S^{k_1}\times S^{n-k_1}  \# \cdots \# S^{k_r}\times S^{n-k_r}$. This is a pushout
$$
\xymatrix{
S^{n-1} \ar[rr] \ar[dd]^ {\sum_{i= 1}^r [\iota_{k_{i}}, \iota_{n-k_i}]} & & \DD^n \ar[dd] \\
\\
\vee_{1\leq i \leq r} S^{k_i}\vee S^{n-k_i}  \ar[rr]^-{{\vee_{i= 1}^{k-1}(\mu_{i} \vee \delta_{i})}} && S^{k_1}\times S^{n-k_1}  \# \cdots \# S^{k_r}\times S^{n-k_r}. }
$$
In this expression $\mu_i : S^{k_i} \to T$ and $\delta_i : S^{n-k_i}\to T$ denotes the inclusion of the various factors. In these terms the loop space homology of $T$ is given by \cite{BaBa18}
\begin{myeq}\label{loophomsphprod}
H_\ast \Omega T \cong T(\tilde{\mu_1}, \tilde{\delta_1}, \cdots, \tilde{\mu_r}, \tilde{\delta_r})/\Big(\sum_{i=1}^r [\tilde{\mu_i}, \tilde{\delta_i}]\Big), 
\end{myeq}
where the superscript $\sim$ is used to denote the adjoint of the class in loop space homology. In this paper, the connected sum $T$ used is of the form $\#^{k-1} (S^n \times S^{2n-1})$. We retain the notation $\mu_i, \delta_i$ as above, and we have the pushout 
\begin{myeq}\label{connsumsph}
\xymatrix{
S^{3n-2} \ar[rr] \ar[d]_{\Sigma_{i= 1}^{k-1}[\iota^{n}_{i}, \iota^{2n-1}_{i}]} && \DD^{3n-1} \ar[d] \\
(S^{n}\vee S^{2n-1})^{\vee_{k-1}}  \ar[rr]^{{\vee_{i= 1}^{k-1}(\mu_{i} \vee \delta_{i})}} && \#^{(k-1)}(S^{n}\times S^{2n-1}) .}
\end{myeq}
The homology of the loop space is given by \eqref{loophomsphprod}. Now we look at Proposition \ref{alglem} from the perspective of the connected sum of sphere products above. We may map the generators $\tilde{\mu_i}$ of the loop space homology to the generators $a_i$ of $H_\ast \Omega M$ for $1\leq i \leq k-1$. Proposition \ref{alglem} now tells us where to send the other generators $\tilde{\delta_i}$ to obtain an algebra map from $H_\ast \Omega T \to H_\ast \Omega M$. We summarize this in the following algebraic result. 
\begin{prop}\label{algmap}
Given a basis $a_1, \cdots, a_k$ of $H_{n-1} \Omega M_k$,  there is a map of associative algebras $H_\ast \Omega \#^{k-1} (S^n\times S^{2n-1}) \to H_\ast \Omega M_k$ which sends $\tilde{\mu_i}$ to the classes $a_i$ for $1\leq i \leq k-1$.
\end{prop}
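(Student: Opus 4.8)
The plan is to realize the algebra map directly from the structure established in Proposition \ref{alglem} applied to the data coming from $M_k$. Recall that by \eqref{loophomM} we have $H_\ast\Omega M_k \cong T(a_1,\dots,a_k)/(l(M_k))$ with $l(M_k)=-\sum_{i,j}g_{i,j}a_i\otimes a_j$, and by \eqref{loophomsphprod} applied to \eqref{connsumsph} we have $H_\ast\Omega\#^{k-1}(S^n\times S^{2n-1})\cong T(\tilde\mu_1,\tilde\delta_1,\dots,\tilde\mu_{k-1},\tilde\delta_{k-1})/(\sum_{i=1}^{k-1}[\tilde\mu_i,\tilde\delta_i])$. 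So I need a map between these two quotient tensor algebras.

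First I would set $V=H_{n-1}\Omega M_k$, a free $R$-module of rank $k$ on the $a_i$ (here $R=R_n$ or whichever PID we localize over), concentrated in the odd grading $m=n-1$, and take $\LL = -l(M_k)=\sum g_{i,j}a_i\otimes a_j \in Sym^2(V)$, which is invertible since $\big((g_{i,j})\big)$ is the inverse of the intersection form. Let $\alpha\colon V\to R$ be any linear functional with $\Ker(\alpha)$ spanned by $a_1,\dots,a_{k-1}$ (for instance dual to $a_k$ in the chosen basis). Proposition \ref{alglem} then supplies elements $w_1,\dots,w_{k-1}\in V\otimes V$, explicitly given by \eqref{loopbi}, satisfying $\sum_{i=1}^{k-1}[a_i,w_i]\equiv 0 \pmod{\LL}$.

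Next I would define the algebra homomorphism on generators: send $\tilde\mu_i\mapsto a_i$ and $\tilde\delta_i\mapsto w_i$ for $1\le i\le k-1$. Since $T(\tilde\mu_1,\tilde\delta_1,\dots)$ is free, this extends uniquely to a map of associative algebras $T(\tilde\mu_1,\tilde\delta_1,\dots,\tilde\mu_{k-1},\tilde\delta_{k-1})\to T(a_1,\dots,a_k)$; composing with the quotient to $H_\ast\Omega M_k$, I must check that the single relation $\sum_{i=1}^{k-1}[\tilde\mu_i,\tilde\delta_i]$ maps to zero. But its image is $\sum_{i=1}^{k-1}[a_i,w_i]$, which is exactly $[\LL,a_k]$ by the computation in the proof of Proposition \ref{alglem}, hence lies in the two-sided ideal generated by $\LL=-l(M_k)$ and so vanishes in $H_\ast\Omega M_k$. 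Therefore the map descends to the desired algebra homomorphism $H_\ast\Omega\#^{k-1}(S^n\times S^{2n-1})\to H_\ast\Omega M_k$ sending $\tilde\mu_i$ to $a_i$.

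The only subtlety — and the step most in need of care rather than difficulty — is bookkeeping around the squaring/bracket conventions: $n$ is even so $n-1$ is odd, the classes $a_i$ and $\tilde\mu_i,\tilde\delta_i$ are odd-degree, and one uses the identities $[y,x\otimes x]=[[y,x],x]$ and the graded Jacobi identity exactly as in Proposition \ref{alglem}; there is nothing new to prove here, only the observation that the identical bracket computation carries over verbatim with $v_i$ replaced by $a_i$ and $v_k$ replaced by $a_k$. Thus the proof is essentially a direct application of Proposition \ref{alglem} together with the presentations \eqref{loophomM} and \eqref{loophomsphprod}; I would also remark that by item 2 of Proposition \ref{alglem} the images $w_i$ together with $\LL$ and $V\otimes\Ker(\alpha)$ span $V\otimes V$, which is the fact that will later be needed when one wants this algebra map to be realized by a map of spaces.
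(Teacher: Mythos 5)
Your proposal is correct and follows exactly the approach the paper intends: realize $H_\ast\Omega E_k\to H_\ast\Omega M_k$ by sending $\tilde\mu_i\mapsto a_i$, $\tilde\delta_i\mapsto w_i$ where the $w_i$ come from Proposition \ref{alglem} applied to $V=H_{n-1}\Omega M_k$ and $\LL=-l(M_k)$, and then check that the single defining relation of $H_\ast\Omega E_k$ lands in the ideal $(l(M_k))$. The paper treats this as an immediate consequence of Proposition \ref{alglem} together with the presentations \eqref{loophomM} and \eqref{loophomsphprod} and does not spell out the verification; you have simply made the implicit argument explicit, with the correct bookkeeping of signs and the choice of $\alpha$ dual to $a_k$.
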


Our efforts in this paper involve \\
1) Realize the above algebra map by a map of spaces $\#^{k-1} (S^n\times S^{2n-1}) \to M_k$ for $k\geq 2$. \\
2) Identify the homotopy fibre of the corresponding map. 

We show that it is possible to achieve 1) after inverting the primes in $T_n$ or if the value of $k$ is large. Once this is achieved the homotopy fibre is shown to be homotopy equivalent to $S^{n-1}$ by a spectral sequence argument. 
\end{mysubsection}

\section{Sphere fibrations in a localized category}\label{loccat}
The objective of this section is to construct sphere fibrations over highly connected Poincar\'{e} duality complexes, after inverting finitely many primes. In this case, we also have a specific understanding of the primes that need to be inverted as the set $T_n$ of Notation \ref{Tn}. In the entire section, we work in the localized category of spaces in which the primes in $T_n$ are inverted. We write $R_n= \Z[\{\frac{1}{p} \mid p\in T_n \}]$, and the homology computations throughout are taken with $R_n$-coefficients. For a space $X$, recall that $\rho$ is the map 
\[\rho : \pi_n(X) \cong \pi_{n-1}(\Omega X) \to H_{n-1}(\Omega X).\]
Recall the equation $\rho([\gamma_1, \gamma_2]) = (-1)^{|\gamma_1|-1}[\rho(\gamma_1), \rho(\gamma_2)]$ \cite{Sam53}. We prove the following main theorem.
\begin{theorem}\label{sphfibloc3}
For $k\geq 2$, let $M_k\in \PP \DD_{n-1}^{2n}$  with $n$ even, and $H_n(M_k;\Z) \cong \Z^k$. After inverting the primes in $T_n$, and the prime $3$, there is a  fibration $S^{n-1}\to E_k\to  M_k$ such that $E_k$ is homotopy equivalent to $\#^{k-1} (S^{n}\times S^{2n-1})$. 
\end{theorem}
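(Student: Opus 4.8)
The plan is to carry out the two-step programme outlined at the end of Section~\ref{algres}: first realize the algebra map of Proposition~\ref{algmap} by a genuine map of spaces $f\colon E_k \to M_k$, and then identify its homotopy fibre with $S^{n-1}$ via a spectral sequence argument. Throughout we work in the localized category where the primes in $T_n\cup\{3\}$ are inverted, so in particular $\Omega S^n \simeq S^{n-1}\times \Omega S^{2n-1}$ and, after inverting $3$, $S^{n-1}$ carries a homotopy-associative multiplication; all homology is taken with $R_n[\tfrac13]$-coefficients.

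\textbf{Step 1: constructing $\Omega E_k \to \Omega M_k$.} By \cite{BaBa18}, both $\Omega E_k$ and $\Omega M_k$ decompose (in the localized category) as weak infinite products of loop spaces on spheres indexed by a Hall-type basis of the relevant free graded Lie algebras $L^W(V,\LL)$ and the free Lie algebra on the generators of $H_*\Omega E_k$. Proposition~\ref{algmap} gives an algebra map $\theta\colon H_*\Omega E_k \to H_*\Omega M_k$ sending $\tilde\mu_i\mapsto a_i$ and $\tilde\delta_i\mapsto w_i$ as in Proposition~\ref{alglem}. Using Proposition~\ref{alglem2}, the corresponding Lie-algebra map $F(v_1,\dots,v_{k-1},w_1,\dots,w_{k-1})/(\sum[v_i,w_i]) \cong L^W(V,\LL)$ is an isomorphism, so $\theta$ respects the loop-space product decompositions: each sphere factor $S^d$ in the decomposition of $\Omega E_k$ maps to the corresponding factor in $\Omega M_k$ by a map of spheres realizing the linear-algebra data (identity on the generating degrees, Whitehead-product-type classes on the brackets). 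Assembling these factorwise maps over the weak product gives a map $g\colon \Omega E_k \to \Omega M_k$ of $H$-spaces realizing $\theta$ on homology. This is where inverting $3$ is used: the homotopy-associative $H$-structure on $S^{n-1}$ is what lets us assemble the product consistently and pass to a delooping.

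\textbf{Step 2: delooping to $f\colon E_k \to M_k$.} Using the cell structure $E_k \simeq \big((S^n)^{\vee k-1}\vee(S^{2n-1})^{\vee k-1}\big)\cup_\phi \DD^{3n-1}$ with $\phi=\sum_i[\iota^n_i,\iota^{2n-1}_i]$, the adjoint of $g$ restricted to the wedge of spheres already specifies a map $(S^n)^{\vee k-1}\vee(S^{2n-1})^{\vee k-1}\to M_k$, namely $\mu_i\mapsto\tilde\alpha_i$ (for $i<k$) and $\delta_i\mapsto$ the map adjoint to $w_i$. To extend over the top cell we must check that the composite of $\phi$ with this map is null-homotopic in $\pi_{3n-2}(M_k)$; this follows because $g$ is a genuine map of loop spaces, so on $\pi_*$ (equivalently on loop homology, using relation~1 of Proposition~\ref{alglem}, $\sum[v_j,w_j]\equiv 0 \pmod{\LL}$) the relation $\sum[\tilde\mu_i,\tilde\delta_i]$ maps to zero in $H_*\Omega M_k$, and an obstruction-theory argument (the obstruction group $\pi_{3n-2}(M_k)$ in this range being controlled since $M_k$ has cells only in dimensions $0,n,2n$) promotes this to the vanishing of the extension obstruction, possibly after inverting finitely many more primes — but here precisely the primes in $T_n$ have already been inverted so that $\pi_{2n-1}(S^n)$, and hence the relevant part of $\pi_{3n-2}(M_k)$, is torsion-free of the expected form. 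This produces $f\colon E_k\to M_k$ with $\Omega f \simeq g$.

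\textbf{Step 3: identifying the fibre.} Let $F=\Fib(f)$. Since $f$ induces $\theta$ on loop homology, and $\theta$ is precisely the map corresponding under universal enveloping algebras to the inclusion $L^W(V,\LL)\hookrightarrow L(V,\LL)$, the Serre spectral sequence of $\Omega F \to \Omega E_k \to \Omega M_k$ (with the identification of $H_*\Omega E_k$ as a free module over $H_*\Omega M_k$ coming from the PBW/Poincar\'e-series computation $f_d = l_d$ for $d\neq n-1$ and $f_d=l_d-1$ for $d=n-1$ in Proposition~\ref{alglem2}) forces $H_*\Omega F$ to be a polynomial-type algebra on a single generator in degree $n-1$, i.e. $H_*\Omega F \cong H_*(\Omega S^n)$ as the discrepancy is concentrated in degree $n-1$. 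Combined with the fact that $F$ is $(n-2)$-connected (as $E_k\to M_k$ is $n$-connected on $\pi_*$) and the long exact sequence of the fibration $F\to E_k\to M_k$, one concludes $F$ is a homotopy $S^{n-1}$ in the localized category. Running the fibration backwards and using that $S^{n-1}\to E_k$ is null (since $E_k$ is $(n-1)$-connected) gives the stated fibration $S^{n-1}\to E_k\to M_k$ with $E_k\simeq\#^{k-1}(S^n\times S^{2n-1})$.

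\textbf{Main obstacle.} The genuinely delicate point is Step~2: passing from the homology-level/loop-space-level data to an actual delooped map $f\colon E_k\to M_k$, i.e. showing the single obstruction in $\pi_{3n-2}(M_k)$ to extending over the top cell vanishes. The algebra (Propositions~\ref{alglem}, \ref{alglem2}) guarantees the relation holds \emph{modulo $\LL$}, and the loop-space decomposition guarantees $g$ exists, but lifting through the delooping requires controlling an obstruction that a priori could be a nonzero torsion class; this is exactly why the primes in $T_n$ (and, for the homotopy-associativity needed to build $g$ in the first place, the prime $3$) must be inverted. The spectral sequence argument of Step~3, by contrast, is essentially formal once the module structure on $H_*\Omega E_k$ over $H_*\Omega M_k$ is pinned down by the Poincar\'e-series identity.
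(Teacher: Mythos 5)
Your overall two-step outline (build a map of spaces realizing the algebra map of Proposition~\ref{algmap}, then identify the fibre by a spectral sequence) matches the paper's strategy, but there is a genuine gap in your Step~2, and a related misdiagnosis of why the prime $3$ must be inverted.

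Concerning the obstruction: from the fact that $\sum[\tilde\mu_i,\tilde\delta_i]$ is sent to zero in $H_{3n-3}(\Omega M_k)$ you can only conclude that $f_\ast\bigl(\sum[\mu_i,\delta_i]\bigr)$ lies in $\Ker\bigl(\rho\colon \pi_{3n-2}(M_k)\to H_{3n-3}(\Omega M_k)\bigr)$, not that it is zero. In the range $3n-2$ the kernel of $\rho$ need not vanish even after inverting $T_n$, so "torsion-freeness of $\pi_{2n-1}(S^n)$" does not control the relevant part of $\pi_{3n-2}(M_k)$. The paper closes this gap by an explicit calculation: applying the graded Jacobi identity for Whitehead products to the specific formula $\beta_i = \sum_{j<k} g_{i,j}[\alpha_j,\alpha_k] + \tfrac12 g_{i,k}[\alpha_k,\alpha_k]$ gives
\[ \sum_{i=1}^{k-1}[\alpha_i,\beta_i] = [\bar{L}(M_k),\alpha_k] + \tfrac12 g_{k,k}\bigl[[\alpha_k,\alpha_k],\alpha_k\bigr], \]
where $\bar{L}(M_k)$ has the same image as $L(M_k)$ under $\rho$. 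Inverting $3$ kills the triple Whitehead product term (since $3[[\iota_n,\iota_n],\iota_n]=0$), and then the injectivity of $\rho$ on $\pi_{2n-1}\bigl((S^n)^{\vee k}\bigr)\otimes R_n$ (this is precisely where inverting $T_n$ is used) identifies $\bar{L}(M_k)$ with $L(M_k)$ after tensoring with $R_n$, so the expression collapses to $[L(M_k),\alpha_k]=0$ because $L(M_k)$ is the attaching map of the top cell and hence null in $M_k$. Your proof does not carry out this computation and instead gestures at an obstruction-theoretic promotion that is not justified.

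This also shows that the prime $3$ is not inverted in order to put a homotopy-associative $H$-structure on $S^{n-1}$. That associativity is used only later (Theorem~\ref{sphfibpbk}) to identify the fibration as a pullback of $E_2(S^{n-1})\to P_2(S^{n-1})$; in Theorem~\ref{sphfibloc3} the sole role of $3$ being invertible is to annihilate the triple Whitehead product $[[\alpha_k,\alpha_k],\alpha_k]$. Relatedly, Step~1 of your proof (first producing $g\colon \Omega E_k\to \Omega M_k$ as an $H$-map using the weak product decompositions) is not what the paper does and is not needed: the paper writes the map $f$ directly on the $(3n-2)$-skeleton by sending $\mu_i\mapsto\alpha_i$ and $\delta_i\mapsto\beta_i$, and all the real work is the top-cell extension above. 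Finally, your Step~3 uses the spectral sequence for $\Omega F\to \Omega E_k\to \Omega M_k$ to compute $H_\ast\Omega F$, and asserts $H_\ast\Omega F\cong H_\ast(\Omega S^n)$; this is off by one (the fibre is $S^{n-1}$, so its loop space is $\Omega S^{n-1}$, whose homology generator sits in degree $n-2$). The paper instead runs the spectral sequence for $\Omega M_k\to \Fib(f)\to E_k$ (Proposition~\ref{spseqarg}), exploiting the principal fibration structure via Moore's theorem to see that the $E^\infty$-page is $R_n\{\lambda_k\}$ concentrated in degree $n-1$, giving $H_\ast\Fib(f)\cong H_\ast S^{n-1}$ directly.
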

\begin{proof}
Our strategy is to construct a map $f: E_k \to M_k$ such that the homotopy fibre of $f$ is homotopy equivalent to $S^{n-1}$. We do this via the pushout description of $E_k$ in \eqref{connsumsph}. 

For $1\leq i \leq k-1$, we define $f$ on the $i^{th}$-factor $\mu_i$ as the map $\alpha_i : S^n \to M_k$. It follows that the class $\tilde{\mu_i}$ in loop space homology maps to the class $a_i$, as $\rho(\alpha_i)=a_i$. Let $\big((g_{i,j})\big)$ be as in Proposition \ref{alglem}. The $i^{th}$-factor $\delta_i$ is mapped by $f$ in accordance with \eqref{loopbi} to  
\begin{myeq}\label{betai}
\beta_{i} = \sum_{j=1}^{k-1} g_{i,j}[\alpha_{j}, \alpha_{k}] + \frac{1}{2}g_{i,k}[\alpha_{k}, \alpha_{k}] 
\end{myeq}
 which belongs to the image of 
\[\pi_{2n-1} \big( (S^n)^{\vee k} \big) \otimes R_n \to \pi_{2n-1}( M_k) \otimes R_n \]
as $2\in T_n$.  We write $\rho(\beta_i)=-b_i$, anf it follows that in loop space homology $\tilde{\delta_i}$ maps to the class $-b_i$. This defines 
\[ f : (S^n)^{\vee k-1} \bigvee (S^{2n-1})^{\vee k-1} \to M_k,
\]
and to extend $f$ all the way to $E_k$, we require to show that the attaching map of the $(3n-1)$-cell is mapped to $0$ by $f_\ast$. From Proposition \ref{alglem}, we have that 
\[(\Omega f)_\ast \Big(\sum_{i=1}^{k-1} [\tilde{\mu_i}, \tilde{\delta_i}] \Big) =- \sum_{i=1}^{k-1} [a_i,b_i] = 0 .\]
This implies 
\[ f_\ast  \Big(\sum_{i=1}^{k-1} [\mu_i, \delta_i] \Big) \in \Ker(\rho).\]
Recall the definition of $L(M_k)$ in \eqref{poM}. From the definition of $T_n$, we see that the map 
\[ \pi_{2n-1}\big((S^n)^{\vee k}\big) \otimes R_n \stackrel{\rho}{\to} H_{2n-2}\Big(\Omega\big((S^n)^{\vee k} )\big);R_n\Big) \cong T_{R_n}(a_1,\cdots,a_k)\]
is injective. We now write 
\[ \overline{L}(M_k)= \sum_{i<j} g_{i,j} [\alpha_{i}, \alpha_{j}] +\sum_{i}\frac{1}{2}g_{i,i}[\alpha_{i},\alpha_{i}], \]
and observe that $\rho(L(M_k))= \rho(\bar{L}(M_k))=-l(M_k)$. This follows from the equation $\rho([\gamma_1, \gamma_2]) = (-1)^{|\gamma_1|-1}[\rho(\gamma_1), \rho(\gamma_2)]$ \cite{Sam53}.  Moreover, we have the equation
\[
\begin{aligned} 
 f_\ast  \Big(\sum_{i=1}^{k-1} [\mu_i, \delta_i] \Big) &=  \sum_{i=1}^{k-1} [\alpha_i, \beta_i] \\
                                   & = [\bar{L}(M_k),\alpha_k] + \frac{1}{2}g_{k,k}[[\alpha_k,\alpha_k],\alpha_k]
\end{aligned}
\]
from a computation analogous to the proof of Proposition \ref{alglem} via the Jacobi identity for Whitehead products. As the prime $3$ is inverted, we also have that $[[\alpha_k,\alpha_k],\alpha_k]=0$. Now we apply the injectivity of $\rho$ after tensoring with $R_n$ to replace $\bar{L}(M_k)$ with $L(M_k)$ and obtain, 
\[  f_\ast  \Big(\sum_{i=1}^{k-1} [\mu_i, \delta_i] \Big) = [L(M_k), \alpha_k] = 0 \]
in $\pi_{3n-2}(M_k)\otimes R_n$. This constructs the map $f:E_k \to M_k$. 
The homotopy fibre $\Fib(f)$ of $f$ has the same homology as $S^{n-1}$ by the spectral sequence argument of Proposition \ref{spseqarg}, which completes the proof of the theorem as all the spaces are simply connected.
\end{proof}

We now show that inverting $3$ is not necessary in Theorem \ref{sphfibloc3}. We first consider the case $k=2$ in the following example. 

\begin{exam}\label{sphk=2}
Let $M_2\in \PP \DD_{n-1}^{2n}$ with $H_n(M)=\Z^2$. We wish to construct a fibration $S^{n-1} \to S^n\times S^{2n-1} \to M_2$ after inverting the primes in $T_n$. Following the proof of Theorem \ref{sphfibloc3}, we are able to do this if under a choice of basis of $H_2(M)$, $g_{2,2}$ is divisible by $3$. 
We know that the matrix $ \begin{bmatrix}
  g_{1,1} & g_{2,1} \\ 
  g_{1,2} & g_{2,2}  
\end{bmatrix} $  is symmetric and non-singular, and by the classification of such bilinear forms \cite[Theorem 2.2]{MiHu73}, we may reduce the matrix to one of 
\[ \begin{bmatrix}
1 & 0 \\ 
 0 & 1  
\end{bmatrix},  \begin{bmatrix}
  1 & 0 \\ 
  0 & -1  
\end{bmatrix} , \mbox{ or } \begin{bmatrix}
  0 & 1 \\ 
  1 & 0  
\end{bmatrix}.  \]
In the latter two cases, we may arrange for a basis such that $g_{2,2}$ is divisible by $3$ (by changing $\alpha_2$ to $\alpha_1 + \alpha_2$ in the second case).  In the situation of the first matrix, 
we note that there is a map $S^n \to M_2$ whose mapping cone has cohomology $\Z[x]/(x^3)$ with $|x|=n$. Then, $n$ must be one of $2$, $4$, or $8$. For the generator $\iota_2$ of $\pi_2 S^2$, we have $\big[ [\iota_2, \iota_2],\iota_2\big]=0$, and for $n=4$ or $8$, we have $3\in T_n$. Thus, we always have the required fibration when $k=2$. 
\end{exam}
\vspace{0.5cm}

The case of Poincar\'{e} duality complexes $M_k$, when $k\geq 3$, is implied by the following lemma. 
\begin{lemma}\label{quadform3}
a) Let $\langle -, - \rangle$ be a symmetric bilinear form over $\Z$ of rank $\geq 3$. Then, there is a primitive $v \neq 0$ such that $\langle v,v\rangle$ is divisible by $3$. \\
b) Let $\langle -, - \rangle$ be a symmetric bilinear form over $\Z$ of rank $\geq 5$. Then, there is a primitive $v \neq 0$ such that $\langle v,v\rangle$ is divisible by $8$. \\
\end{lemma}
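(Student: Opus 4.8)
The plan is to reduce both parts to statements about quadratic forms over the finite rings $\Z/3$ and $\Z/8$, and then invoke the classification of non-degenerate symmetric bilinear forms over these rings together with a primitivity adjustment. For part (a): reduce the form $\langle -,-\rangle$ modulo $3$ to obtain a symmetric bilinear form $q$ over $\F_3$ of rank $\geq 3$. It suffices to find $\bar v \neq 0$ in $(\Z/3)^k$ with $q(\bar v,\bar v)=0$, because any such $\bar v$ lifts to a primitive integral vector $v$ (choose a lift whose entries are not all divisible by $3$; since $\bar v\neq 0$ this is automatic, and primitivity can be further arranged by dividing out any common factor, which does not change the residue mod $3$ up to a unit). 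So the heart of (a) is: \emph{every non-degenerate quadratic form over $\F_3$ in $\geq 3$ variables is isotropic}. This is the standard Chevalley--Warning-type fact — a non-degenerate quadratic form in $\geq 3$ variables over any finite field is isotropic — and I would either cite it or give the two-line counting argument (the two conics $q(x)=a$ and $\langle v_3,v_3\rangle$-slice meet because $|\F_3|$ is too small to separate the relevant value sets). If the form is degenerate mod $3$, any nonzero vector in the radical already works.

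For part (b), the target modulus is $8$, which is not a field, so the argument is slightly more delicate. I would work over $\Z/8$ (or equivalently use the $2$-adic classification of symmetric bilinear forms, e.g. from \cite{MiHu73} or Cassels/Serre). A non-degenerate symmetric bilinear form over $\Z_2$ of rank $\geq 5$ decomposes as an orthogonal sum of rank-one pieces $\langle u \rangle$ with $u$ a $2$-adic unit, and rank-two hyperbolic-type pieces $\begin{bmatrix} 0 & 1 \\ 1 & 0\end{bmatrix}$ and $\begin{bmatrix} 2 & 1 \\ 1 & 2\end{bmatrix}$. For the even (rank-two) pieces one gets vectors with $\langle v,v\rangle \equiv 0 \pmod 8$ immediately (e.g. $(1,0)$ in the first, or a suitable combination in the second). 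For a sum of rank-one pieces $\langle u_1\rangle \perp \cdots \perp \langle u_m\rangle$ with $m\geq 5$, I would argue combinatorially: each $u_i^2$ is an odd square, hence $\equiv 1 \pmod 8$, so taking $v$ to be a $0/1$-combination of the basis vectors, $\langle v,v\rangle \equiv (\text{number of chosen indices}) \pmod 8$, and choosing exactly $8$ of them — possible once the rank is large enough — gives $\langle v,v\rangle \equiv 0$. For the borderline ranks $5,6,7$ one must allow entries $\pm 1, \pm 2$ and solve $\sum \epsilon_i u_i \equiv 0 \pmod 8$ with $\epsilon_i \in \{0,\pm1,\pm 2\}$; with $5$ odd units the value set of such combinations is large enough to hit $0 \bmod 8$ — this is a finite check once the form is put in normal form. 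Primitivity is again handled by dividing out any common factor of the chosen lift.

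The main obstacle is part (b) at the smallest ranks: unlike the $\F_3$ case, there is no clean one-line isotropy statement mod $8$, and one genuinely has to use the classification of $2$-adic forms and run a short case analysis over the finitely many normal forms of rank $5$ (sums of $\langle \pm 1\rangle$, $\langle \pm 2\rangle$ — wait, $2$ is not a unit, so rather $\langle u\rangle$ for $u\in\{1,3,5,7\}$ — possibly with hyperbolic summands). I would organize it as: (i) if any even binary summand occurs, done directly; (ii) otherwise the form is diagonal over $\Z_2$ with odd-unit entries, and a vector with exactly the right number of nonzero entries and signs produces a sum $\equiv 0 \pmod 8$, using that $\{1,3,5,7\}^2 \equiv 1 \pmod 8$. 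Throughout I would be careful that "primitive over $\Z$" is the real requirement, which just means: take the constructed vector, divide by the gcd of its coordinates, and note this preserves divisibility of $\langle v,v\rangle$ by $3$ (resp. $8$) up to squares of units, hence preserves it outright since $3$ and $8$ are what we care about and the gcd is coprime to them after reduction.
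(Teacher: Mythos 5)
Part (a) is correct and matches the paper's own argument: both you and the paper reduce the form modulo $3$, diagonalize over $\F_3$, use the isotropy of a non-degenerate quadratic form in $\geq 3$ variables over $\F_3$ (the paper checks this by hand with diagonal entries $\pm 1$, you invoke the general finite-field fact), and lift a nonzero isotropic vector to a primitive integral one.

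For part (b) the paper gives no argument of its own (it defers entirely to Milnor--Husemoller), so your $\Z_2$-Jordan-decomposition plan is genuinely filling a gap, but two steps are not right as written. First, the even binary block $\left(\begin{smallmatrix}2&1\\1&2\end{smallmatrix}\right)$ does \emph{not} primitively represent any multiple of $8$: its primitive values are $2(a^2+ab+b^2)$ with $\gcd(a,b)=1$, and $a^2+ab+b^2$ is then odd, so the values are $\equiv 2$ or $6\pmod 8$. Hence ``a suitable combination in the second'' fails. The repair exists but must be stated: an even form over $\Z_2$ has even rank, so rank $\geq 5$ forces rank $\geq 6$, and $\left(\begin{smallmatrix}2&1\\1&2\end{smallmatrix}\right)^{\perp 2}\cong \left(\begin{smallmatrix}0&1\\1&0\end{smallmatrix}\right)^{\perp 2}$ over $\Z_2$, so a hyperbolic summand is always present and $(1,0)$ there finishes it. Second, the ``choose exactly $8$ indices'' shortcut in the diagonal case requires rank $\geq 8$; at ranks $5$--$7$ you genuinely have to solve $\sum u_i v_i^2\equiv 0\pmod 8$ with contributions $u_i v_i^2\in\{0,u_i,4\}\pmod 8$ (i.e.\ $v_i\in\{0,1,2\}$) subject to at least one $v_i$ odd for primitivity, over all $(u_1,\dots,u_5)\bmod 8$ up to symmetry. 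This does work out (e.g.\ $\langle 1,1,1,1,1\rangle$: $v=(1,1,1,1,2)$ gives $8$; $\langle 1,3,3,3,3\rangle$: $v=(2,1,1,1,1)$ gives $16$), but it is a real finite check, not an afterthought, and the bound $5$ is sharp for this route: over $\langle 1,1,1,1\rangle$ the primitive norms hit every residue mod $8$ except $0$.
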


\begin{proof}
This argument basically follows from \cite[Ch.II, (3.2)-(3.4)]{MiHu73}. We only demonstrate how the prime $3$ argument translates to this case. Diagonalize the form over the field $\F_3$ to one which possesses diagonal entries $d_1,\cdots, d_k$ with $d_i = \pm 1$ or $0$. If all the $d_i$ are $\pm 1$, clearly 
\[ \sum_{i=1}^k d_i x_i^2 = 0 \]
has a non-zero solution $\underline{x}$ if $k\geq 3$. There exists a non-zero primitive $v$ such that $v$ reduces to $\underline{x}$ modulo $3$.  
\end{proof}

Example \ref{sphk=2} and Lemma \ref{quadform3} allow us to choose $\alpha_k$ such that $g_{k,k}$ is divisible by $3$. Thus, we conclude the proof of the following Theorem. 
\begin{theorem}\label{sphfibloc}
For $k\geq 2$, let $M_k\in \PP \DD_{n-1}^{2n}$  with $n$ even and $H_n(M;\Z) \cong \Z^k$. After inverting the primes in $T_n$, there is a  fibration $S^{n-1}\to E\to  M$ such that $E$ is homotopy equivalent to $\#^{k-1} (S^{n}\times S^{2n-1})$. 
\end{theorem}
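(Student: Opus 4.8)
The plan is to reduce Theorem~\ref{sphfibloc} to Theorem~\ref{sphfibloc3} by showing that the hypothesis ``the prime $3$ is inverted'' can be dropped, at the cost only of choosing the cell structure of $M_k$ cleverly. Reading the proof of Theorem~\ref{sphfibloc3}, the prime $3$ was used in exactly one place: to kill the term $\tfrac12 g_{k,k}[[\alpha_k,\alpha_k],\alpha_k]$ in the computation of $f_\ast\big(\sum_i[\mu_i,\delta_i]\big)$. So it suffices to arrange, by a change of basis of $H_n(M_k)\cong\Z^k$, that the diagonal coefficient $g_{k,k}$ of the inverse intersection form is divisible by $3$ (since $[[\alpha_k,\alpha_k],\alpha_k]$ is a $3$-torsion class in the relevant range when $n$ is even and $n\neq 2,4,8$; and when $n=2$ it vanishes, while for $n=4,8$ one has $3\in T_n$ anyway). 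Everything else in the argument of Theorem~\ref{sphfibloc3} — the construction of $f$ on the wedge, the vanishing of the top attaching map, and the spectral sequence identification of $\Fib(f)\simeq S^{n-1}$ via Proposition~\ref{spseqarg} — goes through verbatim once this divisibility is secured.

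First I would dispose of the case $k=2$ using Example~\ref{sphk=2}: by the classification of non-singular symmetric bilinear forms over $\Z$ of rank $2$ (hyperbolic or $\langle\pm1\rangle\oplus\langle\pm1\rangle$), either one can change basis so that $g_{2,2}$ is divisible by $3$, or the form is $\langle 1\rangle\oplus\langle1\rangle$, in which case the existence of a degree-one map $S^n\to M_2$ with mapping cone $\Z[x]/(x^3)$ forces $n\in\{2,4,8\}$ by Adams' Hopf invariant one theorem, and then the obstruction class vanishes or $3\in T_n$. Then for $k\geq 3$ I would invoke Lemma~\ref{quadform3}(a): any symmetric bilinear form over $\Z$ of rank $\geq 3$ admits a primitive vector $v$ with $\langle v,v\rangle\equiv 0\pmod 3$. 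Applying this to the intersection form of $M_k$ (equivalently, by taking the inverse, to the matrix $((g_{i,j}))$, which is the inverse of the intersection form and hence equally a non-singular symmetric form over $\Z$ of rank $k$), I get a primitive vector which I extend to a basis with this vector as $\alpha_k$; in this basis $g_{k,k}$ is divisible by $3$.

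With such a basis fixed, I would re-run the proof of Theorem~\ref{sphfibloc3} word for word. The only modified line is the computation
\[
 f_\ast\Big(\sum_{i=1}^{k-1}[\mu_i,\delta_i]\Big)=[\bar L(M_k),\alpha_k]+\tfrac12 g_{k,k}[[\alpha_k,\alpha_k],\alpha_k],
\]
where now the second term vanishes because $g_{k,k}$ is divisible by $3$ and $[[\alpha_k,\alpha_k],\alpha_k]$ is annihilated by $3$ (or is already zero) in $\pi_{3n-2}(M_k)\otimes R_n$ — note we do \emph{not} need to invert $3$, only to use that the coefficient already carries the factor $3$. The injectivity of $\rho$ after tensoring with $R_n$ then lets me replace $\bar L(M_k)$ by $L(M_k)$ and conclude $f_\ast\big(\sum_i[\mu_i,\delta_i]\big)=[L(M_k),\alpha_k]=0$, so $f$ extends over the top cell of $E_k$. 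Proposition~\ref{spseqarg} then gives $H_\ast\Fib(f)\cong H_\ast S^{n-1}$, and since all spaces in sight are simply connected this upgrades to a homotopy equivalence $\Fib(f)\simeq S^{n-1}$, yielding the fibration $S^{n-1}\to E_k\to M_k$.

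The main obstacle is the change-of-basis step — specifically making sure that the number-theoretic input (Lemma~\ref{quadform3}, resting on the fact that an isotropic vector exists for a diagonalized form over $\F_3$ once the rank is at least $3$, or an exceptional low-rank analysis when $k=2$) really does handle \emph{every} non-singular symmetric integral form, and that when no such change of basis exists the dimensional constraint $n\in\{2,4,8\}$ kicks in to save the day. This is exactly the content already packaged in Example~\ref{sphk=2} and Lemma~\ref{quadform3}, so the remaining work is just to observe that these two results together exhaust all cases and to note that removing the $3$-inversion does not disturb the rest of the argument of Theorem~\ref{sphfibloc3}.
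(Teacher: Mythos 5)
Your proposal is correct and follows essentially the same route as the paper: the paper's own proof of Theorem~\ref{sphfibloc} is precisely the observation that Example~\ref{sphk=2} (for $k=2$) and Lemma~\ref{quadform3}(a) (for $k\geq 3$) permit a change of basis making $g_{k,k}$ divisible by $3$, after which the argument of Theorem~\ref{sphfibloc3} applies verbatim without inverting $3$. Your more detailed unpacking of why the offending term $\tfrac12 g_{k,k}[[\alpha_k,\alpha_k],\alpha_k]$ then vanishes (using that $[[\alpha_k,\alpha_k],\alpha_k]$ is $3$-torsion, or that $n\in\{2,4,8\}$ is forced in the exceptional case) is exactly what the paper intends.
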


\begin{mysubsection}{A spectral sequence argument}
We recall a well known fact that all the classes in $H_\ast E$ are transgressive in the (homology) Serre spectral sequence for $\Omega E \to PE \to E$ if the space $E$ is a suspension. 
\begin{lemma}\label{susspecseq}
    Suppose $E\simeq \Sigma X$. Then, in the Serre spectral sequence for the path$\operatorname{-}$space fibration $\Omega E \rightarrow PE \rightarrow E$, all elements  are transgressive. 
\end{lemma}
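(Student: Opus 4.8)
The plan is to recast the statement as the surjectivity of the homology suspension and then to establish that surjectivity directly from the loop--suspension adjunction.

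\textit{Step 1: reduction.} I would first recall the classical description of transgression in the path--loop fibration: a class in $H_n(E)$ is transgressive in the Serre spectral sequence of $\Omega E \to PE \to E$ precisely when it lies in the image of the homology suspension $\sigma\colon \widetilde H_{n-1}(\Omega E)\to \widetilde H_n(E)$. Concretely: the bottom row of the spectral sequence admits no incoming differentials, so the transgressive classes form exactly the subgroup $E^{n}_{n,0}\subseteq H_n(E)$; contractibility of $PE$ forces the transgression $d_n\colon E^{n}_{n,0}\to E^{n}_{0,n-1}$ to be an isomorphism onto the quotient $E^{n}_{0,n-1}$ of $H_{n-1}(\Omega E)$; and unwinding definitions shows that the resulting composite $H_{n-1}(\Omega E)\twoheadrightarrow E^{n}_{0,n-1}\xrightarrow{\cong}E^{n}_{n,0}\hookrightarrow H_n(E)$ equals $\pm\sigma$. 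So it is enough to show that $\sigma$ is surjective whenever $E\simeq\Sigma X$.

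\textit{Step 2: surjectivity for suspensions.} Write $s$ for the natural suspension isomorphism $\widetilde H_{\ast-1}(Y)\xrightarrow{\cong}\widetilde H_{\ast}(\Sigma Y)$, and recall the standard description of the homology suspension as $\sigma = e_\ast\circ s$, where $e\colon\Sigma\Omega E\to E$ is the counit of the adjunction. Put $E=\Sigma X$ and let $\eta\colon X\to\Omega\Sigma X=\Omega E$ be the unit. Then naturality of $s$ and the triangle identity $e\circ\Sigma\eta\simeq\mathrm{id}_E$ give
\[ \sigma\circ\eta_\ast \;=\; e_\ast\circ(\Sigma\eta)_\ast\circ s \;=\; (e\circ\Sigma\eta)_\ast\circ s \;=\; s\colon \widetilde H_{n-1}(X)\xrightarrow{\cong}\widetilde H_n(E). \]
Thus $\sigma\circ\eta_\ast$ is an isomorphism onto $\widetilde H_n(E)$, so $\sigma$ is surjective; by Step 1 every class of $H_\ast(E)$ is transgressive.

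The argument is entirely formal. The one place needing care is the bookkeeping in Step 1 --- matching the abstract transgression with the homology suspension, keeping track of reduced versus unreduced homology and of the quotient $E^{n}_{0,n-1}$ --- which is classical and which I would cite rather than reprove. I would also point out that one takes $E$ simply connected, equivalently $X$ connected, so that the spectral sequence has untwisted coefficients; this is automatic in all of the paper's applications, where $X$ is a wedge of spheres of dimension $\geq n-1$.
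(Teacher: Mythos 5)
Your argument is essentially the paper's: the paper cites the two facts that (i) the image of the homology suspension $H_\ast(\Sigma\Omega E)\to H_\ast(E)$ consists of transgressive elements, and (ii) $\Sigma X$ is a retract of $\Sigma\Omega\Sigma X$, and your Step~1 is fact (i) while your Step~2, via the triangle identity $e\circ\Sigma\eta\simeq\mathrm{id}$, is exactly the retraction of fact (ii). You have merely spelled out the bookkeeping that the paper leaves implicit.
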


\begin{proof}
This follows from the fact that the elements in the image of the homology suspension $H_\ast \Sigma \Omega E \to H_\ast E$ are transgressive, and that $\Sigma X$ is a retract of $\Sigma \Omega \Sigma X$. 
\end{proof}

We now note down the set up in which the spectral sequence argument is carried out. We are working with a localized category of spaces in which the primes in $T_n$ are inverted, and the homology is computed with $R_n$-coefficients. The space $M_k\in \PP \DD_{n-1}^{2n}$  satisfies $H_n(M_k)\cong \Z^k$. We assume that $E_k\simeq \#^{k-1} (S^n \times S^{2n-1})$, and that $f : E_k\to M_k$ is a map which satisfies 
\begin{enumerate}
 \item $f_{*}\colon H_{n}(E_k)\to H_{n}(M_k)$ is injective, and $H_{n}M_k\cong f_\ast (H_{n}E_k)\oplus R_n\{\lambda_{k}\}$ for some $\lambda_{k}\in H_{n}M_k.$
 \item The map $ \pi_{2n-1}E_k \stackrel{f_\ast}{\to} \pi_{2n-1}M_k \stackrel{\rho}{\to} H_{2n-2}(\Omega M_k)$ induces an isomorphism  onto the quotient
\[H_{2n-2}(\Omega M_k)/\Big(\Im\big( \pi_{n}E_k \stackrel{\rho\circ f_\ast}{\to} H_{n-1}(\Omega M_k)  \big) \cdot H_{n-1}(\Omega M_k)\Big) ,\]
 where the product stands for the Pontrjagin product of $H_\ast \Omega M_k$. 
\end{enumerate}

\begin{prop} \label{spseqarg}
   With notations as above, let $\Fib(f)$ be the homotopy fibre of the map $f$. Then, $H_\ast \big(\Fib(f)\big) \cong H_\ast \big(S^{n-1}\big)$. 
\end{prop}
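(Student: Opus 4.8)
The plan is to run the homology Serre spectral sequence for the fibration $\Fib(f) \to E_k \to M_k$, using what we know about $H_*E_k$, $H_*M_k$ and the induced maps. Since $M_k$ is $(n-1)$-connected and $E_k$ is $(n-1)$-connected with $H_*E_k$ concentrated in degrees $0, n, 2n-1, 3n-1$, the fibre $F = \Fib(f)$ is at least $(n-2)$-connected, and one expects $H_*F \cong H_*S^{n-1}$, i.e. $H_{n-1}F \cong R_n$ and all higher reduced homology vanishing. First I would set $F$ to be $(n-2)$-connected and let $t \in H_{n-1}(F)$ be a generator of the bottom nonvanishing group (a priori $H_{n-1}(F)$ could be larger than $R_n$, so I must also pin down its rank). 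The key input is condition (1): $f_* : H_n(E_k) \to H_n(M_k)$ is injective with cokernel a free rank-one module $R_n\{\lambda_k\}$. In the spectral sequence $E^2_{p,q} = H_p(M_k; H_q(F))$, the only way the class $\lambda_k \in H_n(M_k) = E^2_{n,0}$ can die is via a differential $d_n : E^n_{n,0} \to E^n_{n-1, \ldots}$? No — differentials go the other way; $\lambda_k$ must be hit by $d^n$ from $E^n_{0, n-1} = H_{n-1}(F)$, forcing $H_{n-1}(F)$ to surject onto $R_n\{\lambda_k\}$, hence to be nonzero, and the transgression $\tau(t)$ to be (a unit multiple of) $\lambda_k$ up to the image of $f_*$. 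This is essentially the Wang/transgression mechanism and it identifies $H_{n-1}(F)$ as exactly $R_n$: if it were larger, $H_n(E_k)$ would be too small relative to $H_n(M_k)$ after taking the $E^\infty$ page.

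Next I would bootstrap to the loop space. Since everything in sight is simply connected and we want $H_*F \cong H_*S^{n-1}$ with $n-1$ odd (recall $n$ is even), it suffices by standard arguments to show $H_*(\Omega F) \cong H_*(\Omega S^{n-1})$, which is a divided power / polynomial algebra on one generator in degree $n-2$; equivalently the loop space fibration $\Omega F \to \Omega E_k \to \Omega M_k$ should exhibit $H_*\Omega E_k$ as a free module over $H_*\Omega M_k$... actually the cleaner route: apply Lemma~\ref{susspecseq}, since $E_k \simeq \#^{k-1}(S^n \times S^{2n-1})$ is a suspension, so in the path–loop fibration for $E_k$ all classes transgress, and combine with the known algebra structures \eqref{loophomM}, \eqref{loophomsphprod}. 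Concretely, I would compare the Serre spectral sequences of $\Omega E_k \to PE_k \to E_k$ and $\Omega M_k \to PM_k \to M_k$ via the map $f$, using that $\Omega f : H_*\Omega E_k = T(\tilde\mu_i,\tilde\delta_i)/(\sum[\tilde\mu_i,\tilde\delta_i]) \to H_*\Omega M_k = T(a_i)/(l(M_k))$ is, by Proposition~\ref{algmap} and Proposition~\ref{alglem2}, the map induced by a surjection of the corresponding quadratic Lie algebras $\FF \twoheadrightarrow L^W(V,\LL)$ whose kernel is the ideal generated by $\sum[v_i,w_i]$ — and Proposition~\ref{alglem2} tells us this kernel, as a Lie algebra, contributes exactly one generator in degree $n-1$. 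Thus on Poincaré series, $H_*\Omega E_k / (\text{left ideal generated by } \Im \rho f_*) $ has the Poincaré series of $H_*\Omega S^{n-1} \otimes H_*\Omega M_k$ divided appropriately, forcing $H_*\Omega F$ to have the Poincaré series of $H_*\Omega S^{n-1}$, hence $H_*F \cong H_*S^{n-1}$.

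The cleanest packaging uses condition (2) directly: it says $\rho f_*$ hits $H_{2n-2}\Omega M_k$ modulo the left ideal generated by $a_1,\dots,a_{k-1}$, i.e. the relation $l(M_k)$ in $H_*\Omega M_k$ is "visible" coming from $E_k$, and the surjectivity in Proposition~\ref{alglem2} is exactly the statement that $H_*\Omega E_k \to H_*\Omega M_k$ is surjective with kernel generated (as a two-sided ideal, via the Lie-ideal description) by one class in degree $n-1$ whose image transgresses to $\lambda_k$. So I would: (i) show $\Omega f$ is surjective on homology, so the Serre SS of $\Omega F \to \Omega E_k \to \Omega M_k$ collapses and $H_*\Omega E_k \cong H_*\Omega M_k \otimes H_*\Omega F$ as modules, by the Leray–Hirsch / Moore comparison; (ii) read off the Poincaré series of $H_*\Omega F$ from the two computed algebras and the factorization of Poincaré series in the proof of Proposition~\ref{alglem2} — namely $f_d = l_d$ except $f_{n-1} = l_{n-1}-1$ — to get $P(H_*\Omega F) = \prod(1-t^{n-2})^{-1}$-type series matching $\Omega S^{n-1}$; (iii) deduce $H_*F \cong H_*S^{n-1}$.

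The main obstacle I anticipate is step (i): establishing that $\Omega f_*$ is surjective (equivalently that the module $H_*\Omega E_k$ is free, or at least flat, over $H_*\Omega M_k$, so the Eilenberg–Moore or Serre spectral sequence for the loop fibration degenerates cleanly) rather than merely an algebra map — this is where Proposition~\ref{alglem2}'s surjectivity and the freeness-in-each-degree statements from Proposition~\ref{grinj} and Theorem~\ref{PBW} have to be leveraged carefully, together with the fact that a surjection of connected graded algebras which is "quadratic with a Lie-ideal kernel" behaves well under taking the associated fibre. Once the module structure $H_*\Omega E_k \cong H_*\Omega M_k \otimes H_*\Omega F$ is in hand, the Poincaré series bookkeeping is exactly the computation already carried out in the proof of Proposition~\ref{alglem2}, and the identification $H_*F \cong H_*S^{n-1}$ follows formally since both are simply connected (indeed $(n-2)$-connected) spaces with the same loop space homology Poincaré series and the bottom class transgressing to a generator of the cokernel $R_n\{\lambda_k\}$.
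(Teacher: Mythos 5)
Your first-paragraph spectral sequence for $\Fib(f)\to E_k \to M_k$ does correctly extract $H_{n-1}\Fib(f)\cong R_n$ from hypothesis (1), but cannot by itself determine the higher homology: the $E^2$-page $H_p(M_k)\otimes H_q(\Fib(f))$ is populated by the very groups $H_q(\Fib(f))$ you are trying to compute, and hypothesis (2), which involves the Pontrjagin product, makes no appearance in this spectral sequence. The genuine gap is a directional error in your loop-space step. You propose to show $\Omega f$ surjective on homology and degenerate the Serre spectral sequence of $\Omega \Fib(f)\to\Omega E_k\to\Omega M_k$. But $(\Omega f)_*\colon H_*\Omega E_k\to H_*\Omega M_k$ is \emph{injective}, not surjective: Proposition~\ref{alglem2} identifies $H_*\Omega E_k$ with the universal enveloping algebra of the Lie \emph{subalgebra} $L^W(V,l(M_k))\subset L(V,l(M_k))$, and after PBW this is a split inclusion missing exactly one degree-$(n-1)$ generator. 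If your degeneration held, the relation $P_{\Omega E_k}=P_{\Omega M_k}\cdot P_{\Omega\Fib(f)}$ would force $P_{\Omega\Fib(f)}(t)=1/(1+t^{n-1})$, which has negative coefficients and is absurd.

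The correct move is to continue the Puppe sequence one step and look at $\Omega E_k\to\Omega M_k\to\Fib(f)$. The split injectivity of $(\Omega f)_*$ dualises via universal coefficients to a surjection $H^*\Omega M_k\twoheadrightarrow H^*\Omega E_k$, so Leray-Hirsch degenerates \emph{this} spectral sequence, and the relation $P_{\Omega M_k}=P_{\Fib(f)}\cdot P_{\Omega E_k}$ gives $P_{\Fib(f)}(t)=1+t^{n-1}$ directly---note this computes $H_*\Fib(f)$, not $H_*\Omega\Fib(f)$, which is what you actually want. The paper's proof additionally runs the spectral sequence for the \emph{principal} fibration $\Omega M_k\to\Fib(f)\to E_k$, using Moore's theorem that the differentials are maps of $H_*\Omega M_k$-modules; this is where hypothesis (2) enters, to show that the images of $d^n$ and $d^{2n-1}$ in the $0^{th}$ column leave a cokernel $R_n\{\lambda_k\}$ of rank one concentrated in degree $n-1$. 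Your proposal never mentions either of these two fibrations or the module structure on the differentials, and the surjectivity premise on $\Omega f$ cannot be fixed without reversing the whole architecture of the argument.
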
 
 \begin{proof}
We compute  the homology Serre spectral sequence for the fibration $\Omega M_k \to \Fib(f) \to E_k$  whose $E^{2}$-page is given by 
\begin{myeq}\label{mainspseq}
E_{p,q}^{2}= H_{p}(E_k)\otimes H_{q}(\Omega M_k)\Rightarrow H_{p+q}(\Fib(f)).
\end{myeq}
We may also continue the fibration sequence further to obtain the fibration $\Omega E_k \to \Omega M_k \to \Fib(f)$. The homology of $\Omega E_k$ is described in \eqref{loophomsphprod} and the homology of $\Omega M_k$ is described in \eqref{loophomM}. These are given by
\[   H_\ast(\Omega M_k) \cong T(a_1, \cdots, a_k)/\big(l(M_k)\big), ~ H_\ast \Omega E_k \cong T(\tilde{\mu_1}, \tilde{\delta_1}, \cdots, \tilde{\mu}_{k-1}, \tilde{\delta}_{k-1})/\Big(\sum_{i=1}^{k-1} [\tilde{\mu_i}, \tilde{\delta_i}]\Big). \] 
Note that \cite[Theorem 2.8]{BaBa18} implies that both $H_\ast(\Omega M_k)$ and $H_\ast(\Omega E_k)$ are torsion-free. Let $L(a_1,\cdots, a_k)$ be the free Lie algebra on $a_1,\cdots , a_k$. We also note from \cite[Proposition 2.11]{BaBa18} that the universal enveloping algebras are computed as  
\[U\big(L(a_1,\cdots,a_k)/(l(M_k))\big) \cong  T(a_1, \cdots, a_k)/\big(l(M_k)\big),\]
\[ U\Big(L(\tilde{\mu_1}, \tilde{\delta_1}, \cdots, \tilde{\mu}_{k-1}, \tilde{\delta}_{k-1})/\big(\sum_{i=1}^{k-1} [\tilde{\mu_i}, \tilde{\delta_i}]\big)\Big) \cong T(\tilde{\mu_1}, \tilde{\delta_1}, \cdots, \tilde{\mu}_{k-1}, \tilde{\delta}_{k-1})/\big(\sum_{i=1}^{k-1} [\tilde{\mu_i}, \tilde{\delta_i}]\big).\]
Denote by $\iota$ the map from a lie algebra $L$ to it's universal enveloping algebra $U(L)$. We now apply Proposition \ref{alglem2} writing $W=(\Omega f)_\ast(H_{n-1}(\Omega E_k) ) \subset H_{n-1}(\Omega M_k) = V$. Note the commutative diagram below of graded Lie algebras and their universal enveloping algebras.
\[
\xymatrix{L(\tilde{\mu_1}, \tilde{\delta_1}, \cdots, \tilde{\mu}_{k-1}, \tilde{\delta}_{k-1})/\big(\sum_{i=1}^{k-1} [\tilde{\mu_i}, \tilde{\delta_i}]\big)  \ar[d]^{\cong}_{f_\ast} \ar[rr]^{\iota} && H_\ast(\Omega E_k) \ar[dd]^{(\Omega f)_\ast} \\ 
L^W(V,l(M_k)) \ar[r]^{\subset} \ar[d]^{\iota} & L(V,l(M_k)) \ar[d]^{\iota}  \\
U(L^W(V,l(M_k)) \ar[r]                                & U(L(V,l(M_k))) \ar[r]^{\cong} & H_\ast(\Omega M_k) }
\]
The top left vertical arrow is an isomorphism by Proposition \ref{alglem2}. This diagram allows us to identify $H_\ast(\Omega E_k)\to H_\ast(\Omega M_k)$ as $U(\hat{f})$ where $\hat{f}$ is the inclusion 
\[L^W(V,l(M_k)) \to L(V,l(M_k)) .\]
By the Poincar\'{e}-Birkhoff-Witt theorem for graded Lie algebras stated in Theorem \ref{PBW}, we have that $(\Omega f)_\ast : H_\ast \Omega E_k \to H_\ast \Omega M_k$ is injective, and in each degree, it is a torsion-free summand of a torsion-free Abelian group. Now the universal coefficient theorem implies that $(\Omega f)^\ast : H^\ast(\Omega M_k) \to H^\ast(\Omega E_k)$ is surjective. The Leray-Hirsch theorem now implies that the cohomology spectral sequence for the fibration $\Omega E_k \to \Omega M_k \to \Fib(f)$ degenerates at the second page. The same result now holds for the homology spectral sequence. As a consequence, we have that the map $H_\ast \Omega M_k \to H_\ast \Fib(f)$ is surjective. 

Now we turn our attention to the spectral sequence for the fibration $\Omega M_k \to \Fib(f) \to E_k$ \eqref{mainspseq}. As the map $H_\ast (\Omega M_k) \to H_\ast \Fib(f)$ is surjective, the $E^\infty$-page is concentrated in the $0^{th}$-column. We now calculate all the differentials that hit the $0^{th}$-column and compute the relevant cokernels. 

Consider the commutative diagram 
\[\xymatrix{ PE_k \ar[r]^{P(f)} \ar[d] & PM_k \ar[d] \\
 E_k \ar[r]^{f} & M_k. }\] 
which implies the map $PE_k \to \Fib(f)$, as $\Fib(f)$ is the homotopy pullback of $E_k \to M_k \leftarrow PM_k$. The differentials are computed via the following commutative diagram of fibrations. 
 $$\xymatrix{
\Omega \overline{E_k} \ar[r] \ar[d] & \Omega E_k \ar[d] \ar[r]^{\Omega f} & \Omega M_k  \ar[d]   \\
P\overline{E_k} \ar[r] \ar[d]  &  P E_k \ar[r] \ar[d]  & \Fib(f)  \ar[d] \\
\overline{E_k} \ar[r]  & E_k \ar[r]^{f} & M_k.}$$ 
The space $\overline{E_k}$ is the $(3n-2)$-skeleton of $E_k$ which is homotopy equivalent to $(S^n)^{\vee k-1} \vee (S^{2n-1})^{\vee k-1}$. In  the Serre spectral sequence 
$$E_{p,q}^{2}= H_{p}(\overline{E_k})\otimes H_{q}(\Omega \overline{E_k})\Rightarrow H_{p+q}(P\overline{E_k}),$$ 
the classes $\mu_i$ are transgressive by Lemma \ref{susspecseq}. It follows that in the spectral sequence 
$$E_{p,q}^{2}= H_{p}(E_k)\otimes H_{q}(\Omega E_k)\Rightarrow H_{p+q}(PE_k),$$
the classes $\mu_i$ transgress to $\tilde{\mu_i}$ and the classes $\delta_i$ transgress to $\tilde{\delta_i}$. Therefore, in the spectral sequence \eqref{mainspseq}, we have the formulas 
\[ d^n(\mu_i)= \Omega f_\ast(\tilde{\mu_i}), ~ d^{2n-1}(\delta_i)= \Omega f_\ast (\tilde{\delta_i}) ,\]
and the remaining differentials on the classes $\mu_i$ and $\delta_i$ equals $0$. 
Furthermore, from the commutative diagram 
\[ \xymatrix{ \Omega M_k \ar[r]^{=} \ar[d] & \Omega M_k \ar[d] \\ 
\Fib(f) \ar[r] \ar[d] & P(M_k) \ar[d]\\
E_k \ar[r]^f & M_k,}\]
we see that $\Omega M_k \to \Fib(f) \to E_k$ is a principal fibration. It follows that the differential in the spectral sequence \eqref{mainspseq} respects the graded (right) $H_\ast (\Omega M_k)$-module structure by a result of Moore \cite{Moo56}. More precisely, we have 
$$d^{k}(\alpha\otimes hg )=\pm d^{k}(\alpha\otimes h ) (1\otimes g),$$ 
for $\alpha \in H_{*}(E_k)$ and $g,h \in H_{*}(\Omega M_k)$. Therefore, 
\[ \Im(d^n) + \Im(d^{2n-1}) \subset E^{2}_{0,\ast} \]
equals $\Omega f_\ast\Big(H_{n-1}(\Omega E_k)\Big) \cdot H_\ast \Omega M_{k} $ $+$ $\Omega f_\ast \Big(H_{2n-2}(\Omega E_k)\Big) \cdot H_\ast (\Omega M_k)$.   This equals 
\[ W+W\cdot H_\ast(\Omega M_k) +  \Omega f_\ast \Big(H_{2n-2}(\Omega E_k)\Big) \cdot H_\ast (\Omega M_k).\]
The hypothesis 2) stated before the proposition implies that 
\[W\cdot H_{n-1}(\Omega M_k) + \Omega f_\ast \Big(H_{2n-2}(\Omega E_k)\Big) = H_{2n-2}(\Omega M_k).\]
Therefore, we have 
\[E^2_{0,\ast} /\Big( \Im(d^n) + \Im(d^{2n-1})\Big) \cong R_n\{\lambda_k\}.\]
Further, note that $\lambda_k$ cannot be hit by any differential in the spectral sequence \eqref{mainspseq} other than the transgression. It follows that $E^{\infty}_{0,\ast} \cong R_n\{\lambda_k\}$, and as we have earlier seen that this is the only possible non-zero part of the $E^{\infty}$-page, the result is proved.
\end{proof}
 
\end{mysubsection}

\section{Sphere fibrations in low dimensional cases}\label{lowdim}
In this section, we inspect the conclusions of Theorem \ref{sphfibloc} in low dimensions, specifically when $n\leq 8$. The first case is when $n=2$, where the complexes in $\PP \DD_1^4$ are simply connected $4$-manifolds. Throughout this section we use the Jacobi identity for Whitehead products \cite[Cor 7.13]{Whi78}
\begin{myeq}\label{Jacwh}
\begin{aligned}
(-1)^{pr}\big[[f,g],h\big] + (-1)^{pq}\big[[g,h],f\big] + (-1)^{rq}\big[[h,f],g\big]=0, \\
 \mbox{ for } f\in \pi_{p}(X),~g\in \pi_q(X),~h\in \pi_r(X). 
\end{aligned}
\end{myeq}
Applying \eqref{Jacwh} together with the skew symmetry for Whitehead products ($[f,g]=(-1)^{pq}[g,f]$ for $f\in \pi_p(X)$, $g\in \pi_q(X)$ ), we obtain
\begin{myeq}\label{jacalpha}
\big[[\alpha_i,\alpha_j],\alpha_k\big]= - \big[\alpha_i,[\alpha_j,\alpha_k]\big] - \big[\alpha_j,[\alpha_i, \alpha_k]\big].
\end{myeq}
\begin{mysubsection}{ Simply connected four manifolds} 
The simply connected $4$-manifolds $M_k$ for $k\geq 2$ (which are defined by $H_2(M_k) \cong \Z^k$), support a principal $S^1$-bundle whose total space is $E_k\simeq \#^{k-1}(S^2 \times S^3)$\cite{BaBa15, DuLi05}. The proof of this result relies on Smale's classification of spin $5$-manifolds. Theorem \ref{sphfibloc} provides a homotopy theoretic method to approach the situation. We first point out the argument in a simple case. 
\begin{exam}\label{connsum}
Let $M=\C P^2 \# \overline{\C P^2}$, which implies that $M$ is the mapping cone of 
$S^3 \stackrel{\eta_1 - \eta_2}{\longrightarrow} S^2 \vee S^2.$
Note that $\pi_3 S^2 \cong \Z$ implies that $T_2 = \{ 2 \}$. The argument in Example \ref{sphk=2} now implies  that after inverting $2$, we have a fibration $S^1 \to S^2 \times S^3 \to M$. This is two steps away from the geometric argument which implies that inverting $2$ is not essential, and that the fibration is a principal $S^1$-fibration. Tracing out the formula in Example \ref{sphk=2} and the fact that the triple Whitehead product of the identity map of $S^2$ is $0$, we are supposed to consider the map 
\[ S^2 \vee S^3 \to M, \mbox{ sending }  \mu_1 \mapsto \alpha_1, \delta_1 \mapsto [\alpha_1, \alpha_2].\] 
We readily compute (here $\eta_{(3)}$ is the suspension of the Hopf map which satisfies $2\eta_{(3)}=0$)
\[
\begin{aligned} 
[\mu_1, \delta_1] \mapsto & \big[ \alpha_1,[\alpha_1, \alpha_2]\big]  \\
                                       &=  [\alpha_1, \alpha_2]\circ \eta_{(3)} -[\eta_1,\alpha_2] \mbox{ by  \cite[Theorem 6.1]{Hil54}  } \\ 
                                      &=   [\alpha_1, \alpha_2]\circ \eta_{(3)} -[\eta_1-\eta_2, \alpha_2] -[\eta_2,\alpha_2]\\ 
                                     &=[\alpha_1, \alpha_2]\circ \eta_{(3)} \mbox{  as $[\eta_2, \alpha_2]=0$ }\\
                                     & \neq 0.
\end{aligned} \]
However, one may easily check that a slight tinkering of the formula : 
\[ \mu_1 \mapsto \alpha_1 + \alpha_2, \delta_1 \mapsto  \eta_2\]
does indeed yield (using the fact that $[\eta_1,\alpha_2]= [\eta_1-\eta_2,\alpha_2]=0$ and analogous formulas)
\[
\begin{aligned} 
[\mu_1, \delta_1] \mapsto &  [\alpha_1+\alpha_2,\eta_2]  \\
                              &=  [\alpha_1,\eta_2] \mbox{ as $[\alpha_2,\eta_2]=0$}\\
&=0.
\end{aligned} \]
Once we obtain the map $S^2\times S^3 \to M$, the spectral sequence argument of Proposition \ref{spseqarg} implies that the homotopy fibre is $S^1$. In order to prove that this is indeed a principal fibration, note that a $w \in H^2(M)$ which satisfies $\langle w, \alpha_1+\alpha_2\rangle=0$, is represented by a map $M\to \C P^\infty$ such that the composite $S^2\times S^3 \to M \to \C P^\infty$ is null. It follows that $S^2 \times S^3$ maps to the homotopy fibre of $w:M\to \C P^\infty$ which is easily deduced to be an equivalence. Hence, $S^1 \to S^2 \times S^3 \to M$ becomes a principal fibration.
\end{exam}

\vspace{0.5 cm}

In the general case we achieve the result by an analogous computation to Example \ref{connsum}. We are required to choose the basis $\alpha_1,\cdots, \alpha_{k-1}$ judiciously so that the formulas in Theorem \ref{sphfibloc} yield an integral result. This kind of choice was also made in \cite{BaBa15, DuLi05}. 

\begin{theorem}\label{sc4mfld}
Let $M_k$ be a simply connected $4$-manifold with $H_{2}M_k\cong \Z^k$. Then, there is a principal  $S^{1}$-fibration $S^{1}\to E_k \to M_k$ where $E_k\simeq \#^{k-1}(S^{2}\times S^{3}).$ 
\end{theorem}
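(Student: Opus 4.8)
The plan is to follow the blueprint already laid out in Example~\ref{connsum}, but with a careful choice of basis adapted to the intersection form, exactly as in \cite{BaBa15, DuLi05}. First I would recall that for a simply connected $4$-manifold $M_k$, the attaching map $L(M_k)\in\pi_3\big((S^2)^{\vee k}\big)$ has the form \eqref{LMform} with $\phi=\eta$ and, since $\pi_3(S^2)\cong\Z$ is torsion-free, the $l_i\psi_i$ terms vanish; thus $L(M_k)=\sum_{i<j}g_{i,j}[\alpha_i,\alpha_j]+\sum_i g_{i,i}\alpha_i\circ\eta$ with $\big((g_{i,j})\big)$ the (symmetric, unimodular) intersection matrix. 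The key point is that, unlike Theorem~\ref{sphfibloc}, I do not want to invert $2$: the class $\alpha_i\circ\eta$ is $2$-torsion, so I must use Whitehead-product identities rather than the $\tfrac12 g_{i,i}[\alpha_i,\alpha_i]$ substitution. Because $\big((g_{i,j})\big)$ is a unimodular symmetric form over $\Z$, I can change basis so that $\alpha_k$ is chosen with $g_{k,k}$ even --- indeed every unimodular symmetric form over $\Z$ of rank $\ge 2$ represents an even value by a primitive vector (the argument of Lemma~\ref{quadform3}(b) gives this, or one observes directly that $g_{k,k}$ odd together with unimodularity lets one pass to $\alpha_k+\alpha_j$ for a suitable $j$). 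With $g_{k,k}$ even the self-Whitehead-product term $\tfrac12 g_{k,k}[\alpha_k,\alpha_k]$ becomes an integral multiple of $[\alpha_k,\alpha_k]$, and $[\alpha_k,\alpha_k]=\alpha_k\circ[\iota_2,\iota_2]=\alpha_k\circ(2\iota_3-\ldots)$; more to the point, for $n=2$ the triple bracket $\big[[\alpha_k,\alpha_k],\alpha_k\big]$ vanishes since $\big[[\iota_2,\iota_2],\iota_2\big]=0$.

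Next I would construct the map $f\colon E_k\to M_k$ on the $(3n-2)=4$-skeleton $(S^2)^{\vee k-1}\vee (S^3)^{\vee k-1}$ by sending $\mu_i\mapsto\alpha_i$ and $\delta_i\mapsto\beta_i$, where $\beta_i$ is now taken to be the \emph{integral} lift $\beta_i=\sum_{j=1}^{k-1}g_{i,j}[\alpha_j,\alpha_k]+\tfrac12 g_{i,k}[\alpha_k,\alpha_k]$ (well-defined integrally because $g_{k,k}$, and hence by the change of basis each relevant coefficient, can be arranged even --- or one argues coordinate by coordinate as in Example~\ref{connsum}, replacing $[\alpha_k,\alpha_k]$-type terms by Hopf-$\eta$ terms). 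Then I must verify that $f$ extends over the top cell, i.e.\ that $f_\ast\big(\sum_{i=1}^{k-1}[\mu_i,\delta_i]\big)=0$ in $\pi_4(M_k)$. By the same Jacobi-identity computation as in the proof of Theorem~\ref{sphfibloc3}, this bracket equals $[\overline L(M_k),\alpha_k]+\tfrac12 g_{k,k}\big[[\alpha_k,\alpha_k],\alpha_k\big]$; the second term is $0$ because the triple bracket of $\iota_2$ vanishes, and the first term is $[L(M_k),\alpha_k]=0$ since $L(M_k)$ is the attaching map and $[L(M_k),\alpha_k]$ is the composite of $L(M_k)$ with a suspension followed by inclusion into $M_k$, which kills $L(M_k)$. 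Here I would be careful that, working integrally, $\overline L(M_k)$ and $L(M_k)$ agree in $\pi_3$ only after noting $\rho$ is not injective, so instead I track the identity $\overline L(M_k)=L(M_k)$ modulo the image of $[\iota_2,\iota_2]$-multiples, which are exactly absorbed by the vanishing triple bracket --- this is the delicate bookkeeping point.

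Having produced $f\colon E_k\to M_k$, I would invoke Proposition~\ref{spseqarg}: hypotheses (1) and (2) hold by the construction of $f$ on homology (via Proposition~\ref{algmap} and Proposition~\ref{alglem2}, now over $\Z$ using that $H_\ast\Omega M_k$ and $H_\ast\Omega E_k$ are torsion-free by \cite[Thm 2.8]{BaBa18}), so $\Fib(f)$ has the homology of $S^1$; since all spaces are simply connected, $\Fib(f)\simeq S^1$ and we get a fibration $S^1\to E_k\to M_k$. Finally, to upgrade this to a \emph{principal} $S^1$-fibration, I would mimic the last paragraph of Example~\ref{connsum}: choose $w\in H^2(M_k)\cong[M_k,\C P^\infty]$ with $\langle w,\mu\rangle=0$ for the homology class $\mu$ carried by the image $f_\ast H_2(E_k)$ (possible since that image is a direct summand of rank $k-1$ and $w$ need only be primitive annihilating it), so that $E_k\to M_k\xrightarrow{w}\C P^\infty$ is null-homotopic; then $E_k$ lifts to the homotopy fibre of $w$, and a homology comparison shows this lift is an equivalence, exhibiting $S^1\to E_k\to M_k$ as the pullback of the universal $S^1$-bundle $S^1\to S^\infty\to\C P^\infty$, hence principal. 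The main obstacle I anticipate is the integral bookkeeping in the extension step --- ensuring every coefficient appearing in $\beta_i$ and in $f_\ast\big(\sum[\mu_i,\delta_i]\big)$ is genuinely integral after the basis change, and that the $2$-torsion $\eta$-terms and the vanishing triple Whitehead product conspire correctly so that no prime needs to be inverted; the choice of basis making the form represent an even primitive value is what makes this go through.
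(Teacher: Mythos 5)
Your overall strategy (build $f\colon E_k\to M_k$ on the $4$-skeleton, show it extends over the top cell, apply Proposition~\ref{spseqarg}, then upgrade to a principal fibration via the map to $\C P^\infty$) is the right one and matches the paper's framework, but the central step --- the extension of $f$ over the top cell --- has a genuine gap. You take
\[
\beta_i \;=\; \sum_{j=1}^{k-1}g_{i,j}[\alpha_j,\alpha_k]\;+\;\tfrac12 g_{i,k}[\alpha_k,\alpha_k]
\;=\;\sum_{j=1}^{k-1}g_{i,j}[\alpha_j,\alpha_k]\;+\;g_{i,k}\,\eta_k,
\]
which is exactly the simplified formula \eqref{betasimple} of Remark~\ref{simplerbeta}, and you assert that ``the same Jacobi-identity computation as in Theorem~\ref{sphfibloc3}'' gives $\sum_i[\alpha_i,\beta_i]=[\overline L(M_k),\alpha_k]+\tfrac12 g_{k,k}\big[[\alpha_k,\alpha_k],\alpha_k\big]$. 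That computation is only valid after inverting $2$: the tensor-algebra identity $[y,x\otimes x]=[[y,x],x]$ translates into the Whitehead-product world only up to the Hopf-invariant correction of \cite[Theorem 6.1]{Hil54}, namely $[\alpha_i,[\alpha_i,\alpha_k]]=[\alpha_i,\alpha_k]\circ\eta_{(3)}-[\eta_i,\alpha_k]$, and those $\eta_{(3)}$-terms are $2$-torsion and so vanish precisely when $2$ has been inverted. Carrying the Jacobi computation out integrally with your $\beta_i$ one finds
\[
\sum_{i=1}^{k-1}[\alpha_i,\beta_i]
\;=\;-[L(M_k),\alpha_k]\;+\;\sum_{i=1}^{k-1}(g_{i,i}+g_{i,k})\,[\alpha_i,\alpha_k]\circ\eta_{(3)},
\]
and the second sum vanishes if and only if $g_{i,i}\equiv g_{i,k}\pmod 2$ for all $i<k$ --- which is exactly the parity hypothesis Remark~\ref{simplerbeta} imposes, holding when $w_2(M_k)\neq 0$ and $\alpha_k$ is dual to $w_2$. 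Your basis change (arranging $g_{k,k}$ to be even) neither implies nor is implied by this parity condition, and in the even (spin) case one cannot even choose $\alpha_k$ dual to $w_2=0$, so the simplified formula is genuinely insufficient.

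The paper avoids this by making the complementary basis choice --- taking $\alpha_1,\dots,\alpha_{k-1}$ to span a summand of the kernel of a class $w$ reducing to $w_2(M_k)$, so that $g_{i,i}$ is even for $i\le k-1$ (not for $i=k$) --- and then using the longer formula \eqref{beta4},
\[
\beta_i=\sum_{j=1}^{k-1}g_{i,j}[\alpha_j,\alpha_k]+\sum_{j=1}^{i-1}g_{i,j}[\alpha_i,\alpha_j]+\sum_{j=i+1}^{k}g_{i,j}\eta_j,
\]
whose extra $[\alpha_i,\alpha_j]$- and $\eta_j$-terms are precisely calibrated, via the identity $g_{i,i}[\eta_i,\alpha_j]=-g_{i,i}[\alpha_i,[\alpha_i,\alpha_j]]$ (using that $g_{i,i}$ is even), so that the Hopf-invariant corrections collapse to $L(M_k)\circ\eta_{(3)}$, which vanishes in $\pi_4(M_k)$ because $L(M_k)=0$ there. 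Your hedge (``or one argues coordinate by coordinate as in Example~\ref{connsum}, replacing $[\alpha_k,\alpha_k]$-type terms by Hopf-$\eta$ terms'') gestures at the right fix, but without actually producing the modified $\beta_i$ and the accompanying basis choice the argument does not close. Everything after that step --- the spectral-sequence identification of $\Fib(f)\simeq S^1$ and the upgrade to a principal $S^1$-bundle by pulling back along a primitive $w\in H^2(M_k)$ annihilating $f_\ast H_2(E_k)$ --- matches the paper and is fine.
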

\begin{proof}
Let $w\in H^2(M_k)$ be a class that  reduces to $w_2(M_k)$ modulo $2$, and choose  $\alpha_1, \cdots, \alpha_{k-1}$ to be a linearly independent set spanning a summand of  the kernel of the linear map $H_2(M_k)\to \Z$ induced by $w$. In the notation of \eqref{poM}, the matrix $\big( ( g_{i,j})\big)$ is the inverse of the matrix of the intersection form, and so by this choice $g_{i,i}$ is even for $i\leq k-1$. 
This implies $g_{i,i}[\eta_i,\alpha_j]=-g_{i,i}\big[\alpha_i,[\alpha_i, \alpha_j]\big]$ by \cite[Theorem 6.1]{Hil54}.
We define the $\beta$-classes as (where $\eta_j= \alpha_j\circ \eta$)
\begin{myeq} \label{beta4} 
\beta_{i} = \sum_{j=1}^{k-1} g_{i,j}[\alpha_{j}, \alpha_{k}] + \sum_{j=1}^{i-1} g_{i,j}[\alpha_i, \alpha_j] + \sum_{j=i+1}^k g_{i,j}\eta_j .
\end{myeq}
Then, we have (noting that $L(M_k)=\sum_{1\leq i < j \leq k} g_{i,j}[\alpha_i,\alpha_j]+  \sum_{i=1}^k g_{i,i} \eta_i$ \eqref{LMform} and the formula \eqref{jacalpha}) 
\[
\begin{aligned} 
\sum_{i=1}^{k-1}[\alpha_i, \beta_i] &= \sum_{i=1}^{k-1}\sum_{j=1}^{k-1} g_{i,j}\big[\alpha_i,[\alpha_{j}, \alpha_{k}]\big] +  \sum_{1\leq j<i\leq k-1} g_{i,j}\big[ \alpha_i,[\alpha_i, \alpha_j]\big] + \sum_{1\leq i < j \leq k} g_{i,j}[\alpha_i, \eta_j]   \\
                              &= \sum_{1\leq i < j \leq k-1} - g_{i,j} \big[[\alpha_i, \alpha_j],\alpha_k\big] - \sum_{i=1}^{k-1} g_{i,i}[\eta_i, \alpha_k]  +  \sum_{1\leq j<i\leq k-1} g_{i,j}\big[ \alpha_i,[\alpha_i, \alpha_j]\big] \\            
& - \sum_{1\leq i < j \leq k} g_{i,j}\big[[\alpha_i,\alpha_j], \alpha_j] + \sum_{1\leq i < j \leq k} g_{i,j} [\alpha_i,\alpha_j]\circ \eta_{(3)}  \mbox{ by  \cite[Theorem 6.1]{Hil54}  } \\ 
                                      &=  -  [L(M_k),\alpha_k] + L(M_k)\circ \eta_{(3)} \mbox{ as $\big[ [\alpha_k,\alpha_k],\alpha_k\big]=0$}.
\end{aligned} \]

Let $E_k=\#^{k-1}(S^2\times S^3)$. Proceeding as in the proof of Theorem \ref{sphfibloc3}, we obtain a map $f: E_k \to M_k$ sending $\mu_i$ to $\alpha_i$ and $\delta_i$ to $\beta_i$. One directly observes that the image of the $\beta_i$ in 
\[H_{2n-2}(\Omega M_k)/\Big(\Z\{a_1,\cdots, a_{k-1}\} \cdot H_{n-1}(\Omega M_k)\Big) \]
equals $-b_i$ (of Theorem \ref{sphfibloc3}). The spectral sequence argument of Proposition \ref{spseqarg} now applies to yield that the homotopy fibre of $f$ is $S^1$. The argument in the last paragraph of Example \ref{connsum} may now be repeated to deduce that this is a principal fibration. 
\end{proof}

\begin{rmk} \label{simplerbeta}
Note that the formula for the $\beta_i$ in \eqref{beta4} may be simplified in the case where $w_2(M_k)\neq 0$. Here, by choosing $\alpha_k$ such that $\pmod{2}$, $\alpha_k$ is the Poincar\'{e} dual of $w_2(M_k)$, we have from  \cite[Lemma 2.4]{CoHa90} that $g_{i,i}\equiv g_{i,k} \pmod{2}$ for $i\leq k-1$. Then, the  formula 
\begin{myeq} \label{betasimple} 
\beta_{i} = \sum_{j=1}^{k-1} g_{i,j}[\alpha_{j}, \alpha_{k}] +  g_{i,k}\eta_k 
\end{myeq}
gives
\[ \sum_{i=1}^{k-1} [\alpha_i, \beta_i]=- [L(M_k),\alpha_k].\]
\end{rmk}

\end{mysubsection}

\begin{mysubsection}{$3$-connected $8$-manifolds} As in the case of simply connected $4$-manifolds, we search for integral versions of the Theorem \ref{sphfibloc}. For this we require some results about the Whitehead products in the homotopy groups of $S^4$ \cite{Tod62, Hil54}. Recall that  $\iota_4$ is the homotopy class of the identity map $S^4 \to S^4$. 
\begin{myeq}\label{htp7-4} 
\pi_7S^4 \cong \Z\{\nu\}\oplus \Z/(12)\{\nu'\},~~   [\iota_4,\iota_4]=2\nu + \nu',
\end{myeq}
where $\nu=\mbox{ Hopf construction on the quaternionic multiplication}$. We also have using \cite[\S 4]{Hil54} and \cite[(3.7)]{Jam57}
\begin{myeq}\label{htp10-4} 
\begin{aligned}
\pi_{10}S^4 \cong \pi_{10}(S^7)\oplus \pi_9(S^3)=\Z/(24)\{x\}\oplus \Z/3\{y\}, \\
 x= \nu\circ \nu_{(7)}, ~ y= \nu'\circ \nu_{(7)}= \nu' \circ \nu'_{(7)}, ~ \nu'_{(7)}=-2\nu_{(7)}, \\
 [\iota_4,\iota_4]\circ \nu_{(7)}=2x+y,~[\nu',\iota_4]=-4x + y, \\
 [\nu,\iota_4]=2x,~\big[[\iota_4,\iota_4],\iota_4\big]=y.
\end{aligned}
\end{myeq}

We start with an example.

\begin{exam}\label{connsumhp}
Let $M=\Hyp P^2 \# \overline{\Hyp P^2}$, which is the mapping cone of $S^7 \stackrel{\nu_1-\nu_2}{\to} S^4 \vee S^4$. We let $\alpha_1$ and $\alpha_2$ denote the two wedge summands of $S^4 \vee S^4$. Form a map $E_2 \to M$ ($E_2= S^4\times S^7$) by
\[ \mu_1 \mapsto \alpha_1-\alpha_2, \mbox{ and } \delta_1 \mapsto \nu_2. \]
Observe that 
\[
\begin{aligned} 
[\mu_1, \delta_1] &\mapsto [\alpha_1 - \alpha_2, \nu_2]   \\
                         &= [\alpha_1,\nu_2] - [\alpha_2,\nu_2]  \\
                         &= 2x_1 - [L(M), \alpha_1] - 2x_2 \\
                         &=- L(M)\circ \nu'_{(7)} - [L(M),\alpha_1],
\end{aligned}
\]
so that we get a map $E_2 \to M$. Now it is easily checked that this satisfies the hypothesis of Proposition \ref{spseqarg}, and therefore, we deduce that the homotopy fibre of the map is $S^3$. We have thus constructed a fibration $S^3 \to S^4 \times S^7 \to \Hyp P^2 \# \overline{\Hyp P^2}$. We also note that this is a principal fibration as it is given by the pullback of the map $\Hyp P^2 \# \overline{\Hyp P^2} \to \Hyp P^2$ via the map which on $\pi_4$ sends both $\alpha_1$ and $\alpha_2$ to the generator. 
\end{exam}

The computations get more involved once we allow factors of $\nu'$ in the expression of $L(M)$. The following example deals with the rank $2$ case for an even intersection form, which we may assume to be the hyperbolic form by the classification \cite[Ch II (2.2)]{MiHu73}. 
\begin{exam}\label{even2}
Consider $M_2 \in \PP\DD_3^8$ determined by 
\[L(M_2)= [\alpha_1,\alpha_2]+l_1 \nu'_1 + l_2 \nu'_2.\]
Note that if $l_1$ and $l_2$ are both $1$, we cannot obtain a map $M_2 \to \Hyp P^\infty$ which on $\pi_4$ sends $\alpha_1\mapsto n_1 \iota$ and $\alpha_2 \mapsto n_2\iota$ with $\gcd(n_1,n_2)=1$. This is because $L(M_2) \mapsto n_1n_2\nu + (n_1n_2+n_1+n_2)\nu'$, and for coprime $n_1$, $n_2$, $n_1n_2+n_1+n_2$ must be odd. It follows that there is no principal $S^3$-fibration over $M_2$ in which the total space is $3$-connected. However, we can verify that for all possible values of $l_1$ and $l_2$, there  exist fibrations $S^3\to S^4 \times S^7 \to M_2$.

For constructing the fibrations, we note  it suffices to find $\mu_1$, $\delta_1$ such that $[\mu_1,\delta_1]=0\in \pi_{10}(M_2)$, and the image of  $\delta_1$ in the homology of the loop space satisfies the hypothesis of Proposition \ref{spseqarg}. We also note the symmetry between the factors $\alpha_1$ and $\alpha_2$, and also by replacing both by it's negative that $l_i\mapsto -l_i$ for $i=1,2$. Modulo these symmetries, the following formulas for $\mu_1$ and $\delta_1$ satisfy the required criteria. 
\[l_1\equiv 0 ~(\bmod{6}), l_2\equiv 0 ~(\bmod{3})\mbox{ } : \mbox{ } \mu_1=\alpha_2, \delta_1= \nu_1.\]
\[l_1\equiv 3 ~(\bmod{6}), l_2\equiv 0 ~(\bmod{3})\mbox{ } : \mbox{ } \mu_1=6\alpha_1+\alpha_2, \delta_1= \nu_1.\]
\[l_1\equiv 0 ~(\bmod{6}), l_2\equiv 2 ~(\bmod{3})\mbox{ } : \mbox{ } \mu_1=4\alpha_1+\alpha_2, \delta_1= 63\nu_1+4\nu_2.\]
\[l_1\equiv 3 ~(\bmod{6}), l_2\equiv 2 ~(\bmod{3})\mbox{ } : \mbox{ } \mu_1=10\alpha_1+\alpha_2, \delta_1= 399\nu_1+4\nu_2.\]
\[l_1\equiv 1 ~(\bmod{6}), l_2\equiv 1 ~(\bmod{3})\mbox{ } : \mbox{ } \mu_1=2\alpha_1+\alpha_2, \delta_1= 175\nu_1+44\nu_2.\]
\[l_1\equiv 4 ~(\bmod{6}), l_2\equiv 4 ~(\bmod{6})\mbox{ } : \mbox{ } \mu_1=8\alpha_1+\alpha_2, \delta_1= 257\nu_1+4\nu_2.\]
\[l_1\equiv 1 ~(\bmod{6}), l_2\equiv 2 ~(\bmod{3})\mbox{ } : \mbox{ } \mu_1=10\alpha_1+\alpha_2, \delta_1= 401\nu_1+4\nu_2-\nu_1'.\]
\[l_1\equiv 4 ~(\bmod{6}), l_2\equiv 2 ~(\bmod{3})\mbox{ } : \mbox{ } \mu_1=4\alpha_1+\alpha_2, \delta_1= 127\nu_1+8\nu_2+\nu_1'.\]
\end{exam}

\vspace{0.5cm}

We first construct the fibration when the intersection form is not even, so that  an appropriate analogue of the formula in Remark \ref{simplerbeta} works. Let $M_k \in \PP \DD_3^8$  with $H_4(M_k)=\Z^k$. Assume that $w_4(M_k)\neq 0$ which is equivalent to the intersection form not being even. Choose a basis $\alpha_1,\cdots, \alpha_k$ of $H_4(M_k)$ such that 
\begin{myeq}\label{cond4odd}
\langle \alpha_i, \alpha_k \rangle = \langle \alpha_i, \alpha_i\rangle \pmod{2} \mbox{ for } 1\leq i \leq k-1,
\end{myeq}
where $\langle -, - \rangle$ is the intersection form. This is satisfied if $ \alpha_k$ is the Poincar\'{e} dual of $w_4(M_k) \pmod{2}$ by an analogous argument to \cite[Lemma 2.4]{CoHa90}. We write $L(M_k)$ as 
\begin{myeq} \label{4lmodd} 
L(M_k)=\sum_{1\leq i < j \leq k} g_{i,j}[\alpha_i,\alpha_j] + \sum_{i=1}^k g_{i,i} \nu_i + \sum_{i=1}^k l_i \nu'_i.
\end{myeq}

\begin{prop}\label{4betaoddcase}
Suppose that $\alpha_1,\cdots, \alpha_k$ satisfy \eqref{cond4odd}, and either \\
(A) $6 \mid l_k$, or \\
(B)  $4\nmid g_{k,k}$ and $3 \mid l_k$, \\
 with $g_{k,k}$ and $l_k$  as in \eqref{4lmodd}. Under this assumption we may choose $\beta_i\in \pi_{2n-1}(M_k)$ $1\leq i \leq k-1$ such that $\sum_{i=1}^{k-1} [\alpha_i,\beta_i] = 0$, and the hypothesis of Proposition \ref{spseqarg} is satisfied.  
\end{prop}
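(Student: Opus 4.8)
The plan is to follow the template of Theorem~\ref{sc4mfld} and Remark~\ref{simplerbeta}, replacing the Hopf map $\eta$ of $S^{2}$ by the quaternionic Hopf map $\nu$ of $S^{4}$ and tracking in addition the torsion summand generated by $\nu'$. Write $\nu_{j}=\alpha_{j}\circ\nu$, $\nu'_{j}=\alpha_{j}\circ\nu'$. The ansatz is
\[
\beta_{i}=\sum_{j=1}^{k-1}g_{i,j}[\alpha_{j},\alpha_{k}]+g_{i,k}\,\nu_{k}+\gamma_{i}\in\pi_{7}(M_{k}),\qquad 1\le i\le k-1,
\]
where $\gamma_{i}$ is a correction term built from $\nu_{j},\nu'_{j}$ with $j\le k-1$ and a multiple of $\nu'_{k}$, to be pinned down in the course of the computation. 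The first two summands are dictated by hypothesis (2) of Proposition~\ref{spseqarg}: in $H_{2n-2}(\Omega S^{4})\cong\Z$ one computes $\rho(\nu)=-a^{2}$ --- since $\rho([\iota_{4},\iota_{4}])=-[a,a]=-2a^{2}$ and $\rho(\nu')=0$, the target being torsion free --- so that $\rho(\beta_{i})\equiv-w_{i}\pmod{W\cdot H_{n-1}(\Omega M_{k})}$ in the notation of \eqref{loopbi}, because $\rho(\nu'_{j})=0$ and $\rho(\nu_{j})=-a_{j}^{2}\in W\cdot H_{n-1}(\Omega M_{k})$ for $j\le k-1$. By Proposition~\ref{alglem}(2), used as in the proof of Theorem~\ref{sphfibloc3}, the classes $\rho(\beta_{i})$ then project to a basis of $H_{2n-2}(\Omega M_{k})/(W\cdot H_{n-1}(\Omega M_{k}))$, which is hypothesis (2); hypothesis (1) is immediate since $f$ sends $\mu_{i}\mapsto\alpha_{i}$, so $f_{\ast}$ on $H_{4}$ is the inclusion of $\langle\alpha_{1},\dots,\alpha_{k-1}\rangle$ with complement $R_{n}\{\alpha_{k}\}$. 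Thus, modulo the choice of $\gamma_{i}$, the content of the proposition reduces to the identity $\sum_{i=1}^{k-1}[\alpha_{i},\beta_{i}]=0$ in $\pi_{10}(M_{k})$.

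To compute $\sum_{i=1}^{k-1}[\alpha_{i},\beta_{i}]$ I would first use the Jacobi identity \eqref{jacalpha} and skew-symmetry to rewrite $\sum_{i,j\le k-1}g_{i,j}[\alpha_{i},[\alpha_{j},\alpha_{k}]]$ as $-\sum_{i<j\le k-1}g_{i,j}[[\alpha_{i},\alpha_{j}],\alpha_{k}]+\sum_{i\le k-1}g_{i,i}[\alpha_{i},[\alpha_{i},\alpha_{k}]]$, and then expand each $[\alpha_{i},[\alpha_{i},\alpha_{k}]]$ by Hilton's formula \cite[Theorem 6.1]{Hil54}, which over $S^{4}$ yields a composition term $[\alpha_{i},\alpha_{k}]\circ\theta_{i}$ with $\theta_{i}\in\pi_{10}S^{7}$ and a bracket of $\alpha_{k}$ with $\alpha_{i}\circ[\iota_{4},\iota_{4}]=2\nu_{i}+\nu'_{i}$. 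Naturality of Whitehead products gives $[\nu_{j},\alpha_{k}]=\pm[\alpha_{j},\alpha_{k}]\circ\nu_{(7)}$ and $[\nu'_{j},\alpha_{k}]=\mp 2[\alpha_{j},\alpha_{k}]\circ\nu_{(7)}$ for $j\neq k$, while for the last index \eqref{htp7-4}--\eqref{htp10-4} supply $[\nu_{k},\alpha_{k}]=2x_{k}$, $[\nu'_{k},\alpha_{k}]=-4x_{k}+y_{k}$ and $[[\alpha_{k},\alpha_{k}],\alpha_{k}]=y_{k}$, where $x_{k}=\alpha_{k}\circ x$, $y_{k}=\alpha_{k}\circ y$. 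The choice of basis satisfying \eqref{cond4odd} (so $g_{i,i}\equiv g_{i,k}\bmod 2$, which is what one needs because the composition classes $[\alpha_{j},\alpha_{k}]\circ\nu_{(7)}$ are $2$-torsion) is then used, exactly as in Remark~\ref{simplerbeta}, to cancel the mixed terms for $i\le k-1$; the $\gamma_{i}$ are chosen to swallow the residual $\nu'_{i}$-contributions of $\alpha_{i}\circ[\iota_{4},\iota_{4}]$ and of the $\sum_{i\le k-1}l_{i}\nu'_{i}$-part of $L(M_{k})$ (see \eqref{4lmodd}). The expected outcome is
\[
\sum_{i=1}^{k-1}[\alpha_{i},\beta_{i}]=-[L(M_{k}),\alpha_{k}]+\bigl(L(M_{k})\circ\theta\bigr)+a\,x_{k}+b\,y_{k}
\]
for some $\theta\in\pi_{10}S^{7}$ and integers $a,b$ determined by $g_{k,k}$, $l_{k}$ and the chosen $\gamma_{i}$.

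It remains to show the right-hand side is zero in $\pi_{10}(M_{k})$. Since $L(M_{k})$ becomes null in $\pi_{7}(M_{k})$ --- it is the attaching map of the top cell, cf.\ \eqref{4lmodd} --- both $[L(M_{k}),\alpha_{k}]$ and $L(M_{k})\circ\theta$ vanish there, so everything reduces to $a\,x_{k}+b\,y_{k}=0$. This is where hypotheses (A) and (B) enter: in $\pi_{10}(S^{4})$ the class $x$ has order $24$ and $y$ has order $3$, the congruence $3\mid l_{k}$ forces $b\equiv 0\pmod 3$ so the $y_{k}$-term dies, and the $2$-primary part of $a$ --- governed by $2g_{k,k}-4l_{k}$ together with the $2$-divisible corrections available from $[\nu_{j},\alpha_{k}]$ and $[\nu'_{j},\alpha_{k}]$ --- can be reduced to $0\bmod 8$ precisely when either $6\mid l_{k}$ (so $2\mid l_{k}$) or $4\nmid g_{k,k}$; these are exactly the alternatives (A) and (B). I expect this $2$- and $3$-primary bookkeeping against the torsion of $\pi_{10}(S^{4})$ to be the only real obstacle --- the rest is the same formal Whitehead-product calculus as in Theorem~\ref{sc4mfld} --- and once it is carried out, $\sum_{i=1}^{k-1}[\alpha_{i},\beta_{i}]=0$ and, together with the verification of hypotheses (1)--(2) above, the proof is complete.
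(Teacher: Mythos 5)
Your ansatz, the verification of hypotheses (1) and (2) of Proposition \ref{spseqarg}, and most of the Whitehead--product bookkeeping (Jacobi, Hilton, the use of \eqref{cond4odd} to cancel the mixed terms, the observation that $3\mid l_k$ kills $y_k$) match what the paper does. The step that fails is the final one, where you reduce to showing $a\,x_k+b\,y_k=0$ and claim the $2$-primary part of $a$ ``can be reduced to $0\bmod 8$'' under (A) or (B). Once the mixed-index terms are absorbed into the corrections $\gamma_i$, the coefficient of $x_k$ in $\sum_{i<k}[\alpha_i,\beta_i]$ is $2g_{k,k}-4l_k$, and no correction $\gamma_i\in\pi_{7}(M_k)$ with $i<k$ can change it: $[\alpha_i,\gamma_i]$ only produces $x_i$-, $y_i$- or mixed-index composition classes, never a multiple of $x_k=\alpha_k\circ x$. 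Under (A), $4l_kx_k=0$ but $2g_{k,k}x_k$ is nonzero unless $12\mid g_{k,k}$, which is not assumed; under (B) the situation is no better. So $a\equiv 0\pmod{24}$ simply does not hold, and your appeal to ``$2$-divisible corrections from $[\nu_j,\alpha_k]$, $[\nu'_j,\alpha_k]$'' does not help, since for $j<k$ those are mixed-index compositions, not multiples of $x_k$.

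What the paper does instead --- and this is the missing idea --- is \emph{not} to kill the $x_k$ residual directly but to re-express it. Conditions (A) and (B) guarantee an integer $r$ with $4l_k\equiv 2g_{k,k}r\pmod{24}$ ($r=0$ suffices in case (A); the solvability in case (B) is exactly what $4\nmid g_{k,k}$ buys). One then rewrites
\[
(2g_{k,k}-4l_k)\,x_k \;=\; -(1-r)\,g_{k,k}\,\nu_k\circ\nu'_{(7)},
\]
and, expanding $g_{k,k}\nu_k$ via the relation \eqref{4lmodd} and using $l_ky_k=0$, this equals $-(1-r)\,L(M_k)\circ\nu'_{(7)}$ up to further mixed-index terms, which \emph{can} be absorbed by perturbing the $\beta_i$. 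The identity obtained in $\pi_{10}\bigl((S^4)^{\vee k}\bigr)$ is
\[
\sum_{i=1}^{k-1}[\alpha_i,\beta_i]\;=\;-[L(M_k),\alpha_k]\;-\;(1-r)\,L(M_k)\circ\nu'_{(7)},
\]
with no residual $x_k$ or $y_k$ term at all; both summands then vanish in $\pi_{10}(M_k)$ because $L(M_k)=0$ there. In other words, (A)/(B) are used to solve for $r$ and make the re-expression possible, not to force $24\mid a$. Your $L(M_k)\circ\theta$ is the shadow of this device, but in your write-up it sits passively while you try to annihilate $ax_k$ by arithmetic, which is where the argument breaks down.
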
 

\begin{proof}
 Consider the formula \eqref{betasimple} and write 
\begin{myeq}\label{initbeta}
\beta'_i= \sum_{j=1}^{k-1} g_{ij}[\alpha_{j}, \alpha_{k}] +  g_{i,k}\nu_k.
\end{myeq}
We then have using \eqref{htp10-4} and  \cite[Theorem 6.1]{Hil54}
\[
\begin{aligned} 
\sum_{i=1}^{k-1}[\alpha_i, \beta'_i] &= \sum_{i=1}^{k-1}\sum_{j=1}^{k-1} g_{ij}\big[\alpha_i,[\alpha_{j}, \alpha_{k}]\big]  + \sum_{ i=1}^{k-1} g_{i,k}[\alpha_i, \nu_k]   \\
                              &= -\sum_{1\leq i < j \leq k}  g_{i,j} \big[[\alpha_i, \alpha_j],\alpha_k\big] - \sum_{i=1}^{k-1} g_{i,i}[\nu_i, \alpha_k]  +  \sum_{ i=1}^{k-1} (g_{i,i}+g_{i,k})  [\alpha_i,\alpha_k]\circ \nu_{(7)} \\ 
                                      &=  -  [L(M_k),\alpha_k] + g_{k,k}[\nu_k, \alpha_k] +\sum_{i=1}^{k} l_i [\nu'_i,\alpha_k] - \sum_{i=1}^{k-1} \Big(\frac{g_{i,i}+g_{i,k}}{2}\Big)[\alpha_i,\alpha_k]\circ \nu'_{(7)}  \\
                                      &=-[L(M_k),\alpha_k] + 2 g_{k,k} x_k - 4l_k x_k + l_ky_k + \sum_{i=1}^{k-1} l_i [\alpha_i,\nu'_k] - \sum_{i=1}^{k-1} \Big(\frac{g_{i,i}+g_{i,k}}{2}\Big)[\alpha_i,\nu'_k] .                                      
\end{aligned} \]
Note that $3\mid l_k$ implies $l_k y_k=0$. If $6 \mid l_k$, we also have $4l_k x_k =0$. Otherwise, the condition $4\nmid g_{k,k}$ implies that $4l_k \equiv 2g_{k,k} r \pmod{24}$. We now rewrite using $l_k \nu'_k\circ \nu'_{(7)}= l_k y_k=0$
\[
\begin{aligned}
(2g_{k,k}-4l_k)x_k &=- (1-r)g_{k,k} \nu_k\circ \nu'_{(7)}\\
 &= -(1-r)\Big[L(M_k)\circ \nu'_{(7)} -\sum_{1\leq i < j \leq k} g_{i,j}[\alpha_i,\alpha_j]\circ \nu'_{(7)} - \sum_{i=1}^{k-1} g_{i,i} \nu_i\circ \nu'_{(7)} -\sum_{i=1}^{k-1} l_i \nu'_i \circ \nu'_{(7)}\Big]\\
&= -(1-r)\Big[L(M_k)\circ \nu'_{(7)} - \sum_{1\leq i < j \leq k} g_{i,j}[\alpha_i,\nu'_j] + \sum_{i=1}^{k-1} g_{i,i} [\alpha_i, \nu_i] - \sum_{i=1}^{k-1} l_i \big[\alpha_i,[\alpha_i, \alpha_i]\big]\Big].
\end{aligned}
\]
Now we define $\beta_i$ by perturbing the $\beta_i'$ of \eqref{initbeta}
\[ \beta_i = \beta'_i - \big(l_i - \frac{g_{i,i}+g_{i,k}}{2}\big) \nu'_k -(1-r) \sum_{ j=i+1}^{k} g_{i,j}\nu'_j +(1-r) g_{i,i} \nu_i - (1-r)l_i [\alpha_i,\alpha_i],  \]
to get 
\[ \sum_{i=1}^{k-1} [\alpha_i,\beta_i] = -[L(M_k),\alpha_k] -(1-r) L(M_k)\circ \nu'_{(7)}.\]
We also verify easily that $\beta_i$ satisfy the requirements of Proposition \ref{spseqarg}. 
\end{proof} 

Proposition \ref{4betaoddcase} becomes applicable once we show that the hypotheses \eqref{cond4odd} and $3\mid l_k$ are always achievable. This is the subject of the following lemma. 
\begin{lemma}\label{choicebas4}
There is a choice of basis of $H_n(M_k)$ for $k\geq 2$ such that \eqref{cond4odd} holds, and either (A) or (B) of Proposition \ref{4betaoddcase} is satisfied.
\end{lemma}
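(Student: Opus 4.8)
The plan is to dispose of \eqref{cond4odd} first and then to analyse what remains to be arranged for $g_{k,k}$ and $l_k$ in \eqref{4lmodd}, exploiting the freedom in the choice of $\alpha_k$.

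First I would handle \eqref{cond4odd}. Since $w_4(M_k)\neq 0$ the intersection form $\langle-,-\rangle$ is odd, hence admits a characteristic vector $c$, i.e.\ $\langle c,x\rangle\equiv\langle x,x\rangle\pmod 2$ for all $x\in H_4(M_k)$; moreover $c$ may be taken primitive, for if $c=mc'$ with $c'$ primitive then $m$ is odd (as $c$ has an odd coordinate), whence $c'$ is again characteristic. Taking $\alpha_k=c$ and completing to a basis gives \eqref{cond4odd}; this is the analogue of the argument in Theorem \ref{sc4mfld} via \cite[Lemma 2.4]{CoHa90}. The point is that $\alpha_k$ is still free to range over all primitive characteristic vectors (and $\alpha_1,\dots,\alpha_{k-1}$ over any basis completion), and I will use this latitude to adjust $g_{k,k}$ and $l_k$.

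Next I would pin down how $g_{k,k}$ and $l_k$ depend on the choice of $\alpha_k$. For $g_{k,k}=\langle\alpha_k,\alpha_k\rangle$, van der Blij's lemma (\cite[Ch.~II]{MiHu73}) gives $g_{k,k}\equiv\sigma\pmod 8$, $\sigma$ the signature of $\langle-,-\rangle$, for \emph{every} characteristic vector $\alpha_k$; thus whether $4\mid g_{k,k}$ is determined by $\langle-,-\rangle$ alone. For $l_k$, I would compute its change under a base change $\alpha'_m=\sum_p a_{mp}\alpha_p$, using $[\iota_4,\iota_4]=2\nu+\nu'$, the behaviour of $\nu,\nu'$ under degree maps of $S^4$ (which one reads off from \eqref{htp7-4} and the relation $[a\iota_4,a\iota_4]=a^2[\iota_4,\iota_4]$), the identities \eqref{htp10-4}, and the Hilton–Hopf distributivity $(f_1+\dots+f_r)\circ\nu=\sum_i f_i\circ\nu+\sum_{i<j}[f_i,f_j]$ (valid here because the higher Hilton–Hopf invariants of $\nu$ vanish in this range). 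Distilling the resulting formula, I would show that $q_p:=2l_p-g_{p,p}\in\Z/24$ transforms linearly in the $\alpha_p$, so that the relevant datum is a homomorphism $q\colon H_4(M_k)\to\Z/24$ with $q(x)\equiv\langle x,x\rangle\pmod 2$, and $q(\alpha_p)=q_p$.

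With this, the goal becomes: find a primitive characteristic vector $v$ with $q(v)\equiv-\langle v,v\rangle\pmod{12}$ (this is $6\mid l_k$, case (A)), or, when $4\nmid\sigma$, merely $q(v)\equiv-\langle v,v\rangle\pmod 6$ (this is $3\mid l_k$, and $4\nmid g_{k,k}$ is then automatic, case (B)). I would solve this congruence by reducing mod $3$ and mod $8$ and arguing exactly in the style of Lemma \ref{quadform3}: diagonalise $\langle-,-\rangle$ over $\F_3$, then locate inside the nonempty coset of primitive characteristic vectors a solution of the resulting quadratic congruence; for rank $k\ge 3$ there is ample room, just as in Lemma \ref{quadform3}(a),(b), while for $k=2$ the form is one of the few odd unimodular rank-$2$ forms and the required vector is exhibited by hand, in the spirit of Examples \ref{sphk=2} and \ref{even2}. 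The main obstacle is precisely this last solvability step for small rank: there the pool of primitive characteristic vectors is tightly constrained and the congruence leaves little slack, so one must use the full compatibility between $q$ and $\langle-,-\rangle$ (and, where the bound is borderline, possibly a direct construction of $\beta_1,\dots,\beta_{k-1}$ rather than the uniform recipe of Proposition \ref{4betaoddcase}) to rule out the apparent obstruction; verifying this with the explicit low-degree Whitehead-product formulas is where the real work lies, the remainder being bookkeeping with van der Blij's lemma and $\F_3$-linear algebra.
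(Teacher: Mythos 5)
Your proposal differs in route from the paper's proof of Lemma~\ref{choicebas4}: instead of the explicit change-of-basis maneuvers recorded there (which fix $\alpha_k$, or flip its sign, and adjust the remaining $\alpha_i$ one at a time to push $l_k$ into the desired residue class), you want to constrain $g_{k,k}$ a priori via van der Blij's theorem and then recast the whole problem as solving a single quadratic congruence for a primitive characteristic vector. That is an attractive reframing, and the van der Blij observation is a genuine strengthening: once $\alpha_k$ is characteristic, $g_{k,k}\bmod 8$ is already decided by the signature, so whether one needs case (A) or case (B) of Proposition~\ref{4betaoddcase} cannot be influenced by further base change; the paper's proof does not record this and instead argues sequentially.

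However, there are two points where the argument as written is not yet a proof. First, the central reduction hinges on the assertion that $q_p=2l_p-g_{p,p}\in\Z/24$ ``transforms linearly in the $\alpha_p$.'' You say you would derive this by the distributivity formula and ``distill'' it, but the computation is not done, and the paper explicitly warns, immediately before this lemma, that $l_p$ itself is \emph{not} linear under a change of basis and records explicit nonlinear transformation rules for it. It is not obvious that the combination $2l_p-g_{p,p}$ removes all of the nonlinearity, nor that the resulting functional lives naturally on $H_4(M_k)$ evaluated at the $\alpha_p$: if one feeds the paper's listed formulas into your definition, the new $q_k$ picks up contributions from $q_{k-1}$ and the $g$'s even when $\alpha_k$ itself is held fixed, which is consistent with a linear dependence on the \emph{dual} basis to $\{\alpha_p\}$ but not on $\alpha_k$. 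This bookkeeping has to be carried out carefully, and the answer changes where the characteristic-vector condition sits and what van der Blij is actually applied to. Second --- and you flag it yourself --- the solvability of $q(v)\equiv-\langle v,v\rangle\pmod{12}$ (or $\bmod\,6$) among primitive characteristic $v$ is precisely where the paper's proof does its work, and you leave it at ``one must use the full compatibility\ldots possibly a direct construction of $\beta_1,\dots,\beta_{k-1}$ rather than the uniform recipe of Proposition~\ref{4betaoddcase}.'' The lemma as stated asks specifically that \eqref{cond4odd} hold and that hypothesis (A) or (B) be achieved; replacing that with an ad hoc $\beta$-construction would prove something else, not Lemma~\ref{choicebas4}. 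In short, the skeleton is plausible and the van der Blij input is a nice addition, but both the linearity claim for $q$ and the low-rank solvability need to be verified before this counts as a proof.
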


\begin{proof}
We already have that if $\alpha_k$ is the Poincar\'{e} dual of $w_4(M_k) \pmod{2}$, \eqref{cond4odd} is satisfied.  We now assume that the basis $\alpha_i$ is chosen such that the induced inner product is diagonal modulo $3$. We note that the change of $l_i$ with a change of basis is not linear, however the following change of basis formulas hold by applying \eqref{htp10-4}. 
\[ (\alpha_k \mapsto - \alpha_k) \to (l_k \mapsto -l_k + g_{k,k}). \] 
\[ \begin{pmatrix}\alpha_{k-1}\mapsto \alpha_{k-1}-\alpha_k\\ \alpha_k \mapsto \alpha_k\end{pmatrix} \to (l_k \mapsto l_k+l_{k-1}+g_{k,k-1}).\]
\[\begin{pmatrix} \alpha_{k-1}\mapsto \alpha_{k-1}-2\alpha_k\\ \alpha_k \mapsto \alpha_k\end{pmatrix} \to (l_k \mapsto l_k+2l_{k-1}+ g_{k-1,k-1}+2g_{k,k-1}).\]
It is now clear that a combination of the above manoeuvres allow us to arrange for $3\mid l_k$. 

Suppose that the basis that the numbers $g_{k,k}$ and $l_k$ obtained above satisfy $4 \mid g_{k,k}$ and $2 \nmid l_k$. As the intersection form is odd, we must have $i$ such that $g_{i,i}$ is odd. The transformation 
\[\begin{pmatrix} \alpha_{i}\mapsto \alpha_{i}-6\alpha_k\\ \alpha_k \mapsto \alpha_k\end{pmatrix} \to (l_k \mapsto l_k+6l_{i}+ 15g_{i,i}+6g_{k,i}),\]
allows us to make $6 \mid l_k$.
\end{proof}

Now consider the case when the intersection form is even. Assuming $L(M_k)$ as in \eqref{4lmodd} we note that this implies that the $g_{i,i}$ are even for all $i$. We now adapt the formula \eqref{beta4} in this case to prove the following result. 

\begin{prop}\label{4betaevencase}
Suppose that $k\geq 2$, $24\mid g_{k,k}$, and $l_k=0$. Under this assumption we may choose $\beta_i\in H_{2n-1}(M_k)$ such that the hypothesis of Proposition \ref{spseqarg} is satisfied.  
\end{prop}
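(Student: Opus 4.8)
The plan is to adapt the argument of Theorem \ref{sc4mfld} (the formula \eqref{beta4}) to $n=4$; the point is that an even intersection form is exactly what makes the halved coefficients $g_{i,i}/2$ integral. Write $g_{i,i}=2m_i$; since $24\mid g_{k,k}$ we have $12\mid m_k$, hence $3\mid m_k$. Using $[\alpha_i,\alpha_i]=2\nu_i+\nu'_i$ from \eqref{htp7-4}, one may rewrite $L(M_k)$ of \eqref{4lmodd} as $L(M_k)=\overline{L}(M_k)+\sum_{i=1}^{k}(l_i-m_i)\nu'_i$, where $\overline{L}(M_k)=\sum_{i<j}g_{i,j}[\alpha_i,\alpha_j]+\sum_i m_i[\alpha_i,\alpha_i]$ is a pure Lie element; the $\nu'_k$-summand drops out, because $l_k=0$ and $12\mid m_k$ while $\nu'_k$ has order dividing $12$.

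First I would define, in analogy with \eqref{beta4},
\[\beta'_i=\sum_{j=1}^{k-1}g_{i,j}[\alpha_j,\alpha_k]+\sum_{j=1}^{i-1}g_{i,j}[\alpha_i,\alpha_j]+\sum_{j=i+1}^{k}g_{i,j}\nu_j,\]
and expand $\sum_{i=1}^{k-1}[\alpha_i,\beta'_i]$ using the Jacobi identity \eqref{jacalpha}, skew-symmetry, the Hilton formula \cite[Theorem 6.1]{Hil54} (which for $S^4$ introduces composites with $\nu_{(7)}$, and via $\nu'_{(7)}=-2\nu_{(7)}$ with $\nu'_{(7)}$), and the relations of \eqref{htp10-4}. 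The principal terms assemble to $-[\,\overline{L}(M_k),\alpha_k]$; the remaining terms are a multiple of the $\Z/3$-class $y_k=[[\iota_4,\iota_4],\iota_4]$ (pushed into $M_k$) carrying coefficient $m_k$, which vanishes since $3\mid m_k$; a multiple of the $\Z/24$-class $x_k=\nu\circ\nu_{(7)}$ carrying a coefficient divisible by $g_{k,k}$, which vanishes since $24\mid g_{k,k}$; and various composite terms such as $[\alpha_i,\alpha_j]\circ\nu'_{(7)}$ and $\nu_i\circ\nu_{(7)}$. These composite terms, together with the discrepancy $\sum_{i<k}(l_i-m_i)[\nu'_i,\alpha_k]$ between $[\,\overline{L}(M_k),\alpha_k]$ and $[L(M_k),\alpha_k]$, can be absorbed by perturbing $\beta'_i$ to $\beta_i=\beta'_i+(\text{a }\Z\text{-linear combination of the }\nu'_j,\ \nu_i,\ [\alpha_i,\alpha_i])$, chosen exactly as in the last paragraph of the proof of Proposition \ref{4betaoddcase}, so that $\sum_{i=1}^{k-1}[\alpha_i,\beta_i]=-[L(M_k),\alpha_k]+c\,L(M_k)\circ\nu'_{(7)}$ for some $c\in\Z$. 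Both of these vanish in $\pi_{10}(M_k)$, because the composite $S^{7}\stackrel{L(M_k)}{\to}(S^4)^{\vee k}\to M_k$ is null; hence the map $f\colon E_k\to M_k$ sending $\mu_i\mapsto\alpha_i$, $\delta_i\mapsto\beta_i$ extends over the top cell of $E_k$ in \eqref{connsumsph}.

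It then remains to verify the hypotheses of Proposition \ref{spseqarg}. Condition (1) is immediate, since $f_\ast$ is the inclusion $H_n(E_k)\hookrightarrow H_n(M_k)$ on the classes $\mu_i$. For condition (2), observe that $\rho(f_\ast\mu_i)=a_i$ for $1\le i\le k-1$, and that every correction term added to $\beta'_i$ beyond the leading sum $\sum_j g_{i,j}[\alpha_j,\alpha_k]$ maps under $\rho$ into the subgroup $\Z\{a_1,\cdots,a_{k-1}\}\cdot H_{n-1}(\Omega M_k)$; hence the image of $\rho(\beta_i)$ in the relevant quotient is $-b_i$, exactly as in the proof of Theorem \ref{sphfibloc3}. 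Proposition \ref{spseqarg} then yields $\Fib(f)\simeq S^{3}$.

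The main obstacle I anticipate is the integral torsion bookkeeping in the middle step: unlike in Theorem \ref{sphfibloc} one may not invert $3$, so the $\Z/3$-class $y_k$ and the $\Z/24$-class $x_k$ must be eliminated on the nose. The even hypothesis (which makes each $m_i=g_{i,i}/2$ an integer), the divisibility $24\mid g_{k,k}$, and $l_k=0$ are exactly the conditions needed for this; the delicate point is to arrange the perturbation terms of $\beta_i$ so that every composite term cancels while all coefficients remain integers, and I would expect to need either a short case analysis on $g_{k,k}$ and the $l_i$ modulo small integers, or a perturbation trick of the type used at the end of the proof of Proposition \ref{4betaoddcase}.
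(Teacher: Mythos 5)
Your overall strategy is the same as the paper's: start from the $n=2$ formula \eqref{beta4}, exploit evenness of the form to keep $g_{i,i}/2$ integral, compute $\sum_i[\alpha_i,\beta'_i]$ via \eqref{jacalpha} and \cite[Theorem 6.1]{Hil54}, use $24\mid g_{k,k}$ and $l_k=0$ to kill the residual $x_k$-term, and then perturb by classes with trivial image in the quotient of $H_{2n-2}(\Omega M_k)$ so that condition (2) of Proposition \ref{spseqarg} is unchanged. The decomposition $L(M_k)=\overline{L}(M_k)+\sum_{i<k}(l_i-m_i)\nu'_i$, the observation that the $\nu'_k$-term drops because $12\mid m_k$ and $\nu'_k$ has order dividing $12$, the reduction to $[L(M_k),\alpha_k]$ and $L(M_k)\circ\nu_{(7)}$ (or $\nu'_{(7)}$) which vanish because $L(M_k)$ is nullhomotopic in $M_k$, and the verification of hypothesis (2) are all correct.

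However, the argument has a genuine gap exactly where you flag one: you never exhibit the perturbation, and you refer to the ``last paragraph of the proof of Proposition \ref{4betaoddcase}'' for a formula, but that formula (with its $(1-r)$-factors and the $4\nmid g_{k,k}$ dichotomy) does not apply verbatim to the even case. The cancellation is delicate precisely because one cannot invert $3$, so it must be checked that the coefficients of the $\Z/3$-class $y$ and the $\Z/24$-class $x$ genuinely vanish after the correction, with integral coefficients throughout. The paper's proof handles this by including the $l_i$-term from the start, taking
\[
\beta'_{i} = \sum_{j=1}^{k-1} g_{i,j}[\alpha_{j}, \alpha_{k}] + \sum_{j=1}^{i-1} g_{i,j}[\alpha_i, \alpha_j] + \sum_{j=i+1}^k g_{i,j}\nu_j - l_i \nu'_k ,
\]
and then, after rewriting $g_{i,i}[\alpha_i,\alpha_k]\circ\nu_{(7)}=-\tfrac{g_{i,i}}{2}[\alpha_i,\nu'_k]$, using the explicit perturbation
\[
\beta_i = \beta'_i - \tfrac{g_{i,i}}{2}\,\nu_i + \tfrac{g_{i,i}}{2}\,\nu'_k - l_i[\alpha_i,\alpha_i],
\]
which yields $\sum_{i=1}^{k-1}[\alpha_i,\beta_i]=-[L(M_k),\alpha_k]-L(M_k)\circ\nu_{(7)}$ on the nose, with no case analysis on $g_{k,k}$ or the $l_i$ needed. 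You should carry this computation through rather than defer it; once written out, every coefficient is visibly an integer because $g_{i,i}$ is even, and the $24\mid g_{k,k}$ hypothesis is used only to kill $g_{k,k}x_k$ (and $l_k=0$ kills the remaining $k$-th term), not in the perturbation.
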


\begin{proof}
We define the classes
\[\beta'_{i} = \sum_{j=1}^{k-1} g_{ij}[\alpha_{j}, \alpha_{k}] + \sum_{j=1}^{i-1} g_{i,j}[\alpha_i, \alpha_j] + \sum_{j=i+1}^k g_{i,j}\nu_j - l_i \nu'_k .\]
With this choice we compute using \eqref{htp10-4} as in Proposition \ref{4betaoddcase}
\[
\begin{aligned} 
\sum_{i=1}^{k-1}[\alpha_i, \beta'_i] &= \sum_{i=1}^{k-1}\sum_{j=1}^{k-1} g_{ij}\big[\alpha_i,[\alpha_{j}, \alpha_{k}]\big] +  \sum_{1\leq j<i\leq k-1} g_{i,j}\big[ \alpha_i,[\alpha_i, \alpha_j]\big] \\ 
& + \sum_{1\leq i < j \leq k} g_{i,j}[\alpha_i, \nu_j] - \sum_{i=1}^{k-1} l_i [\alpha_i,\nu'_k]  \\
                              &= \sum_{1\leq i < j \leq k-1} - g_{i,j} \big[[\alpha_i, \alpha_j],\alpha_k\big]  - \sum_{i=1}^{k-1} g_{i,i}[\nu_i, \alpha_k]  + \sum_{i=1}^{k-1} g_{i,i} [\alpha_i, \alpha_k] \circ \nu_{(7)} - \sum_{i=1}^{k-1} l_i [ \nu'_i,\alpha_k]\\
                              & +  \sum_{1\leq j<i\leq k-1} g_{i,j}\big[ \alpha_i,[\alpha_i, \alpha_j]\big]  - \sum_{1\leq i < j \leq k} g_{i,j}\big[[\alpha_i,\alpha_j], \alpha_j] + \sum_{1\leq i < j \leq k} g_{i,j} [\alpha_i,\alpha_j]\circ \nu_{(7)} \\ 
                                      &=    -[L(M_k),\alpha_k] + g_{k,k} [\nu_k,\alpha_k] + L(M_k)\circ \nu_{(7)} + \sum_{i=1}^{k-1} g_{i,i} [\alpha_i,\alpha_k] \circ \nu_{(7)} \\
&- \sum_{i=1}^{k-1} g_{i,i}x_i - \sum_{i=1}^{k-1} l_i y_i - g_{k,k}x_k \\ 
                                      &= -[L(M_k),\alpha_k]  + L(M_k)\circ \nu_{(7)} + \sum_{i=1}^{k-1} g_{i,i} [\alpha_i,\alpha_k] \circ \nu_{(7)} - \sum_{i=1}^{k-1} g_{i,i} x_i  - \sum_{i=1}^{k-1} l_i y_i +  g_{k,k}x_k .
\end{aligned} \]
As $24\mid g_{k,k}$, $g_{k,k}x_k=0$. We write
\[
\begin{aligned}
 g_{i,i} [\alpha_i,\alpha_k] \circ \nu_{(7)} &= -\frac{g_{i,i}}{2} [\alpha_i,\alpha_k] \circ \nu'_{(7)} \\
 &= -\frac{g_{i,i}}{2} [\alpha_i,\nu'_k].
\end{aligned}\]
We now define $\beta_i$ by perturbing $\beta'_i$ as 
\[ \beta_i = \beta'_i - \frac{g_{i,i}}{2}\cdot \nu_i +  \frac{g_{i,i}}{2}\cdot \nu'_k - l_i[\alpha_i,\alpha_i], \]
and from the formulas \eqref{htp10-4}, it follows that 
\[\sum_{i=1}^{k-1} [\alpha_i,\beta_i] =- [L(M_k),\alpha_k] - L(M_k)\circ \nu_{(7)}.\]
Clearly the $\beta_i$ satisfy the hypothesis of Proposition \ref{spseqarg}. 
\end{proof}
We now summarize the computations in the following theorem. 
\begin{theorem}\label{sphfib4}
Let $M_k\in \PP \DD_3^8$, that is, $H_4(M_k)= \Z^k$ for $k\geq 2$. Such an $M_k$ supports a $S^3$-fibration $S^{3}\to E_k\to M_k$ with $E_k \simeq \#^{k-1}(S^4 \times S^7)$. 
\end{theorem}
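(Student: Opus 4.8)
The plan is to assemble Theorem \ref{sphfib4} from the case analysis established above together with a single application of Proposition \ref{spseqarg}. Since $n=4$, the fibre $S^{n-1}$ is $S^3$, and $E_k$ is by definition homotopy equivalent to $\#^{k-1}(S^4\times S^7)$. Following the proof of Theorem \ref{sphfibloc3}, I would use the pushout \eqref{connsumsph} with $n=4$: the $10$-skeleton of $E_k$ is $(S^4)^{\vee k-1}\vee (S^7)^{\vee k-1}$ and the $11$-cell is attached along $\sum_{i=1}^{k-1}[\iota^{4}_{i},\iota^{7}_{i}]$. So a map $f\colon E_k\to M_k$ is determined by the choices $\mu_i\mapsto\alpha_i$ and $\delta_i\mapsto\beta_i$ with $\beta_i\in\pi_{7}(M_k)$, and $f$ extends over the top cell exactly when $\sum_{i=1}^{k-1}[\alpha_i,\beta_i]=0$ in $\pi_{10}(M_k)$. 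If, moreover, the $\beta_i$ are chosen so that their images under $\rho$ complete $a_1,\dots,a_{k-1}$ appropriately --- precisely, so that hypotheses (1)--(2) of Proposition \ref{spseqarg} hold --- then (its argument being valid integrally here, as in Examples \ref{connsumhp} and \ref{even2}, because $H_*\Omega M_k$ and $H_*\Omega E_k$ are torsion-free) Proposition \ref{spseqarg} gives $H_*\Fib(f)\cong H_*(S^3)$. Since $M_k$ is $3$-connected and $E_k$ simply connected, $\Fib(f)$ is a simply connected homology $3$-sphere, hence $\simeq S^3$, and $S^3\to E_k\xrightarrow{f}M_k$ is the desired fibration. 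Thus everything reduces to producing the $\beta_i$.

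I would then split on the parity of the intersection form of $M_k$. If the form is odd (equivalently $w_4(M_k)\neq 0$): Lemma \ref{choicebas4} provides, for every $k\geq2$, a basis $\alpha_1,\dots,\alpha_k$ satisfying \eqref{cond4odd} and one of the divisibility alternatives (A) $6\mid l_k$ or (B) $4\nmid g_{k,k}$ and $3\mid l_k$, and then Proposition \ref{4betaoddcase} delivers the $\beta_i$ (with $\sum_{i=1}^{k-1}[\alpha_i,\beta_i]=0$ in $\pi_{10}(M_k)$, since $L(M_k)$ is null in $M_k$). If the form is even: for $k=2$ it is hyperbolic, and the explicit formulas of Example \ref{even2}, organized by the residue classes of $l_1$ and $l_2$ together with the symmetries noted there, produce $\mu_1,\delta_1$ and hence $f$ directly. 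For $k\geq3$ with an even form, I would first prove --- in the spirit of Lemma \ref{choicebas4} but now for the even case --- that a basis can be chosen with $24\mid g_{k,k}$ and $l_k=0$: an even unimodular form of rank $\geq3$ contains a hyperbolic plane, which lets one arrange $g_{k,k}=0$, and the additional basis vectors available when $k\geq3$ give enough room to clear $l_k$ by the (non-linear) change-of-basis moves recorded in the proof of Lemma \ref{choicebas4}, using the relations \eqref{htp7-4} and \eqref{htp10-4}; Proposition \ref{4betaevencase} then finishes this case.

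The bulk of each case --- verifying $\sum_{i=1}^{k-1}[\alpha_i,\beta_i]=0$ and the homology condition --- has already been carried out in Propositions \ref{4betaoddcase} and \ref{4betaevencase} and in Example \ref{even2}, using the Jacobi identity for Whitehead products and the tables \eqref{htp7-4}--\eqref{htp10-4} for $\pi_7(S^4)$ and $\pi_{10}(S^4)\cong\Z/24\oplus\Z/3$. The one genuinely new ingredient, and the step I expect to be the main obstacle, is the basis normalization in the even case for $k\geq3$: because $l_k\in\Z/12$ does not transform linearly under $\mathrm{GL}_k(\Z)$ --- only the $g_{i,j}$ obey the obvious quadratic transformation law --- showing that $24\mid g_{k,k}$ and $l_k=0$ can be achieved simultaneously requires a careful but elementary bookkeeping argument with the explicit transformation formulas, parallel to, though somewhat more delicate than, the odd-form argument behind Lemma \ref{choicebas4}.
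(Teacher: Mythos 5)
Your assembly of the odd case (Lemma \ref{choicebas4} plus Proposition \ref{4betaoddcase}) and of the even $k=2$ case (Example \ref{even2}) matches the paper. The gap is in your treatment of the even case for $k\geq 3$, and it is real.

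First, the statement that ``an even unimodular form of rank $\geq 3$ contains a hyperbolic plane, which lets one arrange $g_{k,k}=0$'' is false: the $E_8$ form is even, unimodular, positive definite, and has no nonzero isotropic vector, yet it can occur as the (inverse) intersection form of an $M_8\in\PP\DD_3^8$. (Also, even unimodular forms have even rank, so the relevant range is $k\in\{2,4,6,\dots\}$, never $k=3$.) One must instead aim for the weaker condition $24\mid g_{k,k}$ required by Proposition \ref{4betaevencase}, and the paper obtains this via Lemma \ref{quadform3} only when the rank of the complementary summand is large enough (the paper requires $k\geq 6$); the remaining case $k=4$ (necessarily $H\oplus H$) gets a separate ad hoc argument, which your proposal does not supply.

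Second, and more fundamentally, the ``careful but elementary bookkeeping'' you anticipate as the main obstacle is exactly what the paper's proof is designed to avoid. The key device is to use the evenness of the diagonal $g_{i,i}$ to rewrite
\[
L(M_k)=\sum_{1\leq i<j\leq k} g_{i,j}[\alpha_i,\alpha_j]+\sum_{i=1}^{k}\tfrac{g_{i,i}}{2}[\alpha_i,\alpha_i]+\sum_{i=1}^{k} s_i\,\nu'_i ,
\]
which replaces the coefficients $l_i$ (which, as you correctly note, do not transform linearly under change of basis because of $[\iota_4,\iota_4]=2\nu+\nu'$ and the identities \eqref{htp10-4}) by coefficients $s_i$ that do transform linearly $\bmod\ 12$: the quadratic part $\sum g_{i,j}[\alpha_i,\alpha_j]/2$ is a basis-independent expression and the correction term $\sum s_i\nu'_i$ is a linear functional of the basis because composition $\alpha\mapsto\alpha\circ\nu'$ is (left-)additive. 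This lets one first normalize $\sum s_i\nu'_i=d\tau$ with $\tau$ a primitive vector chosen as $\alpha_1$, so that $s_i=0$ for $i\geq 2$, and then independently pick $\alpha_k=v$ inside the span of $\alpha_2,\dots,\alpha_k$ with $24\mid\langle v,v\rangle$ when $k\geq 6$. Without this linearization your proposed nonlinear bookkeeping has no clear endpoint, and in particular the claim that $24\mid g_{k,k}$ and $l_k=0$ can be achieved simultaneously for all $k\geq 4$ is not something I believe one can get by the moves in Lemma \ref{choicebas4} alone; you need both the $s_i$ reparametrization and the explicit handling of $k=4$.

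Everything else in your proposal (the reduction to constructing $\beta_i$ satisfying $\sum[\alpha_i,\beta_i]=0$ and the homology condition, and then invoking Proposition \ref{spseqarg} to identify the fibre as $S^3$) is in line with the paper's proof.
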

\begin{proof}
We follow the proof of Theorem \ref{sphfibloc}. The choice of $\alpha_i$ and $\beta_i$ are made in Proposition \ref{4betaoddcase} and Lemma \ref{choicebas4} if the intersection form is not even, and in Proposition \ref{4betaevencase} if the intersection form is even. In the latter case, we need to arrange that $24\mid g_{k,k}$. For this, we use the fact that $g_{i,i}$ is even to rewrite 
\[
L(M_k)=\sum_{1\leq i < j \leq k} g_{i,j}[\alpha_i,\alpha_j] + \sum_{i=1}^k \frac{g_{i,i}}{2} [\alpha_i,\alpha_i] + \sum_{i=1}^k s_i \nu'_i.
\]
Note that the $s_i$ change linearly with $\alpha_i$ $\pmod{12}$. Now we write 
\[ s_1 \nu'_1+\cdots + s_k \nu'_k = d \tau \]
where $d$ equals the greatest common divisor of the $s_i$. Extending $\tau$ to a basis we assume that $s_i=0$ if $i\geq 2$. Consider the summand spanned by the last $k-1$ basis elements. By Lemma \ref{quadform3}, we obtain a primitive $v\neq 0$ such that $\langle v,v\rangle$ is divisible by $24$ if $k\geq 6$. Extend $\tau,v$ to a basis, to verify the required criteria.  For $k\leq 5$, we are done by the classification in \cite[Ch II (2.2)]{MiHu73}. More precisely, if $k\leq 5$, $k$ can be $2$ or $4$, and in each case the intersection form is a direct sum of copies of the hyperbolic form. If $k=2$, we are done by Example \ref{even2}. If $k=4$, we consider the $\tau$ above and note that by adding multiples of $12$ to the $s_i$, and the fact that their $\gcd$ is $1$, we may choose two $s_{i_1},s_{i_2}$ among $s_1,\cdots s_4$ that are relatively prime. Choosing $v=s_j$, for $j\not\in \{i_1,i_2\}$, allows us to proceed as before. Therefore, we have a fibration $E_k \to M_k$ for $E_k \simeq \#^{k-1}(S^4\times S^7)$ with fibre $S^3$. 
\end{proof}

\end{mysubsection}

\begin{mysubsection}{$7$-connected $16$-manifolds}
One may approach  results for $\PP\DD_7^{16}$ in an analogous manner to those for $\PP \DD_3^8$, and expect that the existence of the Hopf invariant one class $\sigma \in \pi_{15}(S^8)$ will allow us to construct integral versions of Theorem \ref{sphfibloc}. However, this is not the case. First we note some formulas for Whitehead products and compositions in the homotopy groups of $S^8$.  \cite[(5.16)]{Tod62}, \cite[(7.4)]{Mim67}
\begin{myeq}\label{htp22-8}
\begin{aligned}
\pi_{15}S^8 \cong \Z\{\sigma\}\oplus \Z/(120)\{\sigma'\},~~   [\iota_8,\iota_8]=2\sigma - \sigma',\\
\pi_{22}S^8 \cong \pi_{22}(S^{15})\oplus \pi_{21}(S^7)=\Z/(240)\{z\}\oplus \Z/(24)\{u\} \oplus \Z/4\{v\}, \\
 z= \sigma\circ \sigma_{(15)}, ~ u= \sigma'\circ \sigma_{(15)}, \\
 [\sigma,\iota_8]=2z- u \pm 8u,~\big[[\iota_8,\iota_8],\iota_8\big]=\pm 8 u.
\end{aligned}
\end{myeq}
From the formula \eqref{htp22-8}, one can easily deduce via a direct calculation that it is not possible to construct a fibration $S^7 \to S^8\times S^{15} \to \cO P^2 \# \overline{\cO P^2}$. For example, the formulas in Example \ref{connsumhp} does not generalize here, as the terms $[\iota_8,\sigma]$ contain multiples of $u$ which are not expressible in the form $\sigma \circ -$. It is possible to lay down conditions under which the formulas do yield the desired fibrations. 
 We leave the study of these integral fibrations for a future publication.

\end{mysubsection}

\begin{mysubsection}{$5$-connected $12$-manifolds}
For manifolds in $\PP \DD^{12}_5$ we know that the fibration $S^5 \to E_k \to M_k$ cannot exist over the integers as there is no Hopf invariant one class in $\pi_{11}(S^6)$. Therefore we have to invert $2$. However, $\pi_{11}S^6 = \Z$, so it is not necessary to invert anything else. 

\end{mysubsection}

\section{Sphere fibrations for manifolds with high Betti number} \label{highbetti}
In this section, we improve the results of \S \ref{loccat} in the sense that we reduce the number of primes that are needed in the localization. We show that for $k$ greater than the number of cyclic summands in $\pi_{n-1}^s\otimes \Z[\frac{1}{2}]$, the sphere fibrations exist once $2$ is inverted. Further if $3$ is inverted, $S^{n-1}$ is an $A_3$-space \cite{Ada61,Zab70}, and in this case, these fibrations are obtained as a pullback of the $S^{n-1}$-fibration over the associated projective plane \cite{Sta63}.  Throughout this section we work in the category of spaces with $2$ inverted, and write $R_2 = \Z[1/2]$ and $R_{2,3} = \Z[1/2,1/3]$. 

\begin{mysubsection}{Whitehead products in $\pi_{2n-1} S^n$} 
After inverting the prime $2$, for $n$ even, $\Omega S^n$ splits into  a product $\Omega S^{2n-1} \times S^{n-1}$. The map $S^{2n-1} \to S^n$ which induces the inclusion of $\Omega S^{2n-1}$ may be chosen to be $\frac{1}{2}[\iota_n,\iota_n]$. In these terms we have 
\begin{myeq}\label{gen2n-1}
\pi_{2n-1}(S^n)\otimes R_2 \cong R_2\{ [\iota_n,\iota_n]\} \oplus E(\pi_{2n-2}(S^{n-1})\otimes R_2).
\end{myeq} 
We now note the following formula for the Whitehead products for the generators in \eqref{gen2n-1} using the Jacobi identity and \cite[Theorem 6.1]{Hil54}
\begin{myeq}\label{wh2n-1}
3\big[\iota_n,[\iota_n,\iota_n]\big]=0,~ ~ [\iota_n, E\alpha] = [\iota_n,\iota_n]\circ \Sigma^n \alpha ~ \forall ~ \alpha\in \pi_{2n-2}S^{n-1} \otimes R_2. 
\end{myeq}
The last equation is implied by the fact that the Hopf invariant of $E(\alpha)$ is $0$. It is worthwhile to note here that $\Sigma^n \alpha \in \pi_{3n-2}(S^{2n-1})$ belongs to the stable range, so that the right hand side of the second equation of \eqref{wh2n-1} is non-zero only when $\alpha$ represents a non-trivial stable homotopy class. 

\end{mysubsection}

\begin{mysubsection}{Constructing spherical fibrations after inverting $2$}
As in \S \ref{loccat}, by obstruction theory we construct a map $E_k\simeq \#^{k-1}(S^n \times S^{2n-1})\to M_k$ for $M_k \in \PP \DD^{2n}_{n-1}$, which means $H_n(M_k)\cong \Z^k$. One should note that unless we invert $2$, the homomorphism 
\[ \pi_s(S^{n-1}) \to \pi_s(\Omega M_k) \cong \pi_{s+1}(M_k) \]
associated to a summand $S^n \to (S^n)^{\vee k} \to M_k$ has non-trivial kernel. This follows from the EHP-sequence for a sphere \cite{Jam57Ann} and the fact that the inclusion of a sphere induces a summand on the level of homotopy groups \cite{BaBa18, BeTh14}.  After inverting $2$, the kernel vanishes. In this situation, we enumerate criteria under which it is possible to construct a map $E_k\to M_k$.  Recall, the notations $\mu_i, \delta_i$ of homology generators of $E_k$ and $\alpha_i$ of $M_k$, and the attaching map $L(M_k)$ from \S \ref{alglem}. 
\begin{prop}\label{obstn}
Suppose that the attaching map $L(M_k) \in \pi_{2n-1}\big((S^n)^{\vee k}\big)$ of $M_k$ takes the form \eqref{LMform} (for an invertible integer matrix $\big( ( g_{i,j})\big)$) 
\begin{myeq}\label{LMgenform}
 L(M_k)= \sum_{1\leq i<j \leq k} g_{i,j} [\alpha_i, \alpha_j] +\sum_{i=1}^k  \big( g_{i,i} (\frac{1}{2} [\alpha_i,\alpha_i]) + \alpha_i \circ \omega_i \big), \mbox{ for } \omega_i \in E(\pi_{2n-2}(S^{n-1})).
 \end{myeq}
Assume that this satisfies \\
1. $g_{k,k} \equiv 0 \pmod{3}$. \\
2. $\omega_k$ lies in the kernel of $\Sigma : \pi_{2n-1}(S^n) \to \pi_{n-1}^s$. \\
Then, there is a map $E_k \to M_k$ which sends $\mu_i $ to $\alpha_i$, and that $\beta_i$ satisfy the conditions of Proposition \ref{spseqarg}. 
\end{prop}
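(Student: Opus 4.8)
The plan is to follow the obstruction-theoretic construction in the proof of Theorem~\ref{sphfibloc3}, working throughout in the category with $2$ inverted and using the pushout~\eqref{connsumsph}, whose $(3n-2)$-skeleton is $(S^n)^{\vee k-1}\vee (S^{2n-1})^{\vee k-1}$ with top cell attached along $\sum_{i=1}^{k-1}[\iota_i^n,\iota_i^{2n-1}]=\sum_{i=1}^{k-1}[\mu_i,\delta_i]$. Fix a basis $\alpha_1,\dots,\alpha_k$ of $H_n(M_k)$ realizing the form~\eqref{LMgenform} together with hypotheses (1)--(2). Define $f$ on the skeleton by sending the $i$-th $S^n$-summand to $\alpha_i$ for $1\le i\le k-1$ and the $i$-th $S^{2n-1}$-summand to
\[ \beta_i \;:=\; \beta_i^0-\alpha_k\circ\omega_i,\qquad \beta_i^0\;:=\;\sum_{j=1}^{k-1}g_{i,j}[\alpha_j,\alpha_k]+\tfrac12 g_{i,k}[\alpha_k,\alpha_k], \]
where $\beta_i^0$ is the same lift of the element $w_i$ of~\eqref{loopbi} used in Theorem~\ref{sphfibloc3} ($\tfrac12[\alpha_k,\alpha_k]$ being legitimate once $2$ is inverted), and the correction term $-\alpha_k\circ\omega_i$ is designed to absorb the composition summands $\alpha_i\circ\omega_i$ of $L(M_k)$ with $i<k$.

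The crux is to show that $f$ extends over the top cell, i.e.\ that the obstruction $f_*\big(\sum_{i=1}^{k-1}[\mu_i,\delta_i]\big)=\sum_{i=1}^{k-1}[\alpha_i,\beta_i]\in\pi_{3n-2}(M_k)$ vanishes. Expanding $\sum_{i<k}[\alpha_i,\beta_i^0]$ by the Jacobi identity~\eqref{jacalpha} and the skew-symmetry of Whitehead products, exactly as in the proof of Proposition~\ref{alglem}, and using that $L(M_k)$ is null-homotopic in $M_k$ (so $[L(M_k),\alpha_k]=0$), one arrives at
\[ \sum_{i=1}^{k-1}[\alpha_i,\beta_i^0]\;=\;\sum_{i=1}^{k}[\alpha_i\circ\omega_i,\alpha_k]\;+\;\tfrac12 g_{k,k}\big[[\alpha_k,\alpha_k],\alpha_k\big]. \]
Since each $\omega_i$ lies in $E(\pi_{2n-2}(S^{n-1}))$, hence is a suspension with vanishing James--Hopf invariants, the Hilton expansion of the Whitehead product of a composite has no secondary terms in this dimension --- this is the mechanism behind the second identity of~\eqref{wh2n-1} (cf.\ \cite[Theorem~6.1]{Hil54}) --- so $[\alpha_i\circ\omega_i,\alpha_k]=[\alpha_i,\alpha_k]\circ\Sigma^{n-1}\omega_i=[\alpha_i,\alpha_k\circ\omega_i]$. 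Hence $[\alpha_i,\beta_i]=[\alpha_i,\beta_i^0]-[\alpha_i\circ\omega_i,\alpha_k]$ for $i<k$, the terms with $i<k$ cancel, and
\[ \sum_{i=1}^{k-1}[\alpha_i,\beta_i]\;=\;[\alpha_k\circ\omega_k,\alpha_k]\;+\;\tfrac12 g_{k,k}\,\alpha_k\circ\big[[\iota_n,\iota_n],\iota_n\big]. \]
The first summand equals $[\alpha_k,\alpha_k]\circ\Sigma^{n-1}\omega_k$, and $\Sigma^{n-1}\omega_k\in\pi_{3n-2}(S^{2n-1})$ is in the stable range, hence coincides with the image of $\omega_k$ under $\Sigma\colon\pi_{2n-1}(S^n)\to\pi_{n-1}^s$, which is $0$ by hypothesis~(2). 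The second summand vanishes since $3\big[[\iota_n,\iota_n],\iota_n\big]=0$ by~\eqref{wh2n-1} and $3\mid g_{k,k}$ by hypothesis~(1) (the factor $\tfrac12$ being harmless after inverting $2$). So the obstruction dies and $f$ extends to $E_k\to M_k$.

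It remains to verify the two conditions of Proposition~\ref{spseqarg} for $f$. Condition~(1) is immediate: $\alpha_1,\dots,\alpha_{k-1}$ were chosen to span a direct summand of $H_n(M_k)$ complementary to $R_2\{\alpha_k\}$, so $f_*$ on $H_n$ is the inclusion of that summand, with $\lambda_k=\alpha_k$. For condition~(2), the vanishing of the Hopf invariant of $\omega_i$ gives $\rho(\alpha_k\circ\omega_i)=0$, whence $\rho(\beta_i)=\rho(\beta_i^0)=-b_i$, the class attached to $w_i$ in Proposition~\ref{alglem}; the same linear-algebra argument as in Theorem~\ref{sphfibloc3} --- invertibility of $(g_{i,j})$ together with Proposition~\ref{alglem}(2) --- then shows that the $\rho(\beta_i)$ map isomorphically onto $H_{2n-2}(\Omega M_k)\big/\big(\langle a_1,\dots,a_{k-1}\rangle\cdot H_{n-1}(\Omega M_k)\big)$.

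I expect the real obstacle to be the middle step: running the Jacobi bookkeeping cleanly, and --- more delicately --- justifying that in $[\alpha_i\circ\omega_i,\alpha_k]$ all the secondary Hilton terms that a priori live in $\pi_{3n-2}(M_k)$ (such as iterated brackets $[\alpha_i,[\alpha_i,\alpha_k]]$) are annihilated by the vanishing James--Hopf invariants of the suspension class $\omega_i$, so that the composite formula $[\alpha_i\circ\omega_i,\alpha_k]=[\alpha_i,\alpha_k]\circ\Sigma^{n-1}\omega_i$ holds on the nose.
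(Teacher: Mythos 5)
Your proposal is correct and follows the paper's own proof essentially verbatim: the same choice $\beta_i=\beta_i^0-\alpha_k\circ\omega_i$, the same Jacobi/Whitehead bookkeeping leading to $-[L(M_k),\alpha_k]+\tfrac12 g_{k,k}[\alpha_k,[\alpha_k,\alpha_k]]+[\alpha_k\circ\omega_k,\alpha_k]$, and the same appeal to \eqref{wh2n-1} plus hypotheses (1)--(2) to kill it, together with $\rho(\alpha_k\circ\omega_i)=0$ (vanishing Hopf invariant of a suspension) to verify the hypotheses of Proposition~\ref{spseqarg}. The only difference is that you spell out more explicitly the Barcus--Barratt identity $[\alpha_i\circ\omega_i,\alpha_k]=[\alpha_i,\alpha_k]\circ\Sigma^{n-1}\omega_i=[\alpha_i,\alpha_k\circ\omega_i]$ used to swap $\omega_i$ across the bracket, which the paper invokes tacitly via \eqref{wh2n-1} and \cite[Theorem 6.1]{Hil54}.
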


\begin{proof}
We define the $\beta$-classes as in \eqref{betasimple} 
\begin{myeq} \label{betagen}
\beta_i = \sum_{j=1}^{k-1} g_{i,j} [ \alpha_j,\alpha_k] + \frac{1}{2} g_{i,k}[\alpha_k, \alpha_k] - \alpha_k \circ \omega_i. 
\end{myeq} 
Observe that the elements $\omega_i$ are in the kernel of $\rho : \pi_{2n-1}(S^n) \to H_{2n-2}(\Omega S^n)$. Thus, the $\beta_i$ of \eqref{betagen} have the same image in $H_\ast \Omega M_k$ as those of \eqref{betai}, and so it satisfies the criteria of Proposition \ref{spseqarg}. We complete the proof by noting
\[
\begin{aligned} 
\sum_{i=1}^{k-1}[\alpha_i, \beta_i] &= \sum_{i=1}^{k-1}\sum_{j=1}^{k-1} g_{i,j}\big[\alpha_i,[\alpha_{j}, \alpha_{k}]\big] +  \sum_{i=1}^{k-1} \frac{1}{2}g_{i,k}\big[ \alpha_i,[\alpha_k, \alpha_k]\big] - \sum_{ i=1}^{k-1} [\alpha_i, \alpha_k \circ \omega_i]   \\
                              &= \sum_{1\leq i < j \leq k} - g_{i,j} \big[[\alpha_i, \alpha_j],\alpha_k\big] - \sum_{i=1}^{k-1} \frac{1}{2}g_{i,i}\big[[\alpha_i,\alpha_i], \alpha_k\big]  -  \sum_{i=1}^{k-1} [\alpha_i\circ \omega_i, \alpha_k] \\            
                                      &=  -  [L(M_k),\alpha_k] + \frac{1}{2} g_{k,k}\big[\alpha_k, [\alpha_k, \alpha_k]\big] + [\alpha_k\circ \omega_k, \alpha_k] \\ 
                                       &= 0.
\end{aligned} \]
The last equality follows from \eqref{wh2n-1} and the hypothesis 1 and 2 of the proposition.
\end{proof}

Proposition \ref{obstn} lays down the conditions we need to arrange in order to construct a map $E_k \to M_k$ whose homotopy fibre is $S^{n-1}$. For a finite Abelian group $A$, we define the number of cyclic summands to be the number $r$ in it's decomposition as 
\[A \cong \Z/(a_1)\oplus \Z/(a_2) \cdots \oplus \Z/(a_r), \mbox{ with } a_i \mid a_{i+1}.\]
In this notation, we have the following theorem. 
\begin{theorem}\label{sphfibgen}
Let $M_k\in \PP \DD_{n-1}^{2n}$  with $H_nM_k\cong \Z^k$, and $n>8$. Let $r$ be the number of cyclic summands of $\pi_{n-1}^s$. If $k>r$,  after inverting $2$ there is a fibration $E_k \to M_k$ with fibre $S^{n-1}$ where $E_k\simeq \#^{(k-1)}(S^{n}\times S^{2n-1})$.
\end{theorem}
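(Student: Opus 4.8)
\emph{Proof plan.} The strategy is to reduce the statement to Propositions~\ref{obstn} and~\ref{spseqarg} by choosing a suitable basis of $H_n(M_k)$. Working with $2$ inverted, \eqref{gen2n-1} gives $\pi_{2n-1}(S^n)\otimes R_2\cong R_2\{[\iota_n,\iota_n]\}\oplus E(\pi_{2n-2}(S^{n-1})\otimes R_2)$, so for every basis $\alpha_1,\dots,\alpha_k$ of $H_n(M_k)$ the attaching map $L(M_k)$ takes the form \eqref{LMgenform}, with $(g_{i,j})$ the inverse of the unimodular intersection matrix and $\omega_i\in E(\pi_{2n-2}(S^{n-1})\otimes R_2)$. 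By Proposition~\ref{obstn} it suffices to choose the basis so that (1)~$g_{k,k}\equiv 0\pmod 3$ and (2)~$\omega_k$ lies in the kernel of the stabilisation $\Sigma\colon \pi_{2n-1}(S^n)\to\pi_{n-1}^s$. Once this is arranged, Proposition~\ref{obstn} produces a map $f\colon E_k\to M_k$ with $E_k\simeq\#^{k-1}(S^n\times S^{2n-1})$ whose classes $\beta_i$ satisfy the hypotheses of Proposition~\ref{spseqarg}, and the latter identifies $\Fib(f)$ as a space with the $R_2$-homology of $S^{n-1}$; since $n>8$ all the spaces involved (the fibre included, exactly as at the end of the proof of Theorem~\ref{sphfibloc3}) are simply connected, so $\Fib(f)$ is $R_2$-equivalent to $S^{n-1}$ and $E_k\to M_k$ is the desired fibration.

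Condition~(2) is handled by a stabilisation and lattice argument. Whitehead products suspend to zero, so the stable class of $L(M_k)$ is $\sum_i\Sigma(\omega_i)\otimes\alpha_i\in\pi_{n-1}^s\otimes H_n(M_k)$, a basis-independent invariant. Using Poincar\'e duality to identify $H_n$ with $H^n$ and contracting this tensor, we obtain a homomorphism $\theta\colon H_n(M_k)\to A$, where $A:=\pi_{n-1}^s\otimes R_2$ has at most $r$ cyclic summands, and for a basis vector $v$ one has $\omega_k\in\ker\Sigma$ precisely when $v\in\ker\theta$. Since $\operatorname{im}\theta\subseteq A$ needs at most $r$ generators, a Smith normal form computation for $\ker\theta\subseteq\Z^k$ yields a direct summand $\Lambda\subseteq\Z^k$ of rank $m\geq k-r\geq 1$ contained in $\ker\theta$; any primitive vector of $\Lambda$ is primitive in $\Z^k$, so taking $\alpha_k$ to be such a vector settles~(2).

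It remains to make this last choice compatible with~(1): I want a primitive $v\in\Lambda$ with $3\mid G(v,v)$, where $G$ is the restriction to $\Lambda$ of the unimodular form $(g_{i,j})$. If $m\geq 3$ this is exactly Lemma~\ref{quadform3}(a). The corner cases $m\in\{1,2\}$ (which force $r$ close to $k$) are treated as in Example~\ref{sphk=2}: reducing modulo $3$, the only obstruction is that the relevant rank-$\le 2$ piece be anisotropic, i.e.\ equivalent over $\F_3$ to $\langle\pm1\rangle$ or to $\langle 1,1\rangle\cong\langle -1,-1\rangle$; but such a piece yields a class $\alpha\in H_n(M_k)$ with $\langle\alpha,\alpha\rangle=\pm1$, hence a subcomplex $S^n\cup e^{2n}\subseteq M_k$ with cohomology $\Z[x]/(x^3)$, which forces $n\in\{2,4,8\}$ and contradicts $n>8$. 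With $\alpha_1,\dots,\alpha_k$ so chosen, Propositions~\ref{obstn} and~\ref{spseqarg} complete the proof.

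I expect the principal difficulty to be precisely this compatibility, i.e.\ forcing a single basis vector $\alpha_k$ to satisfy both (1) and~(2): the sublattice $\Lambda$ imposed by the stable data must still carry a primitive vector that is isotropic modulo $3$. The range $m\geq 3$ is immediate from Lemma~\ref{quadform3}, but the cases $m\leq 2$ need the classification of low-rank unimodular forms together with the exclusion $n\notin\{2,4,8\}$, and some bookkeeping to check that the vector one picks is primitive inside $\ker\theta$ (which need not be saturated in $\Z^k$) rather than merely inside $\Lambda$.
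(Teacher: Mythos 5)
Your proof takes a genuinely different and conceptually cleaner route than the paper's: rather than the explicit basis manipulations of Example~\ref{cyclic} and Proposition~\ref{basisgr1}, you package the stable obstruction into a single homomorphism $\theta\colon H_n(M_k)\to\pi_{n-1}^s\otimes R_2$, find a large direct summand $\Lambda\subseteq\ker\theta$ by Smith normal form, and reduce the mod-$3$ condition to Lemma~\ref{quadform3}(a) applied to the form restricted to $\Lambda$. For $m=\operatorname{rank}\Lambda\geq 3$ this is tidy and correct, and agrees in spirit with the case $k\geq r+3$ in Proposition~\ref{basisgr1}.

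There is, however, a genuine gap in the cases $m\in\{1,2\}$, which do occur under the hypothesis $k>r$ (for instance $k=2$, $r=1$). You assert that if the rank-$\leq 2$ form on $\Lambda$ is anisotropic modulo $3$ then one obtains a class $\alpha\in H_n(M_k)$ with $\langle\alpha,\alpha\rangle=\pm 1$, hence a Hopf-invariant-one class and $n\in\{2,4,8\}$. This deduction is false: the restriction of the unimodular form to the proper sublattice $\Lambda$ need not be unimodular, so anisotropy modulo $3$ only says the diagonal entries are units in $\F_3$, not $\pm 1$ in $\Z$. Concretely $\Lambda$ can have rank $1$ with self-intersection $4$ (even, anisotropic mod $3$) inside a hyperbolic plane, which forces no Hopf-invariant-one class, and your argument gives no way to proceed. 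The point your reduction discards is exactly what the paper uses in this low-rank regime: one is free to modify the distinguished basis vector by adding arbitrary elements of $\Z^k$, not just elements of $\Lambda$. Because $\omega_k$ is a suspension, precomposition is additive, so replacing $\alpha_k$ by $\alpha_k+\sum_{i<k}c_i\alpha_i$ does not disturb the $\ker\Sigma$ condition on $\omega_k$, and the unimodularity of the \emph{full} form over $\F_3$ is then available for the divisibility argument (this is the content of the case $k<r+3$ in Proposition~\ref{basisgr1}, together with Example~\ref{cyclic} for $r=1$). Restricting the search to $\Lambda$ removes precisely that freedom.

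A secondary imprecision: both the stabilisation invariant and the quantity $g_{k,k}$ are naturally attached to the form-dual vector $\alpha_k^*$ (equivalently, to the rank-$(k-1)$ summand $W=\langle\alpha_1,\ldots,\alpha_{k-1}\rangle$ whose orthogonal complement $\alpha_k^*$ generates), not to $\alpha_k$ itself; your condition ``$v\in\ker\theta$'' and the form you call $G$ should be stated for $\alpha_k^*$. This does not change the shape of the argument for $m\geq 3$, but keeping it straight is what makes visible the extra freedom, used by the paper and missing from your $m\leq 2$ discussion, to vary $\alpha_k$ while leaving $\alpha_k^*$ and $\omega_k$ fixed.
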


\begin{proof}
We apply Proposition \ref{spseqarg} and Proposition \ref{obstn}. We only need to show that one may choose a basis of $\pi_n(M_k)$ such that the hypotheses of Proposition \ref{obstn} are satisfied. This is done for $r=1$ in Example \ref{cyclic} and for $r>1$ in Proposition \ref{basisgr1}
\end{proof}

In the following example we work out the details  when $\pi_{n-1}^s$ is cyclic which is analogous to the methods of \S \ref{lowdim}.

\begin{exam}\label{cyclic}
Suppose that $\pi_{n-1}^s$ is cyclic with generator $\chi$ and of order $d$, and assume as in Proposition \ref{obstn}, 
\[ L(M_k)= \sum_{1\leq i<j \leq k} g_{i,j} [\alpha_i, \alpha_j] + \sum_{i=1}^k  \big( g_{i,i} (\frac{1}{2} [\alpha_i,\alpha_i]) + \alpha_i \circ \omega_i \big), \mbox{ for } \omega_i \in E(\pi_{2n-2}(S^{n-1})).\] 
We assume $k\geq 2$, and show that it is possible to change the basis $\{ \alpha_i\}$ so that the hypothesis 1 and 2 of Proposition \ref{obstn} are satisfied. We may now write (for $\Sigma : \pi_{2n-1}(S^n) \to \pi_{n-1}^s$) 
\[ \Sigma \omega_i = x_i \chi \mbox{  for } x_i \in \Z/d. \]
We take $x$ to be the greatest common divisor of the $x_i$ in $\Z/d$ and write $x_i= c_i x$. The $c_i$ may be arranged so that they have no common divisor over $\Z$. Now change the basis to $\{\alpha_1',\cdots, \alpha_k'\}$ so that the first element is $\alpha_1'=\sum c_i \alpha_i$. In this new basis, we have (for some representative $u\in \pi_{2n-1}(S^n)$ of $\chi\in \pi_{n-1}^s$)
\[ \Sigma (\alpha_1' \circ x u)-\Sigma(\sum_{i=1}^k \alpha_i \circ \omega_i ) =0 .\]
This arranges the $2^{nd}$ hypothesis whenever $k\geq 2$. We now arrange for $g_{k,k}\equiv 0 \pmod{3}$ without changing the first element. If $k\geq 4$, applying Lemma \ref{quadform3} to the summand spanned by the last $k-1$ elements yields a change of basis with $g_{k,k}\equiv 0 \pmod{3}$. It remains to consider the cases $k=2$ and $k=3$. For $k=3$, \cite[Theorem 2.2]{MiHu73} implies that there is a $1$-dimensional summand with self intersection $\pm 1$. This implies $n\leq 8$, cases which are already dealt with in \S \ref{lowdim}. 

In the case $k=2$, as $n\neq 2$, $4$, or $8$ we do not have a Hopf invariant one class, so that the matrix $\big((g_{i,j})\big)$ with respect to some basis is $\begin{pmatrix} 0 & 1 \\ 1 & 0 \end{pmatrix}$. Suppose that $c_1\alpha_1+c_2\alpha_2$ is the element constructed above used to kill off the $\omega_i$. As $c_1$ and $c_2$ don't have common divisors, one of them is not divisible by $3$, say $c_1$. Then, we may choose a second element of the basis as one which is $ \alpha_2 \pmod{3}$ using the fact that $GL_n \Z \to GL_n \F_3$ is surjective. This satisfies $g_{2,2}$ is $0 \pmod{3}$. 
\end{exam}

The general case with more than $1$ cyclic summand is a repeated iteration of the argument in Example \ref{cyclic}.

\begin{prop} \label{basisgr1}
Suppose that $M_k$ is as in Theorem \ref{sphfibgen}. Then, there is a choice of basis that satisfies the hypothesis of Proposition \ref{obstn}.
\end{prop}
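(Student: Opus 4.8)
The plan is to isolate the combinatorial heart of the statement and then run an argument with the Smith normal form together with Lemma~\ref{quadform3}. Fix a basis $\alpha_1,\dots,\alpha_k$ of $H_n(M_k)\cong\Z^k$ and write $L(M_k)$ in the form \eqref{LMgenform}. Since the suspension homomorphism kills every Whitehead product, $\Sigma L(M_k)\in(\pi_{n-1}^s)^{\oplus k}$ has $i$-th coordinate $\Sigma\omega_i$. Keeping track of how the $GL_k(\Z)$-action on the vector $(\Sigma\omega_1,\dots,\Sigma\omega_k)$ interacts with the (unimodular) intersection form $Q$ of $M_k$, one obtains a homomorphism $\Phi\colon H_n(M_k)\to\pi_{n-1}^s$ with the following property: if $v\in H_n(M_k)$ is primitive and we pick a basis $\alpha_1,\dots,\alpha_{k-1}$ of the $Q$-orthogonal complement $v^{\perp}$ together with any $\alpha_k$ completing it to a basis of $H_n(M_k)$, then in this basis $g_{k,k}=Q(v,v)$ and one has $\Sigma\omega_k=0$ if and only if $v\in\Ker(\Phi)$. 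Hence, by Proposition~\ref{obstn} and Proposition~\ref{spseqarg}, it suffices to produce a primitive $v\in\Ker(\Phi)$ with $Q(v,v)\equiv 0\pmod 3$: the classes $\beta_i$ constructed in the proof of Proposition~\ref{obstn} then automatically satisfy the hypotheses of Proposition~\ref{spseqarg}.

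Two remarks set up the reduction. First, since $n>8$ there is no Hopf invariant one element in $\pi_{2n-1}(S^n)$ \cite{Ada60}; consequently $M_k$ admits no subcomplex $S^n\cup e^{2n}$ with nontrivial cup-square, so $Q$ has no unimodular vector. A unimodular symmetric bilinear form over $\Z$ with no unimodular vector has rank at least $8$ — it is either even, of rank divisible by $8$, or odd definite of rank $\geq 23$ (see \cite{MiHu73} and standard lattice theory) — so $k\geq 8$. Second, $\Im(\Phi)$ is a subgroup of $\pi_{n-1}^s$, hence has some number $s\leq r$ of invariant factors; applying the Smith normal form to the inclusion $\Ker(\Phi)\hookrightarrow\Z^k$ produces a basis $f_1,\dots,f_k$ of $\Z^k$ for which $U:=\langle f_1,\dots,f_{k-s}\rangle$ is a direct summand of $\Z^k$ contained in $\Ker(\Phi)$, of rank $k-s\geq k-r\geq 1$.

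If $k-s\geq 3$, then Lemma~\ref{quadform3}(a) applied to the symmetric bilinear form $Q|_U$ supplies a nonzero primitive vector $v\in U$ — primitive in $\Z^k$ too, as $U$ is a direct summand — with $Q(v,v)\equiv 0\pmod 3$; since $v\in U\subseteq\Ker(\Phi)$, this is the required vector, and one concludes exactly as in Example~\ref{cyclic}. Because $k\geq 8$, the only cases not covered are those with $k-s\leq 2$, which force $s\geq k-2$, hence $r\geq k-2\geq 6$ and $k=8$.

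These residual configurations $k=8$, $r\in\{6,7\}$ are the one genuine obstacle. Here $Q$ is a rank-$8$ even unimodular form, hence $\pm E_8$ or the hyperbolic form $H^{\oplus 4}$ by \cite[Ch.~II]{MiHu73}, and the group $\pi_{n-1}^s$ is completely explicit, so only finitely many cases remain. I would settle each of them by hand, exhibiting a suitable primitive $v\in\Ker(\Phi)$ with $Q(v,v)\equiv 0\pmod 3$ directly — exploiting the abundance of primitive isotropic vectors when $Q=H^{\oplus 4}$, and of primitive norm-$6$ vectors inside $E_8$ — in the style of the $k=2,3$ analyses of Examples~\ref{sphk=2} and~\ref{cyclic}. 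I expect all the conceptual content to lie in the first three paragraphs, and the residual cases to present only bookkeeping.
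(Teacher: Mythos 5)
Your first three paragraphs give a genuinely clean and correct reformulation. After suspension every Whitehead product dies, so $\Sigma L(M_k)$ lives in $\pi_{n-1}^s\otimes H_n(M_k)$ and the intersection form converts it into a well-defined homomorphism $\Phi\colon H_n(M_k)\to\pi_{n-1}^s$. Your identity $g_{k,k}=Q(v,v)$ for the $Q$-dual vector $v$ (with $Q(v,\alpha_i)=\delta_{ik}$) is a short linear-algebra check, and the equivalence $\Sigma\omega_k=0\iff v\in\Ker(\Phi)$ is exactly the point. The Smith normal form step and the appeal to Lemma~\ref{quadform3} for $k-s\geq 3$ are both fine. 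This is essentially a coordinate-free repackaging of what the paper does by iterating Example~\ref{cyclic} one cyclic summand at a time, and I find your version more transparent.

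The difficulty is entirely in the last two paragraphs, and it is a genuine gap. The assertion that \emph{a unimodular symmetric bilinear form over $\Z$ with no unimodular vector has rank at least $8$} is false: the hyperbolic plane $H$ is even, unimodular, of rank $2$, and takes no value $\pm1$; more generally $H^{\oplus m}$ gives counterexamples in every even rank $\geq 2$. (The restriction you quote is about \emph{definite} odd lattices; for even lattices there is no such lower bound, and the odd definite bound is $12$, realized by $D_{12}^{+}$, not $23$.) So the inference $k\geq 8$ does not hold, and the residual cases with $k-s\leq 2$ are not pinned down to $k=8$: the constraints $k-s\leq 2$, $s\leq r$, $k>r$ only give $k\in\{r+1,r+2\}$, which, as $n$ (and hence $r$) runs over all even integers $>8$, is an infinite family. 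The plan to ``settle each of them by hand'' is therefore not a proof.

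There is also a structural reason your approach undershoots: you restrict the search for $v$ to the unimodular direct summand $U\subseteq\Ker(\Phi)$, but nothing forces $v$ to lie there. What is needed is a primitive $v\in\Z^k$ with $\Phi(v)=0$ and $Q(v,v)\equiv 0\pmod 3$; $\Ker(\Phi)$ is a full-rank sublattice containing many primitive vectors outside $U$. The paper exploits precisely this extra room: after fixing $\alpha_1,\dots,\alpha_r$ to annihilate the stable obstructions, it allows $\alpha_k$ to pick up contributions from the span of \emph{all} of $\alpha_1,\dots,\alpha_{k-1}$, and then runs an $\F_3$ linear-algebra argument (diagonalization plus a short case analysis using $k\geq 3$) that works uniformly in $k$ and $r$. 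To complete your proof along your own lines, you would need an analogous argument producing a primitive $v\in\Ker(\Phi)$ with $Q(v,v)\equiv 0\pmod 3$ in the regime $\operatorname{rank}(U)\leq 2$, valid for all $k$; the rank bound you invoked cannot be used to reduce to finitely many cases.
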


\begin{proof}
The case $r=1$ is covered in Example \ref{cyclic}. Note that the technique of Example \ref{cyclic} may be applied to a cyclic summand $C$ of $\pi_{n-1}^s$ to yield a basis where the choice of $\omega_i$ in \eqref{LMgenform} satisfies $pr_C(\Sigma \omega_i)=0$ if $i>1$ with $pr_C$ standing for the projection of $\pi_{n-1}^s$ onto the summand $C$. We now apply this fact one summand at a time to obtain a basis with the property that for $i>r$, $\Sigma \omega_i =0$. We now arrange for $g_{k,k}\equiv 0 \pmod{3}$ without changing the first $r$ basis elements. 

Let $\{ \alpha_i \}$ be the basis obtained so far, and let $\langle -, - \rangle$ be the intersection form. Suppose $\langle \alpha_k, \alpha_k\rangle \not\equiv 0 \pmod{3}$. If $k\geq r+3$, then take the summand of $\Z^k$ spanned by $\alpha_{r+1}, \cdots, \alpha_k$  and apply Lemma \ref{quadform3} to get a change a basis so that $\langle \alpha_k, \alpha_k \rangle \equiv 0 \pmod{3}$. In the other cases we change $\alpha_k$ by adding a linear combination of $\alpha_1,\cdots, \alpha_{k-1}$ so that  $\langle \alpha_k, \alpha_k \rangle \equiv 0 \pmod{3}$. We work over $\F_3$ and then lift the change of basis to an integral one. It is possible to choose a basis $\{v_1,\cdots, v_{k-1}\}$  of the summand $B$ generated by $\alpha_1,\cdots, \alpha_{k-1}$ such that $\langle v_i,\alpha_k \rangle \equiv 0 \pmod{3}$ for $i\leq k-2$. If in addition $\langle v_{k-1}, \alpha_k \rangle \equiv 0 \pmod{3}$, then, the restriction of the intersection form to $B$ is non-singular over $\F_3$. So, we may diagonalize it to one with non-zero entries. 
As $r\geq 2$, $k\geq 3$, so there are at least $2$ of the $v_i$, so there is a combination $u=\alpha_k + \sum c_i v_i$ such that $\langle u, u\rangle \equiv 0 \pmod{3}$. As the $v_i$ involve only $\alpha_1, \cdots, \alpha_{k-1}$ so we may replace $\alpha_k$ with $u$ and the proof is complete. 

If $c=\langle v_{k-1},\alpha_k \rangle \not\equiv 0 \pmod{3}$, we note that for $t\not\equiv 0 \pmod{3}$,
\[ \langle \alpha_k + tv_{k-1}, \alpha_k+tv_{k-1} \rangle \equiv \langle \alpha_k,\alpha_k\rangle + \langle v_{k-1}, v_{k-1} \rangle + 2 tc \pmod{3} \]
which may be arranged to be $0$ as long as 
\begin{myeq}\label{condmod3}
\langle \alpha_k,\alpha_k\rangle + \langle v_{k-1}, v_{k-1} \rangle \not\equiv 0 \pmod{3}.
\end{myeq}
In case \eqref{condmod3} does not hold consider $v_j $ for $j<k-1$. There is at least one such $j$ as $k\geq 3$. If $\langle v_j, v_j \rangle \not \equiv 0 \pmod{3}$, then change $\alpha_k$ to $\alpha_k+v_j$ to ensure \eqref{condmod3} holds. If further $\langle v_j,v_j\rangle \equiv 0\pmod{3}$ and $\langle v_j,v_{k-1} \rangle\not\equiv 0 \pmod{3}$, then replace $v_{k-1}$ with $v_{k-1}' = v_{k-1}+sv_j$ so that $\langle v_{k-1}',v_{k-1}'\rangle \equiv 0 \pmod{3}$ and this implies that \eqref{condmod3} holds. Finally, if all the $v_j$ satisfy $\langle v_j,v_{k-1} \rangle \equiv 0 \pmod{3}$, then over $\F_3$, the form breaks up into orthogonal pieces spanned by $\{v_1,\cdots,v_{k-2}\}$ and $\{v_{k-1},\alpha_k\}$. Thus, the restriction of the intersection form to the summand spanned by $v_1,\cdots, v_{k-2}$ is non-singular over $\F_3$. There is a linear combination $v$ of these $v_j$ which satisfies $\langle v,v\rangle \not\equiv 0 \pmod{3}$. Again we may add this to $\alpha_k$ to ensure \eqref{condmod3} holds. This completes the proof. 
\end{proof}

\end{mysubsection}

\begin{mysubsection}{Sphere fibrations as pullbacks}
Now that we have  sphere fibrations $S^{n-1} \to E_k \to M_k$ for large enough $k$ after inverting $2$, we may ask when these are realizable pullbacks. More precisely, we would like to build up an analogous story to \S \ref{lowdim} where the fibrations were principal fibrations. However, we know that the spheres do not usually possess a group structure except for $S^1,S^3$ or $S^{n-1}$ after $p$-completion for $n\mid 2p-2$ as the Sullivan spheres \cite{Sul70, Wil73}. On the other hand, the odd spheres possess a homotopy associative multiplication once $2$ and $3$ are inverted \cite{Ada61, Zab70}. However, inverting $3$ is necessary here otherwise we would only have a non-homotopy associative $H$-space structure \cite{Jam57}. We briefly recall the construction using the following \cite[Corollary 3.2]{Yam89} 
\begin{prop} \label{equivsphinfloop}
After inverting $2$ and $3$, the inclusion $S^{n-1} \to Q(S^{n-1})$ is a $(5n +1)$-equivalence. 
\end{prop}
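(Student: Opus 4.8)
On homotopy groups the map $S^{n-1}\to Q(S^{n-1})$ is the stabilization homomorphism $\pi_i(S^{n-1})\to\pi_{i-n+1}^s$, so it is enough to prove that, after inverting $2$ and $3$, the homotopy fibre $F$ of $S^{n-1}\to Q(S^{n-1})$ is $5n$-connected; then the map is automatically a $(5n+1)$-equivalence. The idea is to factor the stabilization map through the telescope of double suspensions
\[
S^{n-1}\xrightarrow{\ E^2\ }\Omega^2 S^{n+1}\xrightarrow{\ \Omega^2 E^2\ }\Omega^4 S^{n+3}\longrightarrow\cdots,
\]
whose colimit is $\Omega^\infty\Sigma^\infty S^{n-1}=Q(S^{n-1})$. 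Since $n$ is even, $n-1$ is odd and every sphere occurring in this telescope is odd-dimensional, which is exactly the situation in which the odd-primary $\mathrm{EHP}$/double-suspension machinery applies cleanly.

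The key input is the odd-primary double suspension theorem (James and Toda, sharpened by Cohen--Moore--Neisendorfer): for an odd prime $p$, the homotopy fibre of $E^2\colon S^{2l-1}\to\Omega^2 S^{2l+1}$ is, $p$-locally, highly connected with connectivity growing linearly in $lp$. After inverting $2$ and $3$ only primes $p\ge 5$ contribute, and the smallest of these, $p=5$, gives the worst (smallest) estimate; applied to the first map $E^2\colon S^{n-1}\to\Omega^2 S^{n+1}$ (where $2l-1=n-1$) it already produces an equivalence of range $\approx 5n$. Each later map $\Omega^{2t}E^2$ involves a sphere of larger dimension, so even after the loss of $2t$ in fibre connectivity coming from the $2t$-fold looping, its range of equivalence is strictly larger than that of the first map. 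Assembling the fibre sequences along the telescope therefore shows that $S^{n-1}\to Q(S^{n-1})$ is a $(5n+1)$-equivalence; the precise value of the constant is the content of \cite[Corollary 3.2]{Yam89}, which we cite.

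The main technical obstacle is the bookkeeping of connectivity constants: tracking precisely the dimension of the bottom cell of the fibre of $E^2$ at $p=5$ as a function of $n$, accounting for the connectivity lost under looping, and verifying that these combine to the stated bound $5n+1$ (together with the small exceptional ranges). A subsidiary point that should be checked is that the odd-primary double-suspension fibrations are natural under looping, so that the telescope argument is legitimate; this is standard. We do not reproduce these estimates here, as they are exactly what \cite{Yam89} establishes.
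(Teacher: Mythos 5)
The paper gives no proof of this proposition at all --- it is stated as a direct citation of \cite[Corollary 3.2]{Yam89} --- and your proposal ultimately does the same, deferring the precise bound $5n+1$ to Yamaguchi. Your added sketch (telescope of double suspensions, odd-primary double-suspension theorem, worst case at $p=5$) is a reasonable account of what lies behind Yamaguchi's estimate, so the approach is essentially the paper's citation with helpful context filled in.
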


A direct corollary of Proposition \ref{equivsphinfloop} is that the $E_\infty$-space structure on $Q(S^{n-1})$ yields a homotopy associative structure on $S^{n-1}$. 

\begin{cor} \label{assmult}
After inverting the primes $2$ and $3$, $S^{n-1}$ has a homotopy associative multiplication. 
\end{cor}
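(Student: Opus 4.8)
The plan is to transport the infinite-loop-space multiplication on $Q(S^{n-1})=\Omega^\infty\Sigma^\infty S^{n-1}$ to $S^{n-1}$ across the inclusion $j\colon S^{n-1}\to Q(S^{n-1})$, exploiting that, by Proposition \ref{equivsphinfloop}, $j$ becomes a $(5n+1)$-equivalence after inverting $2$ and $3$. Throughout I would work in $\Top_{1/2,1/3}$. Since $Q(S^{n-1})$ is an infinite loop space it carries a homotopy associative (indeed $E_\infty$) multiplication $\mu\colon Q(S^{n-1})\times Q(S^{n-1})\to Q(S^{n-1})$ with the basepoint as strict unit, and this structure persists after localization.

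The first step is the obstruction-theoretic observation that, because $j$ is a $(5n+1)$-equivalence, the map $j_*\colon [X,S^{n-1}]_*\to[X,Q(S^{n-1})]_*$ is a bijection for every CW complex $X$ with $\dim X\le 5n$. I would apply this to $X=S^{n-1}$, $X=S^{n-1}\times S^{n-1}$ and $X=S^{n-1}\times S^{n-1}\times S^{n-1}$, whose dimensions $n-1$, $2n-2$, $3n-3$ lie well below $5n+1$. Using the case $X=S^{n-1}\times S^{n-1}$ I would define $m\colon S^{n-1}\times S^{n-1}\to S^{n-1}$ to be the unique pointed homotopy class with $j\circ m\simeq \mu\circ(j\times j)$. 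Restricting to the two axes and using that the basepoint is a unit for $\mu$ shows $j\circ(m|_{S^{n-1}\times *})\simeq j$ and $j\circ(m|_{*\times S^{n-1}})\simeq j$; injectivity of $j_*$ on $[S^{n-1},-]_*$ then upgrades these to $m|_{S^{n-1}\times *}\simeq\mathrm{id}\simeq m|_{*\times S^{n-1}}$, so $(S^{n-1},m)$ is an $H$-space.

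For homotopy associativity I would compare the two triple products after composing with $j$. From $j\circ m\simeq\mu\circ(j\times j)$ one obtains
\[
j\circ m\circ(m\times\mathrm{id})\simeq \mu\circ(\mu\times\mathrm{id})\circ(j\times j\times j),\qquad
j\circ m\circ(\mathrm{id}\times m)\simeq \mu\circ(\mathrm{id}\times\mu)\circ(j\times j\times j),
\]
and homotopy associativity of $\mu$ makes the two right-hand sides agree. Applying bijectivity of $j_*$ on $[S^{n-1}\times S^{n-1}\times S^{n-1},-]_*$ then yields $m\circ(m\times\mathrm{id})\simeq m\circ(\mathrm{id}\times m)$, which is the assertion. (The same comparison with the swap map shows $m$ is in fact homotopy commutative, though this is not needed here.)

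I do not expect a genuine obstacle: all the content is packaged in Proposition \ref{equivsphinfloop}, and the remaining steps are the standard ``compression in a metastable range'' argument, valid because every complex in sight has dimension far below the equivalence range $5n+1$. The only point requiring mild care is the bookkeeping of basepoints, i.e. working consistently with pointed homotopy classes so that the unit conditions are automatic.
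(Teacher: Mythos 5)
Your proposal is correct and follows essentially the same route as the paper: both arguments use Proposition \ref{equivsphinfloop} to obtain the isomorphism $[X,S^{n-1}]\cong[X,QS^{n-1}]$ for $X = S^{n-1}\times S^{n-1}$ (respectively the triple product) and then lift the $E_\infty$-multiplication on $QS^{n-1}$ to define $m$ and check associativity. The only difference is that you also spell out the unit verification, which the paper leaves implicit.
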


\begin{proof}
Since $QS^{n-1}$ has a homotopy associative multiplication : $m : QS^{n-1}\times QS^{n-1}\to QS^{n-1}$. We define the multiplication on $S^{n-1}$ by lifting
\[\phi:  S^{n-1}\times S^{n-1}\stackrel{\iota \times \iota}{\to} QS^{n-1}\times QS^{n-1} \stackrel{m}{\to} QS^{n-1} \]
via the isomorphism $[S^{n-1} \times S^{n-1}, S^{n-1}] \cong [S^{n-1} \times S^{n-1}, QS^{n-1}]$ implied by Proposition \ref{equivsphinfloop}. Associativitiy is implied by the associativity of $m$ and the isomorphism 
\[ [S^{n-1} \times S^{n-1}\times S^{n-1}, S^{n-1}] \cong [S^{n-1} \times S^{n-1}\times S^{n-1}, QS^{n-1}].\]
\end{proof}

Following Stasheff \cite{Sta63}, we consider the projective planes $P_2(X)$ for an $A_3$-space $X$ which support a fibration $E_2(X) \to P_2(X)$ with an action $X\times E_2(X) \to E_2(X)$ identifying $X$ as the fibre. In fact we have the diagram 
\[ \xymatrix{ E_1(X) \ar[d]^{H(m_X)} \ar[r]   & E_2(X) \ar[d]\\ 
P_1(X) \ar[r] & P_2(X) } \] 
with $H(m_X)$ standing for the Hopf construction on the multiplication $m_X$ in the sequence of identifications 
\[ E_1(X) \simeq X \ast X \stackrel{H(m_X)}{\to} \Sigma X \simeq P_1(X),~ P_2(X) \simeq C(H(m_X)),~ E_2(X) \simeq X\ast X\ast X.\] 
We now fix a model for $QS^{n-1}$ as a $A_\infty$-space via the identification $Q(S^{n-1}) \simeq \Omega Q(S^n)$. Therefore, Corollary \ref{assmult} implies the following commutative diagram by the functoriality of the $P_2$-construction. 
\begin{myeq} \label{Hmapproj}
\xymatrix{ E_2(S^{n-1}) \ar[r] \ar[d] & E_2(Q(S^{n-1})) \ar[r] \ar[d] & E\Omega QS^n \simeq \ast \ar[d] \\ 
P_2(S^{n-1}) \ar[r] & P_2(Q(S^{n-1})) \ar[r] & B\Omega QS^n \simeq QS^n } 
\end{myeq}
We also note that the construction in Corollary \ref{assmult} makes the multiplication $m$ on $S^{n-1}$ homotopy commutative. This implies using \cite[Lemma 2.4]{Jam57I} that
\begin{myeq}\label{whpmultcomm}
2H(m) = - [\iota_n,\iota_n].
\end{myeq}

We now prove that the sphere fibration over $M_k$ is obtained as a pullback of $E_2(S^{n-1}) \to P_2(S^{n-1})$. 
\begin{thm}\label{sphfibpbk}
With notations as in Theorem \ref{sphfibgen}, after inverting $2$ and $3$, the fibration $E_k\to M_k$ is a homotopy pullback of $M_k \to P_2(S^{n-1})\leftarrow E_2(S^{n-1})$ for a suitable map $M_k\to P_2(S^{n-1})$.  
\end{thm}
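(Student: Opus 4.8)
The goal is to produce a map $M_k \to P_2(S^{n-1})$ so that the sphere fibration $S^{n-1} \to E_k \to M_k$ constructed in Theorem \ref{sphfibgen} becomes the pullback of $E_2(S^{n-1}) \to P_2(S^{n-1})$. First I would recall that in the category of spaces with $2$ and $3$ inverted, $S^{n-1}$ has a homotopy associative (indeed homotopy commutative) multiplication by Corollary \ref{assmult}, so the Stasheff projective plane $P_2(S^{n-1})$ and the fibration $S^{n-1} \to E_2(S^{n-1}) \to P_2(S^{n-1})$ exist, with $P_2(S^{n-1}) \simeq \Sigma S^{n-1} \cup_{H(m)} \DD^{3n-1} = S^n \cup_{H(m)} \DD^{3n-1}$ where $H(m): S^{2n-1} \to S^n$ is the Hopf construction on the multiplication. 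By \eqref{whpmultcomm}, $2H(m) = -[\iota_n,\iota_n]$, so after inverting $2$, $H(m) = -\tfrac12[\iota_n,\iota_n]$ is precisely the generator $\omega$-type class used in \eqref{gen2n-1}; this is the key compatibility that makes the construction work.

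**Constructing the map.** The plan is to build $g: M_k \to P_2(S^{n-1})$ cell by cell using the pushout \eqref{poM}, namely $M_k = (S^n)^{\vee k} \cup_{L(M_k)} \DD^{2n}$. Here I must use the choice of basis $\{\alpha_i\}$ guaranteed by Proposition \ref{basisgr1} (or Example \ref{cyclic}) so that $L(M_k)$ takes the form \eqref{LMgenform} with $g_{k,k} \equiv 0 \pmod 3$ and $\omega_k$ in the kernel of the stabilization $\Sigma$. On the wedge $(S^n)^{\vee k}$, define $g$ by sending $\alpha_i$ for $i \leq k-1$ to the null map, and $\alpha_k$ to the bottom-cell inclusion $S^n \hookrightarrow P_2(S^{n-1})$ (which represents $H(m)$ up to the attaching data). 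To extend over the top cell $\DD^{2n}$, I need $g_\ast(L(M_k)) = 0$ in $\pi_{2n-1}(P_2(S^{n-1}))$. Since all $\alpha_i$ with $i<k$ go to zero, only the terms of $L(M_k)$ involving $\alpha_k$ survive: $g_{k,k}\tfrac12[\iota_n,\iota_n] + \omega_k$ pushed into $P_2(S^{n-1})$ along the bottom cell. Now $\tfrac12[\iota_n,\iota_n]$ maps to $-H(m)$, which is null in $P_2(S^{n-1})$ by definition of the mapping cone; and $\omega_k$ becomes null because $\omega_k$ stabilizes to zero while on $P_2(S^{n-1})$ the relevant homotopy of the $n$-cell is in the stable range after the cone attachment. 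A short argument with \eqref{wh2n-1} (in particular $3[\iota_n,[\iota_n,\iota_n]]=0$, which is why $3$ must be inverted or $g_{k,k}$ divisible by $3$) shows the iterated Whitehead-product contributions vanish. This produces $g: M_k \to P_2(S^{n-1})$.

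**Identifying the pullback.** Let $E'_k$ be the homotopy pullback of $M_k \xrightarrow{g} P_2(S^{n-1}) \leftarrow E_2(S^{n-1})$. Then $E'_k \to M_k$ is a fibration with fibre $S^{n-1}$. I would compare $E'_k$ with $E_k$ by noting that both admit compatible maps: the classifying data for $E_k \to M_k$ lands in $B F(n)$ (the classifying space for $S^{n-1}$-fibrations, where $F(n)$ is the $H$-space of self-equivalences of $S^{n-1}$), and there is a canonical map $P_2(S^{n-1}) \to BF(n)$ refining the fibration. The essential point is that the obstruction-theoretic construction in Theorem \ref{sphfibgen}/Proposition \ref{obstn} only used the $\beta_i = \sum g_{i,j}[\alpha_j,\alpha_k] + \tfrac12 g_{i,k}[\alpha_k,\alpha_k] - \alpha_k\circ\omega_i$, all of which factor through the single cell $\alpha_k$ together with the multiplication on $S^{n-1}$ — which is exactly the data encoded by $E_2(S^{n-1}) \to P_2(S^{n-1})$. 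Concretely, $E_2(S^{n-1}) \simeq S^{n-1}\ast S^{n-1}\ast S^{n-1}$ has cells in dimensions $n-1, 2n-1, 3n-1$ with attaching maps built from $m$ and $H(m)$; mapping $E_k \simeq \#^{k-1}(S^n\times S^{2n-1})$ into it compatibly over $M_k$ and checking it is a fibrewise equivalence reduces to a homology comparison, which follows from the spectral sequence identification of $\Fib$ already established in Proposition \ref{spseqarg} together with the fact that a fibrewise map of $S^{n-1}$-fibrations inducing an iso on base and fibre is an equivalence.

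**Main obstacle.** The delicate step is verifying that $g_\ast(L(M_k)) = 0$, i.e. that the term $\alpha_k \circ \omega_k$ genuinely dies in $\pi_{2n-1}(P_2(S^{n-1}))$. The hypotheses of Proposition \ref{obstn} ($\omega_k \in \ker\Sigma$, $g_{k,k}\equiv 0\pmod 3$) were tailored exactly for the obstruction computation $\sum[\alpha_i,\beta_i]=0$ in $M_k$, and one must check these same hypotheses suffice in the target $P_2(S^{n-1})$; this amounts to understanding $\pi_{2n-1}$ of the two-cell complex $S^n\cup_{H(m)}\DD^{3n-1}$ relative to $\pi_{2n-1}(S^n)$, which is controlled by the James/EHP exact sequence and the fact that $H(m)$ is (up to sign and the factor $2$) the Whitehead square. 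I expect this and the fibrewise-equivalence verification $E_k \simeq E'_k$ to be where the real work lies; the rest is formal manipulation of the Stasheff construction and its functoriality as in diagram \eqref{Hmapproj}.
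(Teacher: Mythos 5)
Your construction of the map $M_k\to P_2(S^{n-1})$---quotient $(S^n)^{\vee k}$ to the last factor $\alpha_k$ and include the bottom cell of $P_2(S^{n-1})$---is the same as the paper's map $\phi_k$ built from $s_k$. The difficulty is in how you dispose of the $\alpha_k\circ\omega_k$ term in $s_k\circ L(M_k)$, and here there is a genuine gap. You argue that $\omega_k$ "becomes null" in $\pi_{2n-1}(P_2(S^{n-1}))$ by some stable-range reasoning; but the kernel of $\pi_{2n-1}(S^n)\to\pi_{2n-1}(P_2(S^{n-1}))$ (with $P_2(S^{n-1})=S^n\cup_{H(m)}\DD^{2n}$) is generated by $H(m)=-\tfrac12[\iota_n,\iota_n]$, which sits in the \emph{free} summand of \eqref{gen2n-1}, whereas $\omega_k$ lies in the complementary torsion summand $E(\pi_{2n-2}(S^{n-1}))$. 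So $\omega_k$ dies in $P_2(S^{n-1})$ only if $\omega_k$ is already zero. The observation you are missing---and it is precisely what makes Theorem~\ref{sphfibpbk} work after inverting $2$ and $3$---is that Proposition~\ref{equivsphinfloop} makes $\pi_{2n-2}(S^{n-1})\to\pi_{n-1}^s$ an isomorphism in the relevant range, so the hypothesis $\Sigma\omega_k=0$ inherited from Theorem~\ref{sphfibgen} forces $\omega_k=0$ outright, and then $s_k\circ L(M_k)$ is literally a multiple of $H(m)$. Your invocation of $3[\iota_n,[\iota_n,\iota_n]]=0$ is also misplaced: that identity is about $\pi_{3n-2}$ and plays no role in the $\pi_{2n-1}$ obstruction to extending $\phi_k$ over the top cell; the reason $3$ is inverted in this theorem is the $A_3$-structure of Corollary~\ref{assmult} together with Proposition~\ref{equivsphinfloop}, not that identity.

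For the identification $E_k\simeq\phi_k^*(E_2(S^{n-1}))$, you sketch a comparison of classifying maps through $BF(n)$; that is a reasonable route in principle, but you do not verify that the composite $M_k\to P_2(S^{n-1})\to BF(n)$ coincides with the classifying map of $E_k\to M_k$, and that is exactly where the work lies. The paper instead proceeds concretely: it lifts the already-constructed map $E_k\to M_k$ cell by cell to the pullback $E$, via the diagram \eqref{pullbackdiag} passing through $W$ (the pullback of $PQS^n\to QS^n\leftarrow P_2(S^{n-1})$) and using Proposition~\ref{equivsphinfloop} to lift through the $(5n+1)$-equivalence $E_2(S^{n-1})\to W$; the top-cell obstruction is killed because $S^{n-1}\to E$ is null, which follows from $\Omega M_k\to S^{n-1}$ being surjective on $\pi_{n-1}$. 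Once the lift $E_k\to E$ exists, it is a homology equivalence. You should either carry out the $BF(n)$ comparison rigorously or adopt this explicit lifting argument.
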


\begin{proof}
We define the map $s_k: (S^n)^{\vee k} \to S^n$ which quotients out the first $k-1$ factors, and then compose it to $P_2(S^{n-1})$. Now consider the diagram 
\[\xymatrix{ S^{2n-1} \ar[d]^{L(M_k)} \ar[r] & \DD^{2n} \ar[d] \\ 
(S^n)^{\vee k} \ar[r] \ar[rrd]_{s_k} & M_k \ar@{-->}[rd] \\ 
& & P_2(S^{n-1}) } \]
Assume that $L(M_k)$ satisfies the form given in \eqref{LMgenform}. In this form, applying Proposition \ref{equivsphinfloop}, we see that $\Sigma \omega_k=0$ implies that $\omega_k=0$. We now observe via \eqref{whpmultcomm} that
\[ s_k \circ L(M_k)= g_{k,k} H(m). \]
As $P_2(S^{n-1})$ is the mapping cone of $H(m)$, the dashed arrow exists in the above diagram, and hence we  obtain a map $\phi_k: M_k\to P_2(S^{n-1})$. Let $E=\phi_k^\ast(E_2(S^{n-1}))$ be the homotopy pullback $M_k\to P_2(S^{n-1}) \leftarrow E_2(S^{n-1})$. We shall show that $E\simeq E_k$ by lifting the map $E_k \to M_k$ to $E$.  Once we are able to do this, it follows that the map $E_k\to E$ is a homology equivalence proving the required result.

Consider the following diagram 
\begin{myeq} \label{pullbackdiag}
\xymatrix{S^{3n-2} \ar[d]^\chi & S^{n-1} \ar[r]^{=} \ar[d] & S^{n-1} \ar[r] \ar[d] & QS^{n-1} \ar[r] \ar[d] & QS^{n-1} \ar[d] \\
X \ar[d] & E \ar[r] \ar[d] & E_2(S^{n-1}) \ar[r] \ar[d] & W \ar[r] \ar[d] & P QS^{n} \ar[d] \\
E_k \ar[r] & M_k \ar[r] & P_2(S^{n-1}) \ar[r]^{=} & P_2(S^{n-1}) \ar[r] & Q S^{n}. }
\end{myeq}
In \eqref{pullbackdiag}, $W$ is defined so that the right-most square is a homotopy pullback. Note that Proposition \ref{equivsphinfloop} implies that $E_2(S^{n-1})\to W$ is a $(5n+1)$-equivalence. Now the $(2n-1)$-skeleton $X$ of $E_k$ is $(S^n)^{\vee k-1} \vee (S^{2n-1})^{\vee k-1}$, and from the formula of $\alpha_i$ and $\beta_i$ we check that the composite $X \to M_k \to P_2(S^{n-1})$ is null-homotopic. Therefore, the map of $X$ all the way to $QS^{n}$ lifts to $PQS^{n}$. Hence, it lifts to $W$ being the pullback, and to $E_2(S^{n-1})$ from the connectivity of the map $E_2(S^{n-1})\to W$. Again as $E$ is the pullback, we get a lift of $X \to E$. Let $\chi$ stand for the attaching map of the $(3n-1)$-cell of $E_k$. The composite 
\[S^{3n-2} \stackrel{\chi}{\to} X \to E \to   M_k \]
is trivial, so the composite to $E$ lifts to $S^{n-1}$. This implies that the composite to $E$ is trivial as the inclusion $S^{n-1} \to E$ is null-homotopic. The last statement comes from the fact that the map $\Omega M_k \to S^{n-1}$ is surjective on $\pi_{n-1}$. Therefore, there is a lift $E_k \to E$. 
\end{proof}

 \end{mysubsection}

\section{Applications for loop space decompositions} \label{appln}

In this section, we use the spherical fibrations of \S \ref{highbetti} to deduce new results for loop space decompositions. The spherical fibrations are complemented with the results of \cite{HuTh22_unpub} which identify the pullback of a spherical fibration over a connected sum. The fibration splits over the loop space to produce loop space decompositions. 

\begin{mysubsection}{Connected sum with highly connected manifolds}  
Given $2n$-dimensional Poincar\'{e} duality complexes $M$ and $N$ in which the attaching maps of the $2n$-cells are given by (respectively for $M$ and $N$) 
\[ f : S^{2n-1} \to M_0,~~ g: S^{2n-1} \to N_0,\]
the connected sum is defined as the mapping cone of \cite{Wal67}
\[f+g : S^{2n-1} \to M_0 \vee N_0.\]
Theriault \cite{Th22_unpub} has provided a method to transfer a loop space decomposition of $M$ to one of $N\# M$ for arbitrary $N$. The hypothesis on $M$ used to make the argument work is that the cell attachment is inert. 
\begin{defn}\label{inert}
For a homotopy cofibration sequence 
\[ \Sigma A \stackrel{f}{\to} X_0 \stackrel{i}{\to} X, \]
$f$ is said to be {\it inert} if $\Omega i$ has a homotopy right inverse.
\end{defn}

Let $M_k \in \PP \DD_{n-1}^{2n}$. For $M_k$, we have two different approaches for loop space decompositions \cite{BeTh14} and \cite{BaBa18}. In this case $M_0 \simeq (S^n)^{\vee k}$ and the attaching map is $L(M_k) : S^{2n-1} \to M_0$. As a consequence of the loop space decompositions of $M_k$, we readily observe 
\begin{prop}\label{inerthigh}
For $k\geq 2$, the map $L(M_k)$ is inert. 
\end{prop}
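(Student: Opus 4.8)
The plan is to extract the required homotopy section directly from the loop space decomposition of $M_k$. Write $M_0 = (S^n)^{\vee k}$ for the bottom cells, so that $M_k = M_0 \cup_{L(M_k)} \DD^{2n}$ and, by Definition \ref{inert}, proving $L(M_k)$ inert amounts to producing a right homotopy inverse of $\Omega i$, where $i \colon M_0 \hookrightarrow M_k$ is the inclusion.

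First I would recall from \cite{BeTh14, BaBa18} that, for $k \geq 2$, there is a homotopy equivalence $\prod_j \Omega S^{m_j} \xrightarrow{\ \simeq\ } \Omega M_k$ (a weak infinite product) realising the algebra identification $H_\ast(\Omega M_k) \cong T(a_1,\dots,a_k)/\bigl(l(M_k)\bigr)$ as the universal enveloping algebra of a \emph{free} graded Lie algebra on generators $z_j$ with $|z_j| = m_j-1$; the freeness is the inertness of the single quadratic relation $l(M_k) = -\sum g_{i,j}\, a_i \otimes a_j$, which is read off from the Poincar\'e series $(1 - k t^{n-1} + t^{2n-2})^{-1}$ of \cite{BaBa18}, and this is exactly the place where $k \geq 2$ is used — for $k = 1$ the algebra $T(a)/(a^2)$ has the wrong (finite) Poincar\'e series and the relation is not inert. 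The key point I would then check is that each generator $z_j$ is represented by an iterated Whitehead product of the classes $\alpha_1,\dots,\alpha_k \in \pi_n(M_k)$: the degree $n-1$ generators are a change of basis of the $a_i = \rho(\alpha_i)$, and the higher ones are Lie brackets of these, which correspond under $\rho$ to Whitehead products via $\rho([\gamma_1,\gamma_2]) = (-1)^{|\gamma_1|-1}[\rho(\gamma_1),\rho(\gamma_2)]$. Since every $\alpha_i$ factors as $S^n \hookrightarrow M_0 \xrightarrow{i} M_k$ and Whitehead products are natural, all of these generators are represented by maps $S^{m_j} \to M_k$ that factor through $M_0$.

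With this in hand the proof finishes quickly: choosing such factorisations $S^{m_j} \to M_0$ and taking the multiplicative extension $\mu \colon \prod_j \Omega S^{m_j} \to \Omega M_0$ of their loops, the weak-product equivalence becomes (up to homotopy) the composite $\Omega i \circ \mu$; hence $\mu$ composed with a homotopy inverse of $\Omega i \circ \mu$ is a right homotopy inverse of $\Omega i$, so $L(M_k)$ is inert. The step that requires genuine care — which I would cite from \cite{BeTh14, BaBa18} rather than reprove — is the assertion that the $z_j$ can be taken to be a free generating set of iterated Whitehead products simultaneously; this is precisely the free-Lie-algebra identification underlying the loop space decomposition, and it is close in spirit to Proposition \ref{alglem2}. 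A minor bookkeeping point, also handled there, is that the Hopf-invariant-zero summands $\psi_i$ in $L(M_k)$ of \eqref{LMform} lie in $\ker \rho$ and the classes $\phi_i$ contribute only to the symmetric tensor $l(M_k)$, so neither disturbs the identification of $H_\ast(\Omega M_k)$.
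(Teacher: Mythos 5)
Your proof takes the same route as the paper: both reduce inertness to the loop space decomposition of $\Omega M_k$ from \cite{BeTh14, BaBa18} into a weak product of loop spaces of spheres, each mapping to $M_k$ via an iterated Whitehead product of the $\alpha_i$ and hence factoring through $M_0 = (S^n)^{\vee k}$; the paper states this in two sentences, while you spell out how the section of $\Omega i$ is extracted from the multiplicative extension $\mu\colon \prod_j \Omega S^{m_j} \to \Omega M_0$ and a homotopy inverse of $\Omega i \circ \mu$.

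One claim you make as justification is incorrect, however: $H_\ast(\Omega M_k) \cong T(a_1,\ldots,a_k)/(l(M_k))$ is \emph{not} the universal enveloping algebra of a free graded Lie algebra. If it were, its Poincar\'{e} series would have the form $\bigl(1 - p(t)\bigr)^{-1}$ with $p$ having nonnegative coefficients, but in fact it equals $\bigl(1 - kt^{n-1} + t^{2n-2}\bigr)^{-1}$, whose denominator has a term of the wrong sign. The Pontrjagin ring is $U\bigl(L(V,l(M_k))\bigr)$ where $L(V,l(M_k)) = F(V)/(l(M_k))$ is a genuine quotient, not a free Lie algebra; inertness of the relation (in the Anick sense) controls the Poincar\'{e} series but does not make the quotient free. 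The underlying confusion is that the weak-product decomposition $\Omega M_k \simeq \prod_j \Omega S^{m_j}$ is an equivalence of \emph{spaces}, not of $H$-spaces, so it does not identify the Pontrjagin ring with a tensor algebra (or even with $\bigotimes_j H_\ast(\Omega S^{m_j})$ as a ring). This misstatement does not break your argument, since you anyway cite the geometric decomposition and the representation of its sphere factors by Whitehead products from \cite{BeTh14, BaBa18} --- which is correct and is exactly what the paper's own proof cites --- but the word ``free'' in your justification should be dropped.
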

This follows because $\Omega M_k$ is a product of loop spaces of spheres which are mapped via Whitehead products of the sphere inclusions into $M_0$ \cite{BaBa18}. These clearly lift to $\Omega M_0$. Now \cite[Theorem 1.4]{Th22_unpub} implies 
 \begin{theorem}\label{loopdecMconN}
 Let $M\in \PP \DD_{n-1}^{2n}$ with $\Rank(H_n(M))\geq 2$,   and $N\in \PP\DD_1^{2n}$. Then, 
 \[\Omega (M \# N) \simeq \Omega M \times \Omega (\Omega M \ltimes N_0),\]
  where $N_0\simeq N-\ast.$ 
 \end{theorem}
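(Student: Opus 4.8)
The plan is to invoke \cite[Theorem 1.4]{Th22_unpub}, whose only hypothesis is that the attaching map of the top cell of one of the two summands be inert. First I would record the relevant cofibrations. Since $M\in\PP\DD_{n-1}^{2n}$ with $\Rank(H_n(M))=k\geq 2$, a minimal cell structure identifies the $(2n-1)$-skeleton $M_0$ with $(S^n)^{\vee k}$ and presents $M$ as the mapping cone of $L(M_k)\colon S^{2n-1}\to M_0$, giving a homotopy cofibration $S^{2n-1}\xrightarrow{L(M_k)}M_0\xrightarrow{i}M$. Similarly, writing $N_0\simeq N-\ast$ for the $(2n-1)$-skeleton of $N$, the complex $N$ is the mapping cone of some $g\colon S^{2n-1}\to N_0$, and by definition of the connected sum \cite{Wal67} the space $M\#N$ is the mapping cone of $L(M_k)+g\colon S^{2n-1}\to M_0\vee N_0$; here $N_0$ is a finite simply connected complex because $N\in\PP\DD_1^{2n}$.

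Second, I would verify the input condition. Proposition \ref{inerthigh} asserts exactly that $L(M_k)$ is inert in the sense of Definition \ref{inert}, i.e.\ $\Omega i$ admits a right homotopy inverse; this is where the hypothesis $k\geq 2$ enters, since for $k=1$ the relevant loop space decomposition fails (\cite[\S 4.3]{BaBa18}) and inertness is not available. Granting this, and with $N_0$ simply connected, \cite[Theorem 1.4]{Th22_unpub} applies and delivers the asserted equivalence $\Omega(M\#N)\simeq\Omega M\times\Omega(\Omega M\ltimes N_0)$.

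For completeness, the mechanism behind \cite[Theorem 1.4]{Th22_unpub} --- which I would fall back on should its hypotheses need slight repackaging --- runs as follows. Collapsing $N_0$ defines a map $q\colon M\#N\to M$ restricting to $i$ on $M_0$; composing a right homotopy inverse of $\Omega i$ with $\Omega M_0\to\Omega(M\#N)$ produces a right homotopy inverse of $\Omega q$, so that $\Omega(M\#N)\simeq\Omega M\times\Omega F$ with $F=\Fib(q)$. Since the composite $N_0\hookrightarrow M\#N\xrightarrow{q}M$ is null-homotopic, $N_0$ lifts to $F$, and pairing this lift with the fibrewise $\Omega M$-action on $F$ (whose orbit map $\Omega M\to F$ is the connecting map, hence null by the existence of the section) gives a map $\Omega M\ltimes N_0\to F$; the technical heart is to check this map is a homology isomorphism, after which simple-connectivity finishes. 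The main obstacle is therefore not a genuinely new difficulty: the substantive input is Proposition \ref{inerthigh} (itself resting on the loop space decomposition of $M_k$ from \cite{BaBa18}), and beyond that one only has to match the present setup to the framework of \cite{Th22_unpub} and keep track of the identification $F\simeq\Omega M\ltimes N_0$.
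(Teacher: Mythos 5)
Your proposal is correct and follows the paper's own argument exactly: invoke Proposition~\ref{inerthigh} to see that $L(M_k)$ is inert (the point where $k\geq 2$ is used), and then apply \cite[Theorem 1.4]{Th22_unpub} to the cofibration presentation of $M\# N$ as the mapping cone of $L(M_k)+g\colon S^{2n-1}\to M_0\vee N_0$. Your additional sketch of the mechanism behind \cite[Theorem 1.4]{Th22_unpub} (right inverse of $\Omega q$, identification of $\Fib(q)$ with $\Omega M\ltimes N_0$) is accurate but not part of the paper's proof, which treats that citation as a black box.
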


Now by \cite[Theorem 1.4]{BeTh14}, we have 
\[\Omega M \simeq \Omega (S^{n}\times S^{n})\times \Omega (J\vee (J \wedge \Omega (S^{n}\times S^{n}))) \] 
where $J= \vee_{2}^{k}S^{n} $ (where $H_n(M)\cong \Z^k$). Hence we have a decomposition of the loop space $\Omega (M \# N)$ in terms of simply connected spheres if $N_0$ is also a wedge of spheres.

\end{mysubsection}

\begin{mysubsection}{Spherical fibrations over connected sums with highly connected manifolds}
We now construct spherical fibrations over connected sums using the spherical fibrations $S^{n-1} \to E_k \to M_k$ proved in Theorems \ref{sphfibloc}, \ref{sc4mfld}, \ref{sphfib4}, and \ref{sphfibgen}. Let $M_k\in \PP\DD_{n-1}^{2n}$  with $\Rank(H_n(M_k))=k\geq 2$. For a $2n$-dimensional Poincar\'{e} duality complex $N$, we consider the quotient $M_k\# N \stackrel{q}{\to} M_k$,
and the pullback $E_{k,N}$ of  $E_k$ to $M_k\# N$. 
\begin{myeq}\label{pbsphfib}
\xymatrix@C4.5pc{
                           E_{k,N}\ar[r] \ar[d] & E_k \ar[d]                      \\ 
M_k\# N \ar[r]^{q} &  M_k                             ,
}
\end{myeq}
We additionally observe that the loop space of $E_{k,N}$ depends only on $N_0$ and not on the attaching map of the top cell of $N$. 

The homotopy type of $E_{k,N}$ is determined analogously as in \cite[Lemma 3.1]{HuTh22_unpub}. Let $F(n)$ denote the set of homotopy equivalences of $S^{n-1}$. For a $\tau  : S^{n-1} \to F(2n)$ define $\GG_\tau (N)$ as the pushout 
\begin{myeq} \label{Gtaudefn}
\xymatrix@C4.5pc{
S^{2n-1}\times S^{n-1}\ar[r]^{i}\ar[d]^{(\phi , \pi_{2})} & S^{2n-1}\times \DD^n \ar[d]  \\ 
N_{0}\times S^{n-1} \ar[r]^{} &  \GG_{\tau}(N),                             
}
\end{myeq}
 where $\phi$ is the composite
 \[S^{2n-1}\times S^{n-1}\stackrel{t}{\to}  S^{2n-1}\times S^{n-1}\stackrel{\pi_{1}}{\to} S^{2n-1} \to N_{0}, \mbox{  with }t(x,s)=  (\tau(s)x, s).\]
 Using this notation, the pullback $E_{k,N}$ in \eqref{pbsphfib} may be simplified using \cite[Lemma 3.1]{HuTh22_unpub}.
  \begin{prop}\label{pdident}
 There is a $\tau : S^{n-1} \to F(2n) $ such that one has the equivalence 
 \[E_{k,N} \simeq \GG_\tau(N)\# E_k,\]
 with $\GG_\tau(N)$ defined as in \eqref{Gtaudefn}. 
 \end{prop}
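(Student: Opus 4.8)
\emph{Proof proposal.} The plan is to deduce this from \cite[Lemma 3.1]{HuTh22_unpub}, applied to the sphere fibration $p\colon E_k\to M_k$ constructed in Theorems \ref{sphfibloc}, \ref{sc4mfld}, \ref{sphfib4} and \ref{sphfibgen}. Accordingly, the first thing to do is to check that this fibration supplies the input needed for that lemma: $M_k$ is an $(n-1)$-connected $2n$-dimensional Poincar\'{e} duality complex, $E_k\simeq\#^{k-1}(S^n\times S^{2n-1})$ is a Poincar\'{e} duality complex of dimension $3n-1$, the fibre is $S^{n-1}$, and $N$ is a $2n$-dimensional Poincar\'{e} duality complex; all of this is in place (if need be, in the localized category in which the fibration of Theorem \ref{sphfibgen} lives, which is harmless since the argument of \cite[Lemma 3.1]{HuTh22_unpub} is entirely homotopy-theoretic).

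For the mechanism, I would first decompose the base: excising an open top-cell disk writes $M_k=(M_k)_\circ\cup_{S^{2n-1}}\DD^{2n}$ with $(M_k)_\circ\simeq(S^n)^{\vee k}$, and $N=N_\circ\cup_{S^{2n-1}}\DD^{2n}$ with $N_\circ\simeq N_0$, so that $M_k\# N=(M_k)_\circ\cup_{S^{2n-1}}N_\circ$ and the quotient $q$ of \eqref{pbsphfib} is the identity on $(M_k)_\circ$ and the degree-one collapse $N_\circ\to\DD^{2n}$ on $N_\circ$. Restricting $p$, over the contractible disk it is fibre-homotopy-trivial, so with $\eta:=p^{-1}((M_k)_\circ)$ one has $E_k\simeq\eta\cup_{S^{2n-1}\times S^{n-1}}(\DD^{2n}\times S^{n-1})$. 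Pulling back along $q$ returns $\eta$ over $(M_k)_\circ$ and, over $N_\circ$ (which factors through the contractible disk), a fibre-homotopy-trivial piece $\simeq N_\circ\times S^{n-1}$; hence $E_{k,N}\simeq\eta\cup_{S^{2n-1}\times S^{n-1}}(N_\circ\times S^{n-1})$. In other words $E_{k,N}$ is obtained from $E_k$ by cutting out the standard piece $\DD^{2n}\times S^{n-1}$ and regluing $N_\circ\times S^{n-1}$ along the common boundary $S^{2n-1}\times S^{n-1}$. The content of \cite[Lemma 3.1]{HuTh22_unpub} is that this modification is precisely a connected sum with the twisted model $\GG_\tau(N)$ of \eqref{Gtaudefn}, the twist $\tau\colon S^{n-1}\to F(2n)$ being determined by the comparison of the two trivializations of $p^{-1}(S^{2n-1})$ in play (the one inherited from $\eta$ and the product one coming from the top cell), recorded in the $S^{n-1}$-coordinate as in the map $t(x,s)=(\tau(s)x,s)$.

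The main obstacle is exactly this last step: producing the correct $\tau$ and checking that $\GG_\tau(N)\# E_k$ is homotopy equivalent to $\eta\cup(N_\circ\times S^{n-1})$ — the twist is essential for the identification to hold, and pinning it down is the technical heart, which is what \cite[Lemma 3.1]{HuTh22_unpub} provides. A secondary point to record is that the reassembly also shows that $E_{k,N}$, a priori only a homotopy pullback, is in fact a Poincar\'{e} duality complex of the asserted form $\GG_\tau(N)\# E_k$.
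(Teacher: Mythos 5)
Your proposal takes exactly the paper's approach: the paper gives no written proof of Proposition~\ref{pdident} beyond the preceding sentence invoking \cite[Lemma~3.1]{HuTh22_unpub}, and you correctly identify that lemma as the content of the identification and supply a sensible excision/regluing picture ($E_{k,N}\simeq\eta\cup_{S^{2n-1}\times S^{n-1}}(N_0\times S^{n-1})$) to motivate it. One small imprecision worth flagging, though it does not create a gap since you explicitly defer the step to the cited lemma: comparing trivializations of $p^{-1}(S^{2n-1})$ naturally produces a clutching datum valued in $F(n)$, i.e.\ self-equivalences of the fibre $S^{n-1}$, whereas \eqref{Gtaudefn} asks for $\tau\colon S^{n-1}\to F(2n)$ acting on $S^{2n-1}$ — converting the one into the other is precisely the nontrivial content of the Huang--Theriault lemma, and the identification is less immediate than your phrasing might suggest.
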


 Looking towards the loop space, we build up to an analogue of Theorem \ref{loopdecMconN}. For this, we require the knowledge of the homotopy type of $\GG_\tau(N)-\ast$ for $\ast \in \GG_\tau(N)$. The following identification implies that this is independent of $\tau$ and the attaching map of the top cell. 
\begin{prop} \label{gtauminpt}
 Let $\tau : S^{n-1}\to F(2n)$ be a map. Then, we have a homotopy equivalence 
\[ \GG_\tau(N) -  * \simeq (N_{0}\rtimes S^{n-1}).\]
\end{prop}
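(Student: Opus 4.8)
The plan is to realise $\GG_\tau(N)-\ast$ by deleting a point $p$ lying in the interior of the unique $(3n-1)$-cell of $\GG_\tau(N)$, which in the cell structure coming from the pushout \eqref{Gtaudefn} is the product of the top cell of $S^{2n-1}$ with the top cell of $\DD^n=C(S^{n-1})$, sitting inside the summand $S^{2n-1}\times\DD^n$. The key feature of this choice is that $p$ lies off the gluing locus $i(S^{2n-1}\times S^{n-1})=S^{2n-1}\times\partial\DD^n$, so $\GG_\tau(N)-p$ is again the pushout
\[
N_0\times S^{n-1}\ \xleftarrow{(\phi,\pi_2)}\ S^{2n-1}\times S^{n-1}\ \hookrightarrow\ (S^{2n-1}\times\DD^n)-p .
\]
First I would deformation retract $(S^{2n-1}\times\DD^n)-p$ onto its $(3n-2)$-skeleton $K=(S^{2n-1}\times S^{n-1})\cup_{\{\ast\}\times S^{n-1}}(\{\ast\}\times\DD^n)$; such a retraction can be taken to fix $K$ pointwise, in particular to fix $S^{2n-1}\times S^{n-1}$, so it extends by the identity on $N_0\times S^{n-1}$ and shows that $\GG_\tau(N)-p$ is homotopy equivalent to the pushout of $N_0\times S^{n-1}\xleftarrow{(\phi,\pi_2)}S^{2n-1}\times S^{n-1}\hookrightarrow K$.

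In that pushout the copy of $S^{2n-1}\times S^{n-1}$ inside $K$ is glued onto its image in $N_0\times S^{n-1}$, so what survives is $N_0\times S^{n-1}$ with $\{\ast\}\times\DD^n$ attached along the restriction of $(\phi,\pi_2)$ to $\{\ast\}\times S^{n-1}$. The next step is to identify this restriction with the inclusion $\{\ast\}\times S^{n-1}\hookrightarrow N_0\times S^{n-1}$, i.e. to check that $\phi(\ast,s)=\ast$ for every $s$. Granting this, $\{\ast\}\times\DD^n=C(\{\ast\}\times S^{n-1})$ is the cone on a subcomplex of $N_0\times S^{n-1}$, and the standard equivalence $X\cup_A CA\simeq X/A$ for a cofibration $A\hookrightarrow X$ yields
\[
\GG_\tau(N)-p\ \simeq\ (N_0\times S^{n-1})\big/(\{\ast\}\times S^{n-1})\ =\ N_0\rtimes S^{n-1},
\]
as required.

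The only point needing genuine care — the main obstacle — is the identity $\phi(\ast,s)=\ast$. By definition of $\phi$ this reads $g(\tau(s)(\ast))=\ast$, where $g\colon S^{2n-1}\to N_0$ is the attaching map of the top cell of $N$, so it is enough to homotope $\tau$ so that each $\tau(s)$ fixes the basepoint of $S^{2n-1}$. This is harmless for the homotopy type of $\GG_\tau(N)$: the map $i$ in \eqref{Gtaudefn} is a cofibration, so that square is a homotopy pushout and depends on $\tau$ only through its homotopy class. And it is achievable: in the evaluation fibration $F_\ast(2n)\to F(2n)\xrightarrow{\mathrm{ev}}S^{2n-1}$ (with $F_\ast(2n)$ the basepoint-preserving self-equivalences), the composite $\mathrm{ev}\circ\tau\colon S^{n-1}\to S^{2n-1}$ is nullhomotopic because $\pi_{n-1}(S^{2n-1})=0$ for $n\ge 2$, so lifting a nullhomotopy deforms $\tau$ into $F_\ast(2n)$. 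Once this normalisation is in place, the remainder is routine bookkeeping with pushouts, CW skeleta, and the cone-collapse equivalence.
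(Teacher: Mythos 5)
Your proof is correct, and it follows a route that is recognizably close in spirit to the paper's but differs in the technical execution. The paper removes an explicit closed disk $S^{2n-1}_L\times\DD^n_{\leq 1/2}$ from the piece $S^{2n-1}\times\DD^n$, decomposes what is left via hemispheres and annuli, and stacks pushout squares to exhibit $\GG_\tau(N)-\ast$ as the homotopy cofibre of a map $S^{2n-1}_U\times S^{n-1}\to N_0\times S^{n-1}$; you instead delete a single point from the interior of the unique top cell, deformation retract onto the $(3n-2)$-skeleton $K$ of the product CW structure (fixing the gluing locus, so the retraction extends over all of $\GG_\tau(N)$), and then collapse the cone $\{\ast\}\times\DD^n$. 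Both reductions land on the same cofibre $S^{n-1}\to N_0\times S^{n-1}$, and your CW-skeleton approach arrives there with somewhat less bookkeeping than the hemisphere/annulus decomposition. The more substantial difference is that you isolate and justify the step the paper passes over with ``easily computed'': the map $S^{n-1}\to N_0\times S^{n-1}$ whose cofibre one takes has first coordinate $s\mapsto g(\tau(s)\ast)$, and one must know this is nullhomotopic (equivalently, that one may homotope $\tau$ into the basepoint-preserving equivalences $F_\ast(2n)$) in order to identify the cofibre with $(N_0\times S^{n-1})/(\ast\times S^{n-1})=N_0\rtimes S^{n-1}$. Your argument via the evaluation fibration $F_\ast(2n)\to F(2n)\to S^{2n-1}$ and $\pi_{n-1}(S^{2n-1})=0$, together with the observation that the pushout \eqref{Gtaudefn} is a homotopy pushout and hence insensitive to replacing $\tau$ by a homotopic map, supplies exactly the justification the paper's phrasing tacitly relies on. This is a genuine improvement in precision, not merely an alternate route.
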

\begin{proof}
   Consider the (homotopy) pushout square \eqref{Gtaudefn}.    Express $S^{n-1}= S^{n-1}_{U}\cup S^{n-1}_{L}$ as the union of the upper and lower hemispheres, and  the unit $n$-disk as the union of the disk of radius $1/2$ and the annulus as   $\DD^{n}= \DD^{n}_{\leq 1/2}\cup \DD^{n}_{\geq 1/2}.$ 
We may then write 
\[S^{2n-1}\times \DD^{n}= S^{2n-1}\times \DD^{n}_{\geq 1/2}\cup S^{2n-1}_{U}\times \DD^{n}_{\leq 1/2} \cup S^{2n-1}_{L}\times \DD^{n}_{\leq 1/2} .\]
Hence, 
\[(S^{2n-1}\times \DD^{n}) - \inter(S^{2n-1}_{L}\times \DD^{n}_{\leq 1/2}) \cong S^{2n-1}\times \DD^{n}_{\geq 1/2}\cup S^{2n-1}_{U}\times \DD^{n}_{\leq 1/2}.\]
  Note that $S^{2n-1}_{L}\times \DD^{n}_{\leq 1/2}\cong \DD^{3n-1}$. We set ${\GG_{\tau}(N)}_{0}= \GG_{\tau}(N) - \inter(S^{2n-1}_{L}\times \DD^{n}_{\leq 1/2})$ which is homotopy equivalent to $\GG_\tau(N)- \ast$. The pushout diagram \eqref{Gtaudefn} induces the following homotopy pushout 
\[ 
\xymatrix@C4.5pc{
 S^{2n-1}\times S^{n-1}\ar[r]^-{i}\ar[d]^{(\phi , \pi_{2})} & S^{2n-1}\times \DD^n_{\geq 1/2}\cup  S^{2n-1}_U \times \DD^n_{\leq 1/2} \ar[d]                      \\ 
N_{0}\times S^{n-1} \ar[r]^{} &  \GG_\tau(N)_{0},                             
}
\]
which in turn induces the follwing diagram 
\[ 
\xymatrix@C4.5pc{
 S^{2n-1}_U\times S^{n-1}\ar[r]^{i}\ar[d]^{(i\times I)} &   S^{2n-1}_U \times \DD^n_{\geq 1/2} \ar[d]                    \\ 
 S^{2n-1}\times S^{n-1}\ar[r]^-{i}\ar[d]^{(\phi , \pi_{2})} & S^{2n-1}\times \DD^n_{\geq 1/2}\cup  S^{2n-1}_U \times \DD^n_{\leq 1/2} \ar[d]                      \\ 
N_{0}\times S^{n-1} \ar[r]^{} &  \GG_\tau(N)_{0}   .        
}
\]
Note that the top square is a homotopy pushout square, and hence the outer square 
\[ 
\xymatrix@C4.5pc{
 S^{2n-1}_U\times S^{n-1} \ar[r]^{i}\ar[d]^{(\phi, \pi_{2})\circ (i\times I)} &  S^{2n-1}_U \times \DD^n_{\leq 1/2}\ar[d]                      \\ 
N_{0}\times S^{n-1} \ar[r]^{} &  {\GG_\tau(N)}_{0}                             
}
\]
is also a homotopy pushout square. As $ S^{2n-1}_U \times \DD^n_{\leq 1/2}$ is contractible, ${\GG_\tau(N)}_0$ is homotopy equivalent to the homotopy cofibre of the left vertical arrow, which is easily computed to be $N_0 \rtimes S^{n-1}$ .
\end{proof}

As a consequence of Proposition \ref{gtauminpt}, we obtain the following corollary using \cite[Theorem 1.4]{Th22_unpub}.
\begin{cor}\label{loopdecgtau}
  Suppose $E \in \PP \DD_1^{3n-1}$ in which the attaching map of the top cell is inert and $N\in \PP \DD_1^{2n}$. Then, for any $\tau: S^{n-1} \to F(2n)$, 
 \[\Omega (\GG_\tau(N)\# E)\simeq \Omega E \times \Omega (\Omega E \ltimes J )\]
  where $J=  (N_{0}\rtimes S^{n-1})$. 
\end{cor}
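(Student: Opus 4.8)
The plan is to recognize $\GG_\tau(N)\# E$ as a connected sum of Poincar\'{e} duality complexes of dimension $3n-1$ to which \cite[Theorem 1.4]{Th22_unpub} applies, and then to substitute the identification of $\GG_\tau(N)-\ast$ provided by Proposition \ref{gtauminpt}. First I would check that $\GG_\tau(N)$ is a simply connected Poincar\'{e} duality complex of dimension $3n-1$. The dimension and Poincar\'{e} duality are read off from the clutching description \eqref{Gtaudefn}, which exhibits $\GG_\tau(N)$ as the total space of an $S^{n-1}$-fibration over $N$, obtained by gluing $N_0\times S^{n-1}$ to $S^{2n-1}\times\DD^n$ along their common boundary $S^{2n-1}\times S^{n-1}$ twisted by $\tau$. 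Simple connectivity follows from van Kampen applied to the pushout \eqref{Gtaudefn}: $N_0$ is simply connected since $N\in\PP\DD_1^{2n}$, while the $\pi_1$ of the fibre factor is killed because that $S^{n-1}$ bounds the $\DD^n$-factor of $S^{2n-1}\times\DD^n$. Hence the connected sum $\GG_\tau(N)\# E$ is defined as in \cite{Wal67}, namely as the mapping cone of the sum of attaching maps $S^{3n-2}\to \GG_\tau(N)_0\vee E_0$.

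Next I would invoke \cite[Theorem 1.4]{Th22_unpub} exactly as it was used to establish Theorem \ref{loopdecMconN}, but now with $E$ in the role of the summand whose top-cell attaching map is inert (this is precisely our standing hypothesis on $E$) and $\GG_\tau(N)$ in the role of the arbitrary simply connected companion. This yields
\[ \Omega\big(\GG_\tau(N)\# E\big)\simeq \Omega E\times\Omega\big(\Omega E\ltimes \GG_\tau(N)_0\big), \]
where $\GG_\tau(N)_0\simeq\GG_\tau(N)-\ast$. Finally, Proposition \ref{gtauminpt} gives the homotopy equivalence $\GG_\tau(N)-\ast\simeq N_0\rtimes S^{n-1}=J$, and substituting this into the displayed decomposition produces $\Omega(\GG_\tau(N)\# E)\simeq\Omega E\times\Omega(\Omega E\ltimes J)$, which is the assertion.

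I expect the only genuinely delicate point to be the bookkeeping needed to legitimately invoke \cite[Theorem 1.4]{Th22_unpub} in dimension $3n-1$: one must confirm that $\GG_\tau(N)$ is a simply connected Poincar\'{e} duality complex of the correct dimension, so that the connected sum is well-defined and the cited theorem's hypotheses hold verbatim, and that inertness of the top cell of $E$ is exactly the input the theorem requires. Once these are in place, the remaining steps are formal and the corollary follows at once.
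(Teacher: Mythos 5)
Your proposal follows the paper's route exactly: the paper offers no separate proof of Corollary~\ref{loopdecgtau} beyond remarking that it follows from \cite[Theorem 1.4]{Th22_unpub} together with Proposition~\ref{gtauminpt}, and that is precisely what you do, with the extra (and reasonable) step of verifying that $\GG_\tau(N)$ is a simply connected Poincar\'e duality complex of dimension $3n-1$ so that the connected sum and the cited theorem make sense. One small imprecision: describing $\GG_\tau(N)$ as ``the total space of an $S^{n-1}$-fibration over $N$'' is not quite right, since the piece $S^{2n-1}\times\DD^n$ is not of the form $\DD^{2n}\times S^{n-1}$; but the actual content of your verification (two $(3n-1)$-manifolds-with-boundary glued along their common boundary $S^{2n-1}\times S^{n-1}$, plus van Kampen) is correct and is all that is needed.
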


\end{mysubsection}

\begin{mysubsection}{Loop space decompositions from spherical fibrations}\label{loopspdec}
We obtain loop space decompositions for $E_{k,N}$ using Corollary \ref{loopdecgtau} and Proposition \ref{pdident}. Note that the calculations of \cite{BaBa18} implies that the attaching map of the top cell of a connected sum of sphere products is inert. Therefore, we deduce the following result by applying Corollary \ref{loopdecgtau} for $E=E_k$.

\begin{prop}\label{decgtauconek}
Let $N\in \PP \DD_1^{2n}$. Then, for any $\tau: S^{2n-1} \to F(n)$,
\[ \Omega (\GG_\tau(N)\# E_k)\simeq \Omega E_k \times \Omega (\Omega E_k \ltimes (N_0 \rtimes S^{n-1}) ).\]
Further, one has the equivalence $\Omega E_k \simeq \Omega (S^{n}\times S^{2n-1})\times \Omega(J\vee J \wedge \Omega (S^{n}\times S^{2n-1}))$,  where $J\simeq (S^{n}\vee S^{2n-1})^{\vee k-2}.$
\end{prop}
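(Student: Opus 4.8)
The plan is to read the first equivalence off Corollary \ref{loopdecgtau} applied with $E = E_k$, and to obtain the second from the known loop space decomposition of a connected sum of sphere products, refined by a routine splitting of a half-smash product.

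First I would verify the hypotheses of Corollary \ref{loopdecgtau} for $E = E_k$. Since $E_k \simeq \#^{k-1}(S^n\times S^{2n-1})$ and $n$ is even (hence $n \geq 2$), $E_k$ is $(n-1)$-connected, so simply connected, and it satisfies Poincar\'e duality in dimension $n + (2n-1) = 3n-1$; thus $E_k \in \PP \DD_1^{3n-1}$. The attaching map of its top cell is inert: as recalled just before the proposition, the calculations of \cite{BaBa18} (see also \cite[Theorem B]{BaBa18}) show the top-cell attachment of a connected sum of sphere products to be inert. With $N \in \PP \DD_1^{2n}$ as given, Corollary \ref{loopdecgtau} then yields
\[\Omega(\GG_\tau(N)\# E_k) \simeq \Omega E_k \times \Omega\big(\Omega E_k \ltimes (N_0 \rtimes S^{n-1})\big)\]
for $\tau$ as in \eqref{Gtaudefn}, which is the first assertion.

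For the second equivalence I would use the loop space decomposition of a connected sum of sphere products (\cite[Theorem B]{BaBa18}; compare the decomposition of $\Omega M$ recorded after Theorem \ref{loopdecMconN}). Concretely, write $E_k = (S^n\times S^{2n-1})\,\#\,\#^{k-2}(S^n\times S^{2n-1})$; the top cell of the single factor $S^n\times S^{2n-1}$ is attached along $[\iota_n,\iota_{2n-1}]$, which is inert, so the inert cell attachment method of \cite{Th22_unpub} gives
\[\Omega E_k \simeq \Omega(S^n\times S^{2n-1}) \times \Omega\big(\Omega(S^n\times S^{2n-1}) \ltimes J\big), \qquad J = \big(\#^{k-2}(S^n\times S^{2n-1})\big) - \ast \simeq (S^n\vee S^{2n-1})^{\vee k-2}.\]
Since $J$ is a wedge of spheres, hence a co-H-space, the standard splitting of a half-smash with a co-H-space gives $\Omega(S^n\times S^{2n-1})\ltimes J \simeq J \vee (\Omega(S^n\times S^{2n-1})\wedge J) \simeq J \vee (J\wedge \Omega(S^n\times S^{2n-1}))$, and substituting this into the previous display produces the stated formula. (For $k=2$ one has $J \simeq \ast$ and the formula degenerates to $\Omega E_2 \simeq \Omega(S^n\times S^{2n-1})$, consistent with $E_2 = S^n\times S^{2n-1}$.)

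I do not expect a genuine obstacle here; the proposition is an assembly of Corollary \ref{loopdecgtau}, the inertness results from \cite{BaBa18}, and the half-smash splitting. The points that deserve a little care are checking that $E_k$ is a simply connected $(3n-1)$-dimensional Poincar\'e duality complex whose top cell is inert, so that Corollary \ref{loopdecgtau} genuinely applies, and observing that it is the co-H-structure on $J$ that licenses the passage from the $\ltimes$-form to the $\vee$-form in the formula for $\Omega E_k$.
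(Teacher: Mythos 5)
Your proof is correct and takes essentially the same route as the paper's own (largely implicit) argument: the first equivalence is read directly off Corollary \ref{loopdecgtau} with $E=E_k$ after noting (as the paper does, citing \cite{BaBa18}) that the top cell of a connected sum of sphere products is inert, and the second equivalence comes from the same inert-attachment machinery of \cite{Th22_unpub} applied to the splitting $E_k=(S^n\times S^{2n-1})\#\,\#^{k-2}(S^n\times S^{2n-1})$ together with the half-smash splitting $\Omega(S^n\times S^{2n-1})\ltimes J\simeq J\vee(J\wedge\Omega(S^n\times S^{2n-1}))$ for the wedge of spheres $J$. One small point worth flagging: the statement of Proposition \ref{decgtauconek} writes $\tau:S^{2n-1}\to F(n)$, but from \eqref{Gtaudefn} and Proposition \ref{pdident} it should read $\tau:S^{n-1}\to F(2n)$; this is a typographical slip in the paper, not an issue with your argument.
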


\begin{theorem}\label{loopspdecconnsum}
   Suppose $M_k\in \PP \DD_{n-1}^{2n}$  with $H_{n}M_k\cong \Z^k$ and $k\geq 2$, and $N\in \PP \DD_1^{2n}$. Let $E_k= \#^{k-1}(S^{n}\times S^{2n-1})$ for an even integer $n$.  Then, we have the homotopy equivalence 
   $$\Omega(N\# M_k)\simeq S^{n-1}\times  \Omega E_k \times \Omega (\Omega E_k \ltimes Y )$$
    where  $Y\simeq (N_{0}\rtimes S^{n-1}) , $ after inverting the primes in $T_n$ (that is, those occuring in the torsion part of $\pi_{2n-1}(S^n)$ together with the prime $2$).
\end{theorem}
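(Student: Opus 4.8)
The plan is to combine the spherical fibration $S^{n-1}\to E_k \to M_k$ of Theorem \ref{sphfibloc} with the connected-sum techniques from \S\ref{appln}, specifically Proposition \ref{pdident} and Corollary \ref{loopdecgtau}. First I would pull back the fibration $E_k\to M_k$ along the quotient map $q\colon N\# M_k \to M_k$ to obtain a spherical fibration $S^{n-1}\to E_{k,N}\to N\# M_k$ as in \eqref{pbsphfib}. Since $N\# M_k$ is simply connected and $S^{n-1}$ maps into $E_{k,N}$ null-homotopically (as $E_{k,N}$ is $(n-1)$-connected, being built from $E_k$ which is $(n-1)$-connected and $N_0$ which is simply connected), continuing the fibration sequence $\Omega E_{k,N}\to \Omega(N\# M_k)\to S^{n-1}$ gives a principal fibration with a section, whence the splitting
\[ \Omega(N\# M_k)\simeq S^{n-1}\times \Omega E_{k,N}.\]
This is the same observation used in the introduction to produce the $S^{n-1}$-summand of $\pi_\ast M_k$, and it is available here after inverting $T_n$ because $\Omega S^n\simeq S^{n-1}\times \Omega S^{2n-1}$ in that localized category.

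Next I would identify $\Omega E_{k,N}$. By Proposition \ref{pdident}, there is a map $\tau\colon S^{n-1}\to F(2n)$ with $E_{k,N}\simeq \GG_\tau(N)\# E_k$. The space $E_k=\#^{k-1}(S^n\times S^{2n-1})$ is a connected sum of sphere products, so by \cite[Theorem B]{BaBa18} (cited in \S\ref{loopspdec}) the attaching map of its top cell is inert; moreover $E_k\in \PP\DD_1^{3n-1}$ and $N\in \PP\DD_1^{2n}$, so Corollary \ref{loopdecgtau} applies with $E=E_k$ and yields
\[ \Omega(\GG_\tau(N)\# E_k)\simeq \Omega E_k\times \Omega\big(\Omega E_k\ltimes (N_0\rtimes S^{n-1})\big).\]
Here Proposition \ref{gtauminpt} is the key input that makes $\GG_\tau(N)-\ast \simeq N_0\rtimes S^{n-1}$, so the right-hand side is genuinely independent of $\tau$ and of the attaching map of the top cell of $N$. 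Combining the two displays gives exactly
\[ \Omega(N\# M_k)\simeq S^{n-1}\times \Omega E_k\times \Omega\big(\Omega E_k\ltimes (N_0\rtimes S^{n-1})\big),\]
which is the assertion with $Y\simeq N_0\rtimes S^{n-1}$.

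The main obstacle I anticipate is the bookkeeping needed to legitimately invoke Corollary \ref{loopdecgtau}: one must check that $\GG_\tau(N)\# E_k$ really is a simply connected Poincar\'e duality complex of the right dimension whose top-cell attaching map is inert, and — more delicately — that Proposition \ref{pdident} identifies $E_{k,N}$ with a connected sum of the form $\GG_\tau(N)\# E_k$ rather than some twisted object, so that the inertness of the $E_k$-part is not disturbed by the $\GG_\tau(N)$-summand. This is exactly the content imported from \cite{HuTh22_unpub}, and the argument of \cite{Th22_unpub} for transferring loop space decompositions across connected sums handles precisely this situation, since inertness of the top cell of one summand suffices. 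A secondary point is to confirm that all the equivalences above are compatible with the localization at $T_n$: the fibration of Theorem \ref{sphfibloc} lives in that localized category, and the decompositions of \cite{BaBa18,BeTh14} are stated integrally, so one simply applies them after localization, which is harmless as localization preserves the relevant (co)fibration and product structures.
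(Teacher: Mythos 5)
Your proposal takes the same route as the paper: pull back the fibration of Theorem~\ref{sphfibloc} along $q\colon N\#M_k\to M_k$, split off the $S^{n-1}$-factor from the looped fibration sequence, identify $E_{k,N}\simeq\GG_\tau(N)\#E_k$ via Proposition~\ref{pdident}, and apply Proposition~\ref{decgtauconek} (equivalently Corollary~\ref{loopdecgtau} with $E=E_k$, using inertness of the top cell of $E_k$ from \cite{BaBa18}). The paper's own proof is a one-line citation to exactly these three results, so the structure matches.

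There is, however, one genuine slip in your justification of the splitting $\Omega(N\#M_k)\simeq S^{n-1}\times\Omega E_{k,N}$. You assert that $E_{k,N}$ is $(n-1)$-connected ``being built from $E_k$ which is $(n-1)$-connected and $N_0$ which is simply connected''; this is false for $n>2$. Indeed, $E_{k,N}\simeq\GG_\tau(N)\#E_k$, and $\GG_\tau(N)$ contains $N_0\times S^{n-1}$, so from the fibration long exact sequence one gets $\pi_2(E_{k,N})\cong\pi_2(N\#M_k)\cong H_2(N)$ for $n\geq 4$, which need not vanish since $N$ is only assumed simply connected. What you actually need is the weaker fact that $\pi_{n-1}(E_{k,N})=0$, and the right way to see this is by comparing the long exact homotopy sequences of the pullback square \eqref{pbsphfib}: the connecting map $\partial'\colon\pi_n(N\#M_k)\to\pi_{n-1}(S^{n-1})$ factors as $\partial\circ q_\ast$, where $\partial\colon\pi_n(M_k)\to\pi_{n-1}(S^{n-1})$ is surjective (because $\pi_{n-1}(E_k)=0$, $E_k$ being genuinely $(n-1)$-connected), and $q_\ast\colon\pi_n(N\#M_k)\to\pi_n(M_k)$ is surjective because the $n$-skeleton $(S^n)^{\vee k}$ of $M_k$ lifts through $q$. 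Hence $\partial'$ is onto, $\pi_{n-1}(E_{k,N})=0$, the fibre inclusion $S^{n-1}\to E_{k,N}$ is null-homotopic, and the splitting holds. With this correction the rest of your argument is exactly the paper's.
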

\begin{proof}
    The proof follows directly from Theorem \ref{sphfibloc} and Propositions \ref{decgtauconek} and \ref{pdident}. 
 
\end{proof}

\begin{theorem}\label{connloopdechigh}
   Suppose $M_k \in \PP \DD_{n-1}^{2n}$ with rank ($H_{n}M_k)= k > r,$ where $r= \#$cyclic torsion summands in $\pi_{n-1}^s$, and $N\in \PP\DD_1^{2n}$ .  Then after inverting   $2$,  we have the homotopy equivalence 
$$\Omega(N\# M_k)\simeq S^{n-1}\times  \Omega E_k \times \Omega (\Omega E_k \ltimes Y )$$ 
where  $Y\simeq (N_{0}\rtimes S^{n-1})$. 
\end{theorem}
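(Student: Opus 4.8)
The strategy is to rerun the proof of Theorem \ref{loopspdecconnsum}, feeding in Theorem \ref{sphfibgen} in place of Theorem \ref{sphfibloc} so that only the prime $2$ gets inverted. First I would invoke Theorem \ref{sphfibgen}: since $k>r$ (for $n>8$; the even $n\le 8$ reduce to the stronger low-dimensional results of \S\ref{lowdim}, where again only $2$ need be inverted once $k>r$), there is, after inverting $2$, a fibration $S^{n-1}\to E_k\to M_k$ with $E_k\simeq \#^{k-1}(S^n\times S^{2n-1})$. Pulling this fibration back along the collapse map $q:N\#M_k\to M_k$ exactly as in diagram \eqref{pbsphfib} yields a fibration $S^{n-1}\to E_{k,N}\to N\#M_k$, and Proposition \ref{pdident} identifies the total space as $E_{k,N}\simeq \GG_\tau(N)\#E_k$ for a suitable $\tau:S^{n-1}\to F(2n)$.

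Next, the attaching map of the top cell of $E_k=\#^{k-1}(S^n\times S^{2n-1})$ is inert by the calculations of \cite{BaBa18}, so Corollary \ref{loopdecgtau} (equivalently Proposition \ref{decgtauconek}, taking $E=E_k\in\PP\DD_1^{3n-1}$ and $N\in\PP\DD_1^{2n}$) applies and gives
\[ \Omega E_{k,N}\simeq \Omega\big(\GG_\tau(N)\#E_k\big)\simeq \Omega E_k\times\Omega\big(\Omega E_k\ltimes (N_0\rtimes S^{n-1})\big)=\Omega E_k\times\Omega(\Omega E_k\ltimes Y), \]
where $Y\simeq N_0\rtimes S^{n-1}$. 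It then remains to split off the fibre $S^{n-1}$. Looping the fibration $S^{n-1}\to E_{k,N}\to N\#M_k$ exhibits $\Omega E_{k,N}$ as the homotopy fibre of a principal connecting map $\partial':\Omega(N\#M_k)\to S^{n-1}$, and by naturality of the connecting map under pullback one has $\partial'=\partial\circ\Omega q$, where $\partial:\Omega M_k\to S^{n-1}$ is the connecting map of the fibration over $M_k$. By the construction underlying Theorem \ref{sphfibgen} (cf.\ the discussion in the introduction and in Theorem \ref{sphfibpbk}), $\partial$ has a section $s:S^{n-1}\to\Omega M_k$, namely the $S^{n-1}$-factor of the splitting $\Omega M_k\simeq\Omega E_k\times S^{n-1}$, and $s$ is the loop-adjoint of a map $S^n\to M_k$; since $M_k$ has no cells in dimensions $n+1,\dots,2n-1$, cellular approximation lets this map factor through the $n$-skeleton $(S^n)^{\vee k}$. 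As $(S^n)^{\vee k}$ sits inside the bottom skeleton $(S^n)^{\vee k}\vee N_0$ of $N\#M_k$ and $q$ restricts there to the standard inclusion $(S^n)^{\vee k}\hookrightarrow M_k$, this map lifts along $q$, and hence $s$ lifts through $\Omega q$ to a section $\hat s$ of $\partial'$. A principal fibration with a section is trivial, so $\Omega(N\#M_k)\simeq S^{n-1}\times\Omega E_{k,N}\simeq S^{n-1}\times\Omega E_k\times\Omega(\Omega E_k\ltimes Y)$, which is the asserted decomposition.

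The only step that requires genuine attention is the last one — checking that the $S^{n-1}$ summand still splits off after replacing $M_k$ by the connected sum $N\#M_k$ — and it goes through precisely because the section of $\partial$ is represented by a map $S^n\to M_k$ supported on the $n$-skeleton, which therefore lifts along the collapse $q$; this is the same mechanism used implicitly in Theorem \ref{loopspdecconnsum}. Everything else is mechanical: Proposition \ref{pdident} supplies the total space of the pulled-back fibration, \cite{BaBa18} supplies inertness of the top cell of $E_k$, and Corollary \ref{loopdecgtau}/Proposition \ref{decgtauconek} supply the decomposition of $\Omega(\GG_\tau(N)\#E_k)$.
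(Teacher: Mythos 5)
Your proof is correct and takes essentially the same route as the paper's terse proof — pull the fibration of Theorem \ref{sphfibgen} back along the collapse map $q$, identify the total space via Proposition \ref{pdident}, decompose its loop space via Proposition \ref{decgtauconek}, and split off $S^{n-1}$ using a section of the connecting map, which you correctly observe lifts through $\Omega q$ because it is the loop adjoint of a map $S^n\to M_k$ that factors (by cellular approximation) through the $n$-skeleton $(S^n)^{\vee k}$, which sits inside $N\#M_k$. The only imprecision is the parenthetical assertion that $n\le 8$ reduces to \S\ref{lowdim} with only $2$ inverted: this holds for $n=2,4,6$ but not for $n=8$, where the paper has no integral or $\Z[1/2]$ fibration; since Theorem \ref{sphfibgen} itself excludes $n\le 8$, the statement of Theorem \ref{connloopdechigh} should be read with the implicit hypothesis $n>8$ (the cases $n=2,4$ being handled separately by Theorem \ref{connloopdec24}).
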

\begin{proof}
     The proof follows directly from the Theorem \ref{sphfibgen} and  Propositions \ref{decgtauconek} and \ref{pdident}. 
\end{proof}

For $n=2,~4$, we do not have to invert $2$, and we have
\begin{theorem}\label{connloopdec24}
   Suppose $M_k \in \PP \DD_{n-1}^{2n}$ with rank ($H_{n}M_k)= k \geq 2,$ and $n\in \{2,4\}$, and $N\in \PP\DD_1^{2n}$ .  Then,  we have the homotopy equivalence 
$$\Omega(N\# M_k)\simeq S^{n-1}\times  \Omega E_k \times \Omega (\Omega E_k \ltimes Y )$$ 
where  $Y\simeq (N_{0}\rtimes S^{n-1})$. 
\end{theorem}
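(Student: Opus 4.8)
The plan is to reduce Theorem \ref{connloopdec24} to the same scheme used for the previous two theorems, the only difference being the input spherical fibration. Concretely, the proof should read: ``The proof follows directly from the Theorem \ref{sc4mfld} (for $n=2$) and Theorem \ref{sphfib4} (for $n=4$), together with Propositions \ref{decgtauconek} and \ref{pdident}.'' So the first step is to invoke the integral sphere fibrations $S^{n-1}\to E_k\to M_k$ with $E_k\simeq \#^{k-1}(S^n\times S^{2n-1})$, which are available without inverting any primes precisely when $n=2$ or $n=4$; these are Theorems \ref{sc4mfld} and \ref{sphfib4}.

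Second, I would feed this fibration into the pullback construction \eqref{pbsphfib} over the quotient $q\colon M_k\# N\to M_k$ and apply Proposition \ref{pdident} to identify the total space of the pulled-back fibration as $E_{k,N}\simeq \GG_\tau(N)\#E_k$ for a suitable $\tau\colon S^{n-1}\to F(2n)$. Third, since the attaching map of the top cell of a connected sum of sphere products is inert by the calculations of \cite{BaBa18}, Corollary \ref{loopdecgtau} (equivalently Proposition \ref{decgtauconek}) applies to $E=E_k$ and gives
\[ \Omega(\GG_\tau(N)\#E_k)\simeq \Omega E_k\times \Omega(\Omega E_k\ltimes(N_0\rtimes S^{n-1})).\]
Finally, the fibration $S^{n-1}\to E_{k,N}\to M_k\#N$ has a section after looping (because $S^{n-1}\to E_{k,N}$ is null-homotopic, $E_{k,N}$ being $(n-1)$-connected, exactly as in the discussion preceding Theorem A), so $\Omega(N\#M_k)\simeq S^{n-1}\times\Omega E_{k,N}$; combining with the previous display yields the stated decomposition with $Y\simeq N_0\rtimes S^{n-1}$.

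There is essentially no obstacle here: every ingredient has already been established, and the content of Theorem \ref{connloopdec24} over Theorems \ref{loopspdecconnsum} and \ref{connloopdechigh} is purely that for $n\in\{2,4\}$ one may use the integral fibrations of \S\ref{lowdim} rather than the localized ones, so no primes need to be inverted. The only point worth a sentence of care is checking that Proposition \ref{decgtauconek} is stated for arbitrary even $n$ (it is) and that $n=2,4$ are not excluded anywhere in the chain of references — in particular the identification $\Omega E_k\simeq \Omega(S^n\times S^{2n-1})\times\Omega(J\vee J\wedge\Omega(S^n\times S^{2n-1}))$ with $J\simeq(S^n\vee S^{2n-1})^{\vee k-2}$ from \cite{BeTh14, BaBa18} holds for all $k\geq 2$ and all even $n$.

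\begin{proof}
The proof follows directly from Theorem \ref{sc4mfld} (in the case $n=2$) and Theorem \ref{sphfib4} (in the case $n=4$), together with Propositions \ref{decgtauconek} and \ref{pdident}. Indeed, Theorems \ref{sc4mfld} and \ref{sphfib4} provide an integral sphere fibration $S^{n-1}\to E_k\to M_k$ with $E_k\simeq \#^{k-1}(S^n\times S^{2n-1})$, with no primes inverted, when $n\in\{2,4\}$. Pulling this fibration back along the quotient $q\colon N\# M_k\to M_k$ as in \eqref{pbsphfib} and applying Proposition \ref{pdident}, the total space of the pulled-back fibration is $E_{k,N}\simeq \GG_\tau(N)\# E_k$ for a suitable $\tau\colon S^{n-1}\to F(2n)$. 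Since $E_{k,N}$ is $(n-1)$-connected, the map $S^{n-1}\to E_{k,N}$ is null-homotopic, so the looped fibration $\Omega E_{k,N}\to \Omega(N\# M_k)\to S^{n-1}$ is a principal fibration admitting a section, whence $\Omega(N\# M_k)\simeq S^{n-1}\times \Omega E_{k,N}$. Finally, by Proposition \ref{decgtauconek} applied with $E=E_k$,
\[ \Omega E_{k,N}\simeq \Omega(\GG_\tau(N)\# E_k)\simeq \Omega E_k\times \Omega\big(\Omega E_k\ltimes (N_0\rtimes S^{n-1})\big),\]
which gives the asserted homotopy equivalence with $Y\simeq N_0\rtimes S^{n-1}$.
\end{proof}
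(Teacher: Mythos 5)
Your proof follows the same chain of references the paper implicitly has in mind (Theorems \ref{sc4mfld} and \ref{sphfib4} to supply the integral sphere fibration, then Propositions \ref{pdident} and \ref{decgtauconek} to compute $\Omega E_{k,N}$), so structurally it matches. However, there is a genuine gap in the step where you split off the $S^{n-1}$ factor. You assert that $E_{k,N}$ is $(n-1)$-connected and deduce from this that the fibre inclusion $S^{n-1}\to E_{k,N}$ is null-homotopic. This claim is false for $n=4$: from the long exact homotopy sequence of $S^3\to E_{k,N}\to N\#M_k$ one sees $\pi_2(E_{k,N})\cong\pi_2(N\#M_k)\cong H_2(N)$, which is generally nonzero since $N$ is only assumed to lie in $\PP\DD_1^{8}$ (take $N=\C P^4$ or $S^2\times S^6$). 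So $E_{k,N}$ is not $3$-connected, and the justification collapses. Even for $n=2$, showing that $E_{k,N}$ is simply connected requires the same transgression argument that directly shows the fibre inclusion is null, so the connectivity claim is not doing independent work.

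The conclusion you want is still true, but the argument must pass through $E_k$ rather than $E_{k,N}$. In the exact sequence
\[
\pi_n(N\#M_k)\xrightarrow{\ \partial\ }\pi_{n-1}(S^{n-1})\longrightarrow \pi_{n-1}(E_{k,N}),
\]
the connecting map $\partial$ factors as $\partial_0\circ q_*$, where $\partial_0\colon\pi_n(M_k)\to\pi_{n-1}(S^{n-1})$ is the connecting map of the fibration $S^{n-1}\to E_k\to M_k$ and $q_*\colon\pi_n(N\#M_k)\to\pi_n(M_k)$ is induced by the collapse. The map $\partial_0$ is surjective precisely because $E_k$ is $(n-1)$-connected (this is where that hypothesis is actually used), and $q_*$ is surjective because the $(2n-1)$-skeleton of $N\#M_k$ is $N_0\vee(S^n)^{\vee k}$, on which $q$ is the pinch map with an evident section, while $\pi_n$ is detected on the $(2n-1)$-skeleton. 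Hence $\partial$ is surjective, the class of the fibre inclusion dies in $\pi_{n-1}(E_{k,N})$, and $\Omega(N\#M_k)\simeq S^{n-1}\times\Omega E_{k,N}$ as you wanted. Equivalently, one can take the section $s\colon S^{n-1}\to\Omega M_k$ of $\partial_0$ and compose with a right homotopy inverse of $\Omega q$, which exists by Proposition \ref{inerthigh}; either correction closes the gap, and the rest of your argument then goes through unchanged.
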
 
\end{mysubsection}

\begin{mysubsection}{Decomposition of looped configuration spaces} \label{loopconfig}
The loop space decomposition of a space has many applications, one of them being in the case of configuration spaces. We note this down in the examples treated above. Recall that the ordered configration space of $X$ is given by $F_{k}(X)= \{(x_{1}, \dots, x_{k})\mid x_{i}\neq x_{j}$ if $i\neq j\}.$
\begin{defn}\cite{CoGi02}
  Let $\pi \colon F_{k}(M) \rightarrow M$ be the projection onto the first factor. The space $M$ is said to be a $\sigma_{k}$-manifold if $\pi$ admits a  cross section.
\end{defn}
If $M$ is a $\sigma_k$-manifold, \cite[Theorem 2.1]{CoGi02} implies the homotopy equivalence 
\begin{myeq}\label{configdecomp}
\Omega F_{k}(M)\simeq \Omega M \times \Omega (M-Q_{1}) \times \dots \times \Omega (M-Q_{k}),
\end{myeq}
for any choice of $k$ distinct points $q_{1},\cdots , q_{k}$ of $M$ with $Q_{i}= \{q_{1},\cdots , q_{i}\}$. The hypothesis of being a $\sigma_k$-manifold is satisfied if $M$ has a nowhere vanishing vector field \cite[Theorem 5]{FaNe62}. By the Poincar\'{e}-Hopf index theorem, this is satisfied if the Euler characteristic $\chi(M)=0$. Putting all this together we obtain the following result for $N\# M$ where $M$ is a $(n-1)$-connected $2n$-manifold. 
\begin{theorem}\label{configloopdec}
    Suppose $M$ is an $(n-1)$-connected $2n$-manifold for $n$ even such that $\Rank(H_{n}(M))=r\geq 2$, and $N$ is a simply connected $2n$-manifold  with $\chi(N)= -r$. Then, after inverting the primes in $T_n$, the homotopy type of $\Omega F_{k}(N\#M)$ depends only on the homotopy type of $N-*$ and the integer $r$. More precisely we have the decomposition 
   $$\Omega F_{k}(N\#M)\simeq S^{n-1}\times  \Omega E \times \Omega (\Omega E \ltimes Y ) \times \Omega (N_{0}\vee (S^{n})^{\vee_{r}})\times \prod_{i=1}^{k-1} \Omega (N_{0}\vee (S^{n})^{\vee_{r}}\vee (S^{2n-1})^{\vee_{i}})$$
   in which     $Y\simeq (N_{0}\rtimes S^{n-1}) , E= \#^{r-1}(S^{n}\times S^{2n-1})$.
   \end{theorem}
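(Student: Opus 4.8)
The plan is to assemble the decomposition from three ingredients already established: the loop space splitting of $N\#M$ coming from the sphere fibration (Theorem \ref{loopspdecconnsum}, resp.\ \ref{connloopdechigh}/\ref{connloopdec24}), the configuration space splitting \eqref{configdecomp} for a $\sigma_k$-manifold, and the identification of the homotopy type of $(N\#M)-Q_i$ for a finite set $Q_i$. First I would verify the hypothesis of \eqref{configdecomp}: since $N$ is simply connected, $\chi(N)=-r<0$ when $r\geq 2$, so $\chi(N\#M)=\chi(N)+\chi(M)-2$; using that $M$ is an $(n-1)$-connected $2n$-manifold with $\Rank(H_n(M))=r$ and $n$ even one gets $\chi(M)=2+r$, hence $\chi(N\#M)=0$. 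By the Poincar\'e--Hopf theorem $N\#M$ has a nowhere vanishing vector field, so by \cite[Theorem 5]{FaNe62} it is a $\sigma_k$-manifold and \eqref{configdecomp} applies with $M$ replaced by $N\#M$.

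Next I would compute the two kinds of factors appearing in \eqref{configdecomp}. The factor $\Omega(N\#M)$ is handled directly by Theorem \ref{loopspdecconnsum}: after inverting $T_n$ it is $S^{n-1}\times \Omega E\times \Omega(\Omega E\ltimes Y)$ with $E=\#^{r-1}(S^n\times S^{2n-1})$ and $Y\simeq N_0\rtimes S^{n-1}$. For the factors $\Omega\big((N\#M)-Q_i\big)$, the key observation is that removing $i$ points from the $2n$-manifold $N\#M$ is, up to homotopy, the same as taking the $(2n-1)$-skeleton and wedging on $(i-1)$ copies of $S^{2n-1}$ — more precisely, $(N\#M)-\{q_1\}$ is homotopy equivalent to $(N\#M)_0 = N_0\vee M_0 \simeq N_0\vee (S^n)^{\vee r}$ (since $M_0\simeq (S^n)^{\vee r}$ for $M\in\PP\DD_{n-1}^{2n}$), and each further puncture adds a $S^{2n-1}$ summand, giving $(N\#M)-Q_i \simeq N_0\vee (S^n)^{\vee r}\vee (S^{2n-1})^{\vee(i-1)}$ for $1\le i\le k-1$ after reindexing. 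Substituting these into \eqref{configdecomp} yields exactly the stated product; note the index shift so that the $i$-th term in the displayed product over $i=1,\dots,k-1$ has $i$ copies of $S^{2n-1}$ and the lone factor $\Omega(N_0\vee (S^n)^{\vee r})$ accounts for the single-puncture term.

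The main obstacle I expect is the careful bookkeeping in the second step: identifying $(N\#M)-Q_i$ with a wedge of spheres requires knowing that deleting a point from a closed $2n$-manifold produces (up to homotopy) the complement of a top cell, i.e.\ a CW complex of dimension $\le 2n-1$, and then tracking how the attaching map of the top cell becomes inessential after the puncture so that the $(2n-1)$-skeleton really is $N_0\vee(S^n)^{\vee r}$; the extra punctures then contribute $(2n-1)$-spheres linking the removed points. This is the analogue of the classical fact that $M_0\simeq M-\ast$ for a Poincar\'e duality complex. Once this identification is in hand the theorem follows by direct substitution, and the statement that the homotopy type depends only on $N-\ast$ and $r$ is immediate since $E$, $Y$, and all the wedge summands are built solely from $N_0$, $r$, and spheres.
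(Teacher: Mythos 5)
Your proposal is correct and follows essentially the same route as the paper's proof: verify $\chi(N\#M)=0$ to invoke \eqref{configdecomp}, feed in Theorem \ref{loopspdecconnsum} for the first factor, and identify the punctured connected sums $N\#M-Q_i$ with $N_0\vee(S^n)^{\vee r}\vee(S^{2n-1})^{\vee(i-1)}$. The ``obstacle'' you flag (that a punctured closed manifold is homotopy equivalent to the complement of the top cell, i.e.\ the minimal $(2n-1)$-skeleton) is a standard fact that the paper also simply asserts, so your argument and the paper's are in agreement.
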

   \begin{proof}
The hypothesis $\chi(N)=-r$ implies $\chi(M\# N)=0$. Thus, \eqref{configdecomp} applies to give 
$$\Omega F_{k}(N\#M)\simeq \Omega (N\#M) \times \Omega (N\#M -Q_{1}) \times \cdots \times \Omega (N\# M -Q_{k}) $$ 
We now have the equivalence 
\[\Omega (N\# M) \simeq S^{n-1}\times  \Omega E \times \Omega (\Omega E \ltimes Y )\]
by Theorem \ref{loopspdecconnsum}. The other factors in the product are observed via the equivalences 
\[N\# M - Q_i \simeq (N\#M - \mbox{pt}) \vee (S^{2n-1})^{\vee i-1} \simeq N_0 \vee (S^n)^{\vee r} \vee (S^{2n-1})^{\vee i-1}\]
for $i\geq 1$.
   \end{proof}
  Finally, one may observe that slightly more general versions of Theorem \ref{configloopdec} are provable using Theorems \ref{connloopdechigh} and \ref{connloopdec24}.
   \end{mysubsection}

\end{document}